\newcommand{\CC}{\ensuremath{\mathbb{C}}}
\newcommand{\PP}{\ensuremath{\mathbb{P}}}
\newcommand{\QQ}{\ensuremath{\mathbb{Q}}}
\newcommand{\RR}{\ensuremath{\mathbb{R}}}
\newcommand{\ZZ}{\ensuremath{\mathbb{Z}}}
\newcommand{\Spec}{\mathrm{Spec}\,}
\newcommand{\cO}{\mathcal{O}}
\newcommand{\Fuk}{\text{\sf Fuk}}
\DeclareMathOperator{\Proj}{\mathbf{Proj}}
\spnewtheorem{teo}{Theorem}[section]{\bf}{\it}
\spnewtheorem{conj}[teo]{Conjecture}{\bf}{\it}
\spnewtheorem{prop}[teo]{Proposition}{\bf}{\it}
\spnewtheorem{df}[teo]{Definition}{\bf}{\it}
\spnewtheorem{lem}[teo]{Lemma}{\bf}{\it}
\spnewtheorem{cor}[teo]{Corollary}{\bf}{\it}
\spnewtheorem{assertionp}[teo]{Assertion}{\bf}{\it}
\spnewtheorem{conclusionsp}[teo]{Conclusion}{\bf}{\rm}
\spnewtheorem{assumptionp}[teo]{Assumption}{\bf}{\rm}
\spnewtheorem{exs}[teo]{Examples}{\bf}{\rm}
\spnewtheorem{ex}[teo]{Example}{\bf}{\rm}
\spnewtheorem{ques}[teo]{Question}{\bf}{\rm}
\spnewtheorem{oss}[teo]{Remark}{\bf}{\rm}
\spnewtheorem{rmks}[teo]{Remarks}{\bf}{\rm}
\spnewtheorem{nota}[teo]{Notations}{\bf}{\rm}
\spnewtheorem{disc}[teo]{Discussion}{\bf}{\rm}
\spnewtheorem{chal}[teo]{Challenge}{\bf}{\rm}
\newlength{\nanowidth} \setlength{\nanowidth}{.33\textwidth}
\newcommand{\SarSP}{$\mathrm{II_p}$}
\newcommand{\SarDP}{$\mathrm{II_{dp}}$}
\newcommand{\SarODP}{$\mathrm{II_o}$}
\newcommand{\SarL}{$\mathrm{II_l}$}
\newcommand{\SarC}{$\mathrm{II_c}$}
\newcommand{\SarCDV}{$\mathrm{II_{cDV}}$}
\sloppy \pagestyle{plain}\binoppenalty=10000 \relpenalty=10000
\begin{document}
\setcounter{page}{1}

\title*{Birational geometry via moduli spaces}
\titlerunning{Birational geometry via moduli spaces}

\author{Ivan Cheltsov, Ludmil Katzarkov, Victor Przyjalkowski}

\author{Ivan Cheltsov, Ludmil Katzarkov, and Victor Przyjalkowski}
\authorrunning{I.\,Cheltsov, L.\,Katzarkov, V.\,Przyjalkowski}

\institute{
Ivan Cheltsov:
University of Edinburgh.\\
Ludmil Katzarkov:
University of Vienna.\\
Victor Przyjalkowski:
Steklov Mathematical Institute.\\%, 8 Gubkina street, Moscow 119991, Russia
\texttt{I.Cheltsov@ed.ac.uk},
\texttt{lkatzark@math.uci.edu},
\texttt{victorprz@mi.ras.ru}} %, victorprz@gmail.com}}

\maketitle

%\thanks{I.\,C. is grateful to Hausdorff Research Institute of Mathematics (Bonn) and was funded by
%RFFI grants 11-01-00336-a and AG Laboratory GU-HSE, RF government
%grant, ag. 11 11.G34.31.0023, L.\,K was funded by grants NSF DMS0600800, NSF FRG DMS-0652633, NSF FRG DMS-0854977, NSF DMS-0854977, NSF DMS-0901330, grants FWF P 24572-N25 and FWF P20778, and an ERC grant --- GEMIS, V.P. was funded by grants NSF FRG DMS-0854977, NSF DMS-0854977, NSF DMS-0901330, grants FWF P 24572-N25 and FWF P20778,
%RFFI grants 11-01-00336-a and 11-01-00185-a, grants MK$-1192.2012.1$,
%NSh$-5139.2012.1$, and AG Laboratory GU-HSE, RF government
%grant, ag. 11 11.G34.31.0023.}

\begin{abstract}
In this paper we connect degenerations of Fano threefolds by projections.
Using  Mirror Symmetry we transfer these connections to the
side of Landau--Ginzburg models. Based on  that we suggest a
generalization of Kawamata's categorical approach to birational geometry
enhancing it via geometry of moduli spaces of Landau--Ginzburg models.
We suggest a conjectural application to Hasset--Kuznetsov--Tschinkel program
based on  new nonrationality ``invariants'' we consider~--- gaps and phantom categories. We make several conjectures for these invariants in the case of surfaces of general type and quadric bundles.
\end{abstract}

%14M25 Toric varieties, Newton polyhedra
%14H10 Families, moduli (algebraic)

 %14Q10 --- Surfaces, hypersurfaces

%14Q15 --- Higher-dimensional varieties
 %14N10 --- Enumerative problems (combinatorial problems)
 %53D45 --- Gromov--Witten invariants, quantum cohomology, Frobenius manifolds
%14N35 --- Gromov--Witten invariants, quantum cohomology
 %14J30 --- $3$-folds
 %14J45 --- Fano varieties
 %14J70 --- Hypersurfaces
%14J81 --- Relationships with physics
 %14D07   Variation of Hodge structures

\
\keywords{}

\section{Introduction}

\subsection{Moduli approach to Birational Geometry}

In recent years many significant developments have taken place in
Minimal Model Program (MMP) (see, for example, \cite{BCHM06})
based on the major advances in the study of singularities of
pairs. Similarly a categorical  approach to MMP was taken by
Kawamata. This approach was based on the correspondence Mori
fibrations (MF) and semiorthogonal decompositions (SOD). There was
no use of the discrepancies and of the effective cone in this
approach.

In the meantime   a new epoch, epoch of wall-crossing has emerged. The current situation with wall-crossing phenomenon, after  papers of Seiberg--Witten (\cite{SW}), Gaiotto--Moore--Neitzke (\cite{GMN}), Cecotti--Vafa (\cite{CV}) and
seminal works by Donaldson--Thomas (\cite{DT}), Joyce--Song (\cite{JS}),
Maulik--Nekrasov--Okounkov--Pandharipande (\cite{MNOP1}, \cite{MNOP2}), Douglas (\cite{D}), Bridgeland (\cite{B}), and
Kontsevich--Soibelman (\cite{KS1}, \cite{KS2}),  is very similar to the situation with Higgs Bundles after the works of Higgs and Hitchin --- it is clear  that general ``Hodge type'' of theory exists and needs to be developed. This lead to strong mathematical applications --- uniformization, Langlands program to mention a few. In the wall-crossing it is also clear that ``Hodge type'' of theory needs to be developed in order to reap some
mathematical benefits --- solve long standing problems in algebraic geometry.

Several steps were made to connect Homological Mirror Symmetry
(HMS) to  Birational Geometry %--- \cite{LK1}, \cite{LK2},
(see \cite{AAK}). The main idea in this paper is that proving HMS is
like studying birational transformations (noncommutative included)
on A and B side of HMS. Later  new ideas were  introduced in
\cite{HKK}, \cite{KKP}, \cite{KP}, \cite{DKK1},  \cite{DKK2},
\cite{BFK}. These ideas can be summarized as follows:

\begin{enumerate}
  \item  The moduli spaces of stability conditions of Fukaya--Seidel categories can be included in a one-parametric family with the moduli space of Landau--Ginzburg (LG) models as a central fiber.
  \item  The moduli space of LG models determines the birational geometry --- in the toric case this was proven in  \cite{DKK1}.
\end{enumerate}

The main idea of this paper is that we consider all Mori fibered spaces together, all Sarkisov links together, all relations between Sarkisov links together. The relations between all Sarkisov links are  determined by  geometry of the moduli space of LG models  and the moving schemes involved.

We introduce  the following main ideas:

\begin{enumerate}
\item All Fano varieties via their degenerations are connected by simple basic links --- projections of a special kind. We show it (see Table~\ref{table}) on Picard rank 1 Fano threefolds. These relations agree with their toric Landau--Ginzburg models (see Theorem~\ref{theorem:toric table}).
  \item All Fano manifolds can be considered together. In other words there exists a  big moduli space of LG models, which includes mirrors of all Fanos.
We demonstrate this partially in the case of two- and three-dimensional Fanos.
  \item We introduce an analogue of the canonical divisor measure for minimal model. For us this is the local geometry of the singularities and a fiber at infinity of the LG model. The last one affects the geometry of moduli spaces of LG models --- stability conditions. In fact we propose local models  for these moduli spaces (stacks). With  this observation the correspondence between usual and categorical approach to birational geometry looks as in Table~\ref{tab:CPINTRO}.

\begin{table}[h]
  \begin{center}
    \begin{tabular}[t]{|c|c|}
\hline
\begin{minipage}[c]{1.2\nanowidth}
\centering
\medskip

Classical

\medskip

\end{minipage}
&
\begin{minipage}[c]{1.2\nanowidth}
\medskip

\begin{center}

Derived

\end{center}

\medskip

\end{minipage}
\\
\hline \hline
\begin{minipage}[c]{1.2\nanowidth}
\medskip

\begin{center}

Mori fibrations.

\end{center}

\medskip

\end{minipage}
&
\begin{minipage}[c]{1.2\nanowidth}
\medskip

\begin{center}

SOD  or circuits.

\end{center}

\medskip

\end{minipage}
\\ \hline
\begin{minipage}[c]{1.2\nanowidth}
\medskip

\begin{center}

$(K_X+\Delta)$ log differentials.

\end{center}

\medskip

\end{minipage}
&
\begin{minipage}[c]{1.2\nanowidth}
\medskip

\begin{center}

Moving schemes  of the fiber at $\infty$ in Landau--Ginzburg model described  by differentials.

\end{center}

\medskip

\end{minipage}
\\\hline
\begin{minipage}[c]{1.2\nanowidth}
\medskip

\begin{center}

Sarkisov links.

\end{center}

\medskip

\end{minipage}
&
\begin{minipage}[c]{1.2\nanowidth}
\medskip

\begin{center}

2-dimensional faces.

\end{center}

\medskip

\end{minipage}
\\ \hline
\begin{minipage}[c]{1.2\nanowidth}
\medskip

\begin{center}

Relations between Sarkisov links.

\end{center}

\medskip

\end{minipage}
&
\begin{minipage}[c]{1.2\nanowidth}
\medskip

\begin{center}

3-dimensional faces.

\end{center}

\medskip

\end{minipage}
\\ \hline
    \end{tabular}
    \caption{Extended Kawamata Program.}
    \label{tab:CPINTRO}
  \end{center}
\end{table}

\item Following \cite{DKK2} and the pioneering work \cite{BBS} we develop the notions of phantom category and we emphasize its connection with  introduced in this paper notion of a moving scheme. The last one determines the geometry of the moduli space of LG models and as a result the geometry of the initial manifold. In the case of surfaces of general type we conjecture:

\begin{description}
  \item[A] Existence of nontrivial categories in SOD %(semiorthogonal decomposition)
with trivial Hochschild homology %(so called \emph{quasi-phantoms})
in the case of classical surfaces of general type, Campedelli, Godeaux, Burniat, Dolgachev surfaces and also in the categories of quotient of product of curves and fake $\PP^2$. These surfaces are not rational since they have a non-trivial fundamental group but also since they have conjecturally a quasi-phantom subcategory in their SOD. On the Landau--Ginzburg side these quasi-phantoms are described by the moving scheme. The deformation of the  Landau--Ginzburg models  is determined by the moving scheme so the quasi-phantoms factor in the geometry of the  Landau--Ginzburg models.
On the mirror side this translates to the fact the Exts between the quasi-phantom and the rest of the SOD determines the moduli space.
  \item[B] Things become even more interesting in the case of surfaces with trivial fundamental groups. We have conjectured that in the case of Barlow surface ( see~\cite{DKK2})  and rational blow-downs there exists a nontrivial category with a trivial $K^0$ group --- \emph{phantom category}.  The deformation of the  Landau--Ginzburg models  is determined by the moving scheme so the phantom factors in the geometry of the  Landau--Ginzburg models. Similarly on the mirror side this translates to the fact the Exts between the quasi-phantom and the rest of the SOD determines the moduli space.
  \item[C] We connect the existence of such phantom categories with nonrationality questions. In case of surfaces it is clear that phantoms lead to nonrationality.
In case of threefolds we exhibit examples (Sarkisov examples)  of nonrational threefolds where phantoms conjecturally imply nonrationality. We also introduce ``higher'' nonrationality  categorical invariants --- gaps of spectra, which conjecturally are not present in the  Sarkisov examples. These ideas are natural continuation of \cite{IKP}.
\end{description}

\item We introduce conjectural invariants associated to our moduli spaces --- gaps and local differentials. We suggest that these numbers (changed drastically via
``wall-crossing'') produce strong birational invariants. We relate
these invariants to so called Hasset--Kuznetsov--Tschinkel program (see~\cite{HASS},~\cite{KUZ}) ---
a program for studying rationality of four-dimensional cubic and its ``relatives''.

%  \item As an artifact we get some  confirmation for so called phantom categories
%--- nontrivial categories with trivial $K^0$. Some initial evidence for them was given %in \cite{LK3}
%and their existence was conjectured in \cite{DKK2}.
\end{enumerate}

%Later G.\,Tian proved in \cite{Ti97} (see also
%\cite{Donaldson2008}) that one can choose the curve $C\subset V_5$
%in such a way that $X_{22}$ has a trivial automorphism group and,
%hence, does not have holomorphic vector fields, but $X_{22}$ is
%not \emph{stable} in certain sense (for a smooth Fano manifold,

\medskip

This paper is organized as follows. In Sections~\ref{section:classical} and~\ref{subsubsection:MSVHS} we relate degenerations of
Fano manifolds via projections.

Using  mirror symmetry in Section~\ref{subsubsection:MSVHS} we transfer these connections to the
side of Landau--Ginzburg model. Based on  that  in Section~\ref{section:discussion} we suggest a
generalization of Kawamata's categorical approach to birational geometry
enhancing it via the geometry of moduli spaces of Landau--Ginzburg models.
We give several applications  most notably a  conjectural application to Hasset--Kuznetsov--Tschinkel program.
Our approach is based on two categorical nonrationality invariants --- phantoms and gaps.
Full details will appear in a future paper.

\medskip

{\bf Notations.} Smooth del Pezzo threefolds (smooth Fano threefolds of index 2) we denote by $V_n$,
where $n$ is its degree with respect to a Picard group generator
except for quadric denoted by $Q$. Fano threefold of Picard rank 1, index 1,
and degree $n$ we denote by $X_n$. The rest Fano threefolds we
denote by $X_{k.m}$, where $k$ is a Picard rank of a variety and
$m$ is its number according to~\cite{IsPr99}.

Laurent polynomial from the $k$-th line of Table~\ref{table} we denote by $f_k$.
Toric variety whose fan polytope is a Newton polytope of $f_k$ we denote by $F_k$ or just a variety number {\bf k}.

We denote a Landau--Ginzburg model for variety $X$ by $LG(X)$.

\noindent
{\bf Acknowledgment.} We are very grateful to A.\,Bondal,
M.\,Ballard, C.\,Diemer, D.\,Favero, F.\,Haiden, A. Iliev, A.\,Kasprzyck,
G.\,Kerr, M.\,Kontsevich, A. Kuznetsov,  T.\,Pantev, Y.\,Soibelman, D.\,Stepanov
for the useful discussions. I.\,C. is grateful to Hausdorff Research Institute of Mathematics (Bonn) and was funded by
RFFI grants 11-01-00336-a and AG Laboratory GU-HSE, RF government
grant, ag. 11 11.G34.31.0023, L.\,K was funded by grants NSF DMS0600800, NSF FRG DMS-0652633, NSF FRG DMS-0854977, NSF DMS-0854977, NSF DMS-0901330, grants FWF P 24572-N25 and FWF P20778, and an ERC grant --- GEMIS, V.P. was funded by grants NSF FRG DMS-0854977, NSF DMS-0854977, NSF DMS-0901330, grants FWF P 24572-N25 and FWF P20778,
RFFI grants 11-01-00336-a and 11-01-00185-a, grants MK$-1192.2012.1$,
NSh$-5139.2012.1$, and AG Laboratory GU-HSE, RF government
grant, ag. 11 11.G34.31.0023.

\section{``Classical'' Birational Geometry}
\label{section:classical}
In this section we recall some facts from classical birational geometry of three-dimensional Fano varieties. We also give a new
read of this geometry making it more suitable to connect with Homological Mirror Symmetry.

\subsection{An importance of being Gorenstein.}
Among singular Fano varieties, ones with canonical Gorenstein
singularities are of special importance. They arise in many
different geometrical problems: degeneration of smooth Fano
varieties with a special regards to the problem of Mirror Symmetry
(see \cite{Batyrev1994}, \cite{Batyrev2004}), classification of
reflexive polytopes (see \cite{KreuzerSkarke1997},
\cite{KreuzerSkarke1998}), mid points of Sarkisov links and bad
Sarkisov links (see \cite{Co95}, \cite{CPR}), compactification of
certain moduli spaces (see \cite{Mukai}), etc. Historically, Fano
varieties with canonical Gorenstein singularities are the original
Fano varieties. Indeed, the name \emph{Fano varieties} originated
in the works of V.\,Iskovskikh (see \cite{Is77}, \cite{Is78}) that
filled the gaps in old results by G.\,Fano who studied in
\cite{Fa34} and \cite{Fa42} anticanonically embedded Fano
threefolds with canonical Gorenstein singularities without naming
them so (cf. \cite{Ch96}).

In dimension two canonical singularities are always Gorenstein, so
being Gorenstein is a vacuous condition. Surprisingly, the
classification of del Pezzo surfaces with canonical singularities
is simpler than the classification of smooth del Pezzo surfaces
(see \cite{dP87}, \cite{Demazure}). Fano threefolds with canonical
Gorenstein singularities are not yet classified, but first steps
in this directions are already have been made by  S.\,Mukai,
P.\,Jahnke, I.\,Radloff, I.\,Cheltsov, C.\,Shramov,
V.\,Przyjalkowski, Yu.\,Prokhorov, and I.\,Karzhemanov (see
\cite{Mukai}, \cite{JaRa06}, \cite{ChPrSh2005}, \cite{Pr05},
\cite{Karzhemanov2008}, and \cite{Karzhemanov2009}).

\subsection{Birational maps between Fano varieties and their classification.}
V.\,Iskovskikh used birational maps between Fano threefolds to
classify them. Indeed, he discovered smooth Fano variety of degree
$22$ and Picard group $\mathbb{Z}$ by constructing the following
commutative diagram:
\begin{equation}
\label{equation:V5-V22}\xymatrix{
U\ar@{->}[d]_{\alpha}\ar@{-->}[rr]^{\rho}&&W\ar@{->}[d]^{\beta}\\%
V_5&&X_{22}\ar@{-->}[ll]_{\psi}}
\end{equation}
where $V_5$ is a smooth section of the Grassmannian
$\mathrm{Gr}(2,5)\subset\mathbb{P}^9$ by a linear subspace of
codimension $3$ (they are all isomorphic), $X_{22}$ is a smooth
Fano threefold of index $1$ and degree $22$ mentioned above, i.e.
$\mathrm{Pic}(X_{22})=\mathbb{Z}[-K_{X_{22}}]$ and
$-K_{X_{22}}^3=22$, and $\alpha$ is a blow-up of the curve $C$,
$\rho$ is a flop of the proper transforms of the secant lines to
$C$, $\beta$ contracts a surface to a curve $L\subset X_{22}$ with
$-K_{X_{22}}\cdot L=1$, and $\psi$ is a double projection from the
curve $L$ (see \cite{Is77} and \cite{Is78}). This approach is very
powerful. Unfortunately, it does not always work (see
Example~\ref{example:Iskovskikh-Manin}). V.\,Iskovskikh gave many
other examples of birational maps between smooth Fano threefolds
(see \cite{IsPr99}). Later K.\,Takeuchi produced more similar
examples in \cite{Takeuchi}. Recently, P.\,Jahnke, I.\,Radloff,
and I.\,Karzhemanov produced many new examples of Fano threefolds
with canonical Gorenstein singularities by using elementary
birational transformation between them.

\subsection{Birational maps between Fano varieties and Sarkisov program.}
Results of V.\,Iskovskikh, Yu.\,Manin, V.\,Shokurov, and
K.\,Takeuchi were used by V.\,Sarkisov and A.\,Corti to create
what is now known as the three-dimensional Sarkisov program (see
\cite{Co95}). In particular, this program decomposes any
birational maps between terminal $\mathbb{Q}$-factorial Fano
threefolds with Picard group $\mathbb{Z}$ into a sequence of
so-called elementary links (often called Sarkisov links).
Recently, the three-dimensional Sarkisov program has been
generalized in higher dimensions by C.\,Hacon and J.\,McKernan
(see \cite{HaconMcKernan}).

Unfortunately, the Sarkisov program is not applicable to Fano
varieties with non-$\mathbb{Q}$-factorial singularities, it is not
applicable to Fano varieties with non-terminal singularities, and
it is not applicable to Fano varieties whose Picard group is not
$\mathbb{Z}$. Moreover, in dimension bigger than two the Sarkisov
program is not explicit except for the toric case. In dimension
three the description of Sarkisov links is closely related to the
classification of terminal non-$\mathbb{Q}$-factorial Fano
threefolds whose class group is $\mathbb{Z}^2$. In general this
problem is very far from being solved. But in Gorenstein case we
know a lot (see \cite{JaPeRa07}, \cite{JaPeRa09},
\cite{Kaloghiros}, \cite{CutroneMarshburn}, \cite{BlancLamy}).

\subsection{Basic links between del Pezzo surfaces with canonical singularities.}
\label{subsection:2dim basic links}
The anticanonical linear system $|-K_{\mathbb{P}^2}|$ gives an
embedding $\mathbb{P}^2\to\mathbb{P}^9$. Its image is a surface of
degree $9$, which we denote by $S_9$. Let $\pi\colon S_9\dasharrow
S_8$ be a birational map induced by the linear projection
$\mathbb{P}^9\dasharrow\mathbb{P}^8$ from a point in $S_9$ (the
center of the projection), where $S_8$ is surface of degree $8$ in
$\mathbb{P}^8$ obtained as the image of $S_9$ under this
projection. For simplicity, we say that $\pi_9$ is a projection of
the surface $S_9$ from a point. We get a commutative diagram
$$
\xymatrix{&\widetilde{S}_9\ar@{->}[dl]_{\alpha_9}\ar@{->}[dr]^{\beta_9}&\\%
S_9\ar@{-->}[rr]_{\pi_{9}}&&S_8,}
$$
where $\alpha_9$ is a blow-up of a smooth point of the surface
$S_9$ and $\beta_9$~is~a birational morphism that is induced by
$|-K_{\widetilde{S}_9}|$. Note that $S_8$ is a del Pezzo surface and
$(-K_{S_{8}})^2=8$.

Iterating this process and taking smooth points of the obtained
surfaces $S_i$ as centers of projections, we get the following
sequence of projections
\begin{equation}
\label{equation:projections}
\xymatrix{ \mathbb{P}^2=S_9\ar@{-->}[r]^-{\pi_9} & S_8 \ar@{-->}[r]^-{\pi_8} & S_7\ar@{-->}[r]^-{\pi_7} & S_6\ar@{-->}[r]^-{\pi_6}
& S_5\ar@{-->}[r]^-{\pi_5} & S_4\ar@{-->}[r]^-{\pi_4} & S_3,}%
\end{equation}
where every $S_i$ is a del Pezzo surface with canonical
singularities, e.g. $S_3$ is a cubic surface in $\mathbb{P}^3$
with isolated singularities that is not a cone. Note that we have
to stop our iteration at $i=3$, since the projection of $S_3$ from
its smooth point gives a rational map of degree $2$.

For every constructed projection $\pi_{i}\colon S_i\dasharrow
S_{i-1}$, we get a commutative diagram
\begin{equation}
\label{equation:basic-link}
\xymatrix{&\widetilde{S}_{i}\ar@{->}[dl]_{\alpha_{i}}\ar@{->}[dr]^{\beta_{i}}&\\%
S_i\ar@{-->}[rr]_{\pi_{i}}&&S_{i-1},}
\end{equation}
where $\alpha_i$ is a blow-up of a smooth point of the surface
$S_i$ and $\beta_i$~is~a birational morphism that is induced by
$|-K_{\widetilde{S}_i}|$. We say that the
diagram~(\ref{equation:basic-link}) is a \emph{basic link} between
del Pezzo surfaces.

Instead of $\mathbb{P}^2$, we can use an irreducible quadric as a
root of our sequence of projections. In this way, we obtain all
del Pezzo surfaces with canonical singularities except for
$\PP^2$, quadric cone and
quartic
hypersurfaces in $\mathbb{P}(1,1,1,2)$ and sextic hypersurfaces in
$\mathbb{P}(1,1,2,3)$. Note that $S_3$ is not an intersection of
quadrics (trigonal case), anticanonical linear system of every
quartic hypersurface in $\mathbb{P}(1,1,1,2)$ with canonical
singularities is a morphism that is not an embedding
(hyperelliptic case), and anticanonical linear system of every
sextic hypersurface in $\mathbb{P}(1,1,2,3)$ has a unique base
point.

Let us fix an action of a torus $(\mathbb{C}^{*})^2$ on
$\mathbb{P}^2$. So, if instead of taking smooth points as
projection centers, we take toric smooth points (fixing the torus
action), then the constructed sequence of
projections~(\ref{equation:projections}) and the commutative
diagram~(\ref{equation:basic-link}) are going to be toric as well.
In this case we say that the diagram~(\ref{equation:basic-link}) is
a \emph{toric basic link} between toric del Pezzo surfaces. Recall
that there are exactly $16$ toric del Pezzo surfaces with
canonical singularities. In fact, we can explicitly describe all
possible toric projections of toric del Pezzo surfaces from their
smooth toric points (this is purely combinatorial problem), which
also gives the complete description of all \emph{toric basic link}
between toric del Pezzo surfaces.  The easiest way of doing this
is to use reflexive lattice polytopes that correspond del Pezzo
surfaces with canonical singularities\footnote{Recall that toric
$n$-dimensional Fano varieties with canonical Gorenstein
singularities up to isomorphism are in one-two-one correspondence
with reflexive $n$-dimensional lattice polytopes in $\mathbb{R}^n$
up to $\mathrm{SL}_{n}(\mathbb{Z})$ action.}. The answer is given
by Figure~\ref{figure:del Pezzo tree}.
\begin{figure}[h]
\centering
\xymatrix{
 & & & \includegraphics[width=1cm]{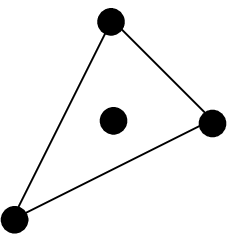} \ar[d] & & & \\
 & & \includegraphics[width=1cm]{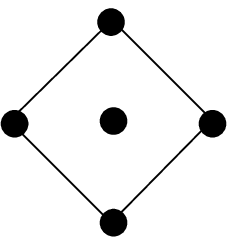} \ar[d] %\ar[drr]
 & \includegraphics[width=1cm]{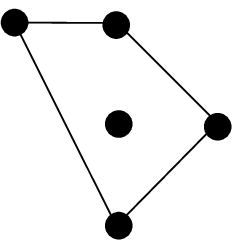} \ar[dr] \ar[dl] & \includegraphics[width=1cm]{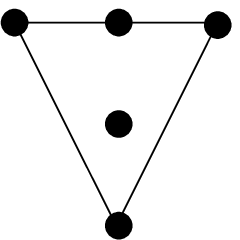} \ar[d] %\ar[dll]
 & & \\
 & & \includegraphics[width=1cm]{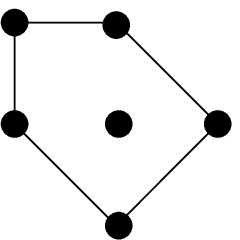} \ar[dl] \ar[drr] & & \includegraphics[width=1cm]{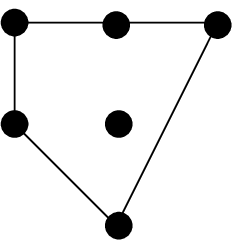} \ar[dll] \ar[dr] \ar[d] & & \\
 & \includegraphics[width=1cm]{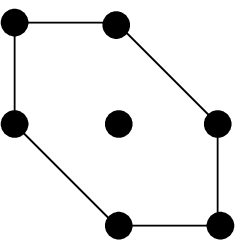} \ar[dr] & \includegraphics[width=1cm]{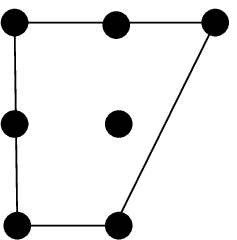} \ar[d] \ar[drr] & & \includegraphics[width=1cm]{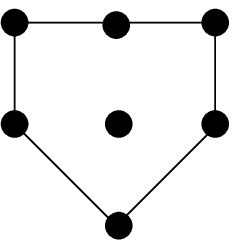} \ar[d] \ar[dll] &
\includegraphics[width=1.5cm]{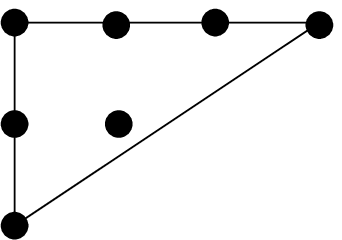} \ar[dl]\\
 & & \includegraphics[width=1cm]{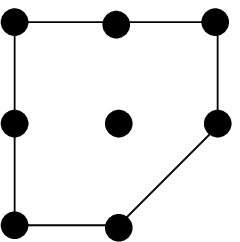} \ar[d] \ar[dr] & & \includegraphics[width=1.5cm]{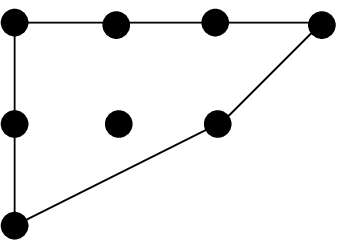} \ar[d] \ar[dl] & & \\
 & &  \includegraphics[width=1cm]{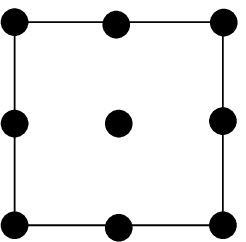} & \includegraphics[width=1.5cm]{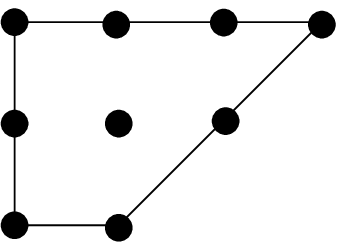} \ar[d] & \includegraphics[width=1.75cm]{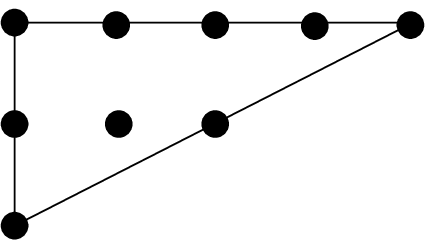} & & \\
 & & & \includegraphics[width=1.5cm]{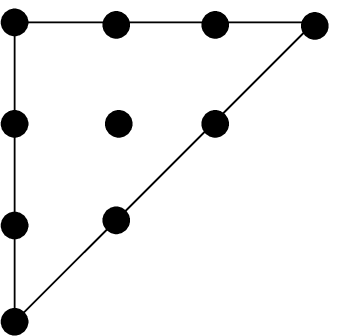} & & & \\
}
\caption{Del Pezzo tree.}\label{figure:del Pezzo tree}
\end{figure}

\subsection{Basic links between Gorenstein Fano threefolds with canonical singularities.}
Similar to the two-dimensional case, it is tempting to fix few
very explicit \emph{basic links} between Fano threefolds with
canonical Gorenstein singularities  (\emph{explicit} here means
that these basic links should have a geometric description like
projections from points or curves of small degrees, etc.) and
describe all such threefolds using these links. However, this is
impossible in general due to the following

\begin{ex}[Iskovskikh--Manin]
\label{example:Iskovskikh-Manin} The Fano threefold with canonical
singularities that is birational to a smooth quartic threefold is
a smooth quartic threefolds itself (see \cite{IsMa71} and
\cite{Cheltsov03}).
\end{ex}

However, if we are only interested in classification \emph{up to a
deformation}, then we can try to fix few very explicit \emph{basic
links} between Fano threefolds with canonical Gorenstein
singularities and describe all \emph{deformation types} of such
threefolds using these links. Moreover, it seems reasonable to
expect that this approach allows us to obtain all smooth Fano
threefolds in a unified way.

We can define three-dimensional basic links similar to the
two-dimensional case. Namely, let $X$ be a Fano threefold with canonical Gorenstein singularities.  Put $g=K_X^3/2+1$. Then
$g$ is a positive integer and
$h^{0}(\mathcal{O}_{X}(-K_{X}))=g+1$. Let $\varphi_{|-K_X|}\colon
X\to\mathbb{P}^{g+1}$ be a map given by $|-K_{X}|$. Then
\begin{enumerate}
\item either $\mathrm{Bs}|-K_X|\neq \emptyset$, and all such $X$ are found in \cite{JaRa06},%
\item or $\varphi_{|-K_X|}$ is not a morphism, the threefold $X$ is called hyperelliptic, and all such $X$ are found in \cite{ChPrSh2005},%
\item or $\varphi_{|-K_X|}$ is a morphism and  $\varphi_{|-K_X|}(X)$ is not an intersection of quadrics, the threefold $X$ is called trigonal, and all such $X$ are found in \cite{ChPrSh2005},%
\item or $\varphi_{|-K_X|}(X)$ is an intersection of quadrics.
\end{enumerate}

Thus, we always can assume that $\varphi_{|-K_X|}$ is an embedding
and $\varphi_{|-K_X|}(X)$ is an intersection of quadrics. Let us
identify $X$ with its anticanonical image $\varphi_{|-K_X|}(X)$.

Let $Z$ be either a smooth point of the threefold $X$, or a
terminal cDV  point (see \cite{YPG}) of the threefold $X$, or a
line in $X\subset\mathbb{P}^{g+1}$ that does not pass through
non-cDV point, or a smooth irreducible conic in
$X\subset\mathbb{P}^{g+1}$ that does not pass through non-cDV
point. Let $\alpha\colon \widetilde{X}\to X$ be a blow-up of the ideal sheaf of the
subvariety $Z\subset X$.

\begin{lem}
\label{lemma:Fanos-basic-links-1} Suppose that $Z$ is either a cDV point or a line.
Then $|-K_{\widetilde{X}}|$ is free from base points.
\end{lem}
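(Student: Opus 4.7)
The plan is to identify $|-K_{\widetilde X}|$ geometrically with the proper transforms of hyperplane sections of $X\subset\mathbb{P}^{g+1}$ that contain the subscheme $Z$, and then verify base-point-freeness in two regions: away from the exceptional divisor $E$ of $\alpha$, and on $E$ itself. The first task is to establish the discrepancy formula $-K_{\widetilde X}=\alpha^{*}(-K_X)-E$. In the cDV-point case, $\alpha$ blows up the maximal ideal of a terminal Gorenstein threefold singularity; writing it as a hypersurface $\{f=0\}\subset(\mathbb{A}^{4},0)$ with $\mathrm{mult}_{0}f=2$ and using adjunction on the ambient blow-up gives discrepancy $1$ along $E$. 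In the line case, $Z=L$ is a smooth curve passing through at most cDV points, and the standard formula for blowing up a smooth curve in a smooth threefold (extended across the cDV locus by the local hypersurface computation above) yields the same answer.

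Base-point-freeness away from $E$ is almost immediate. Given $p\in\widetilde X\setminus E$, set $q=\alpha(p)\in X\setminus Z$; since $\dim Z\leq 1$ and $g+1\geq 3$, the hyperplanes of $\mathbb{P}^{g+1}$ that contain $Z$ but miss $q$ form a nonempty open subset of the hyperplanes through $Z$, and any such hyperplane section supplies a member of $|-K_{\widetilde X}|$ not passing through $p$.

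The heart of the matter, and the place where the intersection-of-quadrics hypothesis enters, is base-point-freeness on $E$. When $Z=L$ is a line, $E$ is a ruled surface over $L$; I would compute $-K_{\widetilde X}|_E$ by adjunction together with the normal bundle $\mathcal N_{L/X}$, and reduce base-point-freeness to the claim that in each fibre of $E\to L$ the tangent hyperplanes to $X$ along $L$ span every normal direction. If some normal direction were missed at a point, every quadric of $\mathbb{P}^{g+1}$ containing $L$ would be tangent to $X$ in that direction, contradicting the fact that the quadrics through $X$ cut $X$ out scheme-theoretically. When $Z$ is a cDV point $p$, the same idea works pointwise: the tangent cone of $X$ at $p$ is a quadric (of rank depending on the type of the cDV point), so the quadrics of $\mathbb{P}^{g+1}$ through $p$ cut out hyperplane sections of $X$ whose proper transforms sweep out all of $E$.

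The main obstacle will be precisely this last analysis on $E$, especially for non-ODP cDV points where $E$ can be reducible or non-reduced and the tangent cone is a degenerate quadric, and for lines with an unbalanced normal bundle or passing through cDV points. Translating the global intersection-of-quadrics hypothesis into the precise local surjectivity statement on $E$ --- uniformly across cDV types and normal bundle splittings --- is where the real work lies.
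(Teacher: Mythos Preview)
Your overall strategy is sound, but you are working much harder than necessary on $E$, and in doing so you misplace the role of the intersection-of-quadrics hypothesis. The paper's proof is a single sentence: it follows from the assumption that $\varphi_{|-K_X|}$ is an embedding. Here is why that suffices. With $X\subset\mathbb{P}^{g+1}$ anticanonically embedded, blow up the ambient $\mathbb{P}^{g+1}$ along $Z$; the linear system of hyperplanes through $Z$ becomes $|H-E_{\mathrm{amb}}|$ on the blow-up, and this system is base-point-free because it defines the projection morphism $\mathrm{Bl}_Z\mathbb{P}^{g+1}\to\mathbb{P}^{g-1}$ (for $Z$ a line) or $\mathrm{Bl}_Z\mathbb{P}^{g+1}\to\mathbb{P}^{g}$ (for $Z$ a point). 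Since $\widetilde X$ is the proper transform of $X$ in this ambient blow-up and $(H-E_{\mathrm{amb}})|_{\widetilde X}=\alpha^{*}(-K_X)-E=-K_{\widetilde X}$, restricting the base-point-free ambient system to $\widetilde X$ already gives a base-point-free subsystem of $|-K_{\widetilde X}|$. No analysis of normal bundles, tangent cones, or cDV types is required, and the quadrics never enter.

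Concretely, the ``obstacle'' you flag on $E$ is not real: hyperplanes through $Z$ in $\mathbb{P}^{g+1}$ already separate every normal direction in $\mathcal{N}_{Z/\mathbb{P}^{g+1}}$, hence a fortiori in the sub-bundle $\mathcal{N}_{Z/X}$. Your proposed use of quadrics is also internally confused: quadrics through $Z$ cut out members of $|-2K_X|$, not hyperplane sections, so the sentences ``the quadrics of $\mathbb{P}^{g+1}$ through $p$ cut out hyperplane sections'' and ``every quadric containing $L$ would be tangent to $X$'' do not connect to $|-K_{\widetilde X}|$ in the way you intend. The intersection-of-quadrics hypothesis is needed in the paper only for the \emph{next} lemma, where one must show that the projection map $\beta$ is birational onto its image rather than generically finite of higher degree.
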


\begin{proof}
This follows from an assumption that $\varphi_{|-K_X|}(X)$ is an embedding.
\qed\end{proof}

If $Z$ is a smooth point, let $\beta\colon X\to X^{\prime}$~be~a
morphism given by $|-K_{\widetilde{X}}|$.

\begin{lem}
\label{lemma:Fanos-basic-links-2} Suppose that $Z$ is either a cDV point or a line. Then the morphism $\beta$ is birational
and $X^{\prime}$ is a Fano variety with canonical Gorenstein
singularities such that $-K_{X^{\prime}}$ is very ample.
\end{lem}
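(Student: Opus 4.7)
The plan is to identify $X'$ as the anticanonical image of $\tilde X$ and derive the claimed properties by combining intersection theory with a crepancy argument. By Lemma~\ref{lemma:Fanos-basic-links-1} the linear system $|-K_{\tilde X}|$ is base-point free, so the rational map $\beta$ is in fact a morphism; set $X' = \beta(\tilde X) \subset \mathbb{P}^N$ with $N+1 = h^0(\tilde X, -K_{\tilde X})$. The content is then to show that $\beta$ is birational, that $X'$ is Fano with canonical Gorenstein singularities, and that $-K_{X'}$ is very ample.

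To see that $\beta$ is birational I would verify bigness of $-K_{\tilde X}$ by direct computation of $(-K_{\tilde X})^3$. When $Z$ is a line, using $-K_{\tilde X} = \alpha^*(-K_X) - E$ together with the standard identities $(\alpha^*(-K_X))^2 \cdot E = 0$, $\alpha^*(-K_X) \cdot E^2 = -1$ and $E^3 = -1$ (valid for the blow-up of a smooth rational curve $Z$ with $(-K_X) \cdot Z = 1$), one obtains $(-K_{\tilde X})^3 = (-K_X)^3 - 2$. This is strictly positive since $X$ is anticanonically embedded as an intersection of quadrics in $\mathbb{P}^{g+1}$, which forces $g \ge 3$ and hence $(-K_X)^3 = 2g-2 \ge 4$. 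When $Z$ is a cDV point, the same strategy works: $\alpha^*(-K_X)|_E$ is trivial since $-K_X$ is Cartier at $Z$, so the mixed terms vanish and $(-K_{\tilde X})^3 = (-K_X)^3 - E^3$ with $E^3$ a small positive integer determined by the type of compound Du Val singularity (for instance $E^3 = 2$ at an ordinary double point). In either case $-K_{\tilde X}$ is nef and big, so $\beta$ is birational onto $X'$.

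For the singularities of $X'$ I would run a crepancy argument. Since $|-K_{\tilde X}|$ is base-point free, $-K_{\tilde X} = \beta^* \mathcal{O}_{X'}(1)$, so every $\beta$-contracted curve $C$ satisfies $-K_{\tilde X} \cdot C = 0$; this forces $K_{\tilde X} = \beta^* K_{X'}$. Hence $-K_{X'}$ is Cartier and ample (realized as the hyperplane class of $X' \subset \mathbb{P}^N$), $X'$ is Gorenstein Fano, and $\beta$ is crepant. It remains to show that $\tilde X$ itself has canonical singularities, for then crepancy transports canonicity from $\tilde X$ down to $X'$. When $Z$ is a line this is standard: the blow-up of a smooth curve in a canonical Gorenstein threefold has canonical singularities, as one sees by comparing discrepancies on a common log resolution. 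When $Z$ is a cDV point, canonicity of $\tilde X$ is verified by a type-by-type local analysis of the blow-up of each compound Du Val singularity using \cite{YPG}. Very ampleness of $-K_{X'}$ is then automatic from the factorization $\tilde X \to X' \hookrightarrow \mathbb{P}^N$, which exhibits $-K_{X'}$ as the hyperplane class of a projectively embedded variety.

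The main obstacle is the singularity analysis for $\tilde X$ in the cDV case: one must verify, type by type, that blowing up each species of compound Du Val singularity produces only canonical singularities on $\tilde X$, since this is what couples crepancy of $\beta$ to canonicity of $X'$. All other steps reduce to routine intersection-theoretic bookkeeping combined with the standard fact that a base-point free big linear system defines a birational morphism onto its image.
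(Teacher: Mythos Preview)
Your approach is quite different from the paper's. The paper gives a one-line proof --- ``The required assertion follows from the fact that $X$ is an intersection of quadrics'' --- relying on the projective geometry of projections of intersections of quadrics from linear centers, whereas you argue birationally by computing $(-K_{\widetilde X})^3$ for bigness and then running a crepancy argument. Your route is more explicit and in principle self-contained, but note two issues.

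First, a numerical slip in the line case: for a smooth rational curve $L$ with $-K_X\cdot L=1$ one has $\deg \mathcal{N}_{L/X}=-1$, so $E^3=-\deg\mathcal{N}_{L/X}=+1$, not $-1$. The correct outcome is $(-K_{\widetilde X})^3=(-K_X)^3-4$, not $(-K_X)^3-2$; this is confirmed by the degree drops under \SarL\ in Figure~\ref{figure:Fano snake} and Table~\ref{table} (e.g.\ $46\to 42$). Positivity survives because the intersection-of-quadrics hypothesis in fact forces $g\ge 5$ (the cases $g=3,4$ are hyperelliptic/trigonal), so $(-K_X)^3\ge 8$.

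Second, the gap you flag in the cDV case is genuine and not just bookkeeping. Writing $a(F,\widetilde X)=a(F,X)-\mathrm{mult}_F(\alpha^*E)$ for divisors $F$ over $Z$, terminality of $X$ gives $a(F,X)\ge 1$, but there is no a priori bound on $\mathrm{mult}_F(\alpha^*E)$, so canonicity of the ordinary blow-up of an arbitrary cDV point does require the case analysis you describe (it works for $cA_n$, where one sees at worst $cA_{n-2}$ on $\widetilde X$, but the $cD$/$cE$ cases need separate verification). The paper's projective argument sidesteps this entirely by working directly with $X'$ as a projection of an intersection of quadrics rather than by descending canonicity through $\widetilde X$. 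Also, in your crepancy step you should insert Stein factorization (or take $X'$ to be the normal image) before asserting that $K_{X'}$ is Cartier with $-K_{X'}=\mathcal{O}_{X'}(1)$; as written this is circular.
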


\begin{proof}
The required assertion follows from the fact that $X$ is an
intersection of quadrics.
\qed\end{proof}

If $Z$ is a conic, then we need to impose few additional
assumption on $X$ and $Z$ (cf. \cite[Theorem~1.8]{Takeuchi}) to be
sure that the morphism $\beta$ is birational, and $X^{\prime}$ is
a Fano variety with canonical Gorenstein singularities such that
$-K_{X^{\prime}}$ is very ample. In toric case these conditions
can be easily verified.

Let $\pi\colon X\dasharrow X^{\prime}$ be a projection from $Z$.
If $Z$ is not a smooth point, then the diagram
\begin{equation}
\label{equation:basic-link-Fanos}
\xymatrix{&\widetilde{X}\ar@{->}[dl]_{\alpha}\ar@{->}[dr]^{\beta}&\\%
X\ar@{-->}[rr]_{\pi}&&X^{\prime}}
\end{equation}
commutes. Unfortunately, if $Z$ is a smooth point, then the
diagram~(\ref{equation:basic-link-Fanos}) does not commute. In this case, we should define the \emph{basic link}
between Fano threefolds in a slightly different way. Namely, if
$Z$ is a smooth point, we still can consider the commutative
diagram~(\ref{equation:basic-link-Fanos}), but we have to assume
that $\pi$ is a projection from the projective tangent space to
$X$ at the point $Z$ (instead of projection from $Z$ like in other
cases). Moreover, if $Z$ is a smooth point, similar to the case
when $Z$ is a conic, we must impose few additional assumptions on
$X$ and $Z$ to be sure that the morphism $\beta$ is birational,
and $X^{\prime}$ is a Fano variety with canonical Gorenstein
singularities such that $-K_{X^{\prime}}$ is very ample. These
conditions can be easily verified in many cases --- in the toric
case, in the case of index bigger then $1$, %when $-K_{X}$ is divisible by $2$ in
%$\mathrm{Pic}(X)$,
see Remark~\ref{remark:index-2-case}.%, i.e. the Fano index of $X$ is divisible by $2$,
%and in the case when $-K_{X}$ is divisible by $3$ in
%$\mathrm{Pic}(X)$, i.e. when $X$ is a quadric threefold.

We are going call the diagram~(\ref{equation:basic-link-Fanos}) a
\emph{basic link} between Fano threefolds of type
\begin{itemize}
\item \SarSP\ if $Z$ is a smooth point,
\item \SarDP\  (or \SarODP\ or \SarCDV, respectively) if $Z$ is a double point (ordinary double point or non-ordinary double point, respectively),%
\item \SarL\ if $Z$ is a line,%
\item \SarC\ if $Z$ is a conic.
\end{itemize}
Moreover, in all possible cases, we are going to call $Z$ a
\emph{center} of the basic link~(\ref{equation:basic-link-Fanos}) or
\emph{projection center} (of $\pi$).

\begin{oss}
\label{remark:index-2-case} Suppose that $Z$ is a smooth point,
and $-K_{X}\sim 2H$ for some ample Cartier divisor $H$. Put
$d=H^3$. Then the linear system $|H|$ induces a rational map
$\varphi_{|H|}\colon X\dasharrow\mathbb{P}^{d+1}$ (this follows
from the Riemann--Roch Theorem and basic vanishing theorems). If
$\varphi_{|H|}$ is not an embedding, i.e. $H$ is not very ample,
then $X$ can be easily described exactly as in the smooth case
(see \cite{IsPr99}). Namely, one can show that $X$ is either a
hypersurface of degree $6$ in $\mathbb{P}(1,1,1,2,3)$ or a
hypersurface of degree $4$ in $\mathbb{P}(1,1,1,1,2)$. % ??.
Similarly, if $H$ is very ample and $\varphi_{|H|}(X)$ is not an
intersection of quadrics in $\mathbb{P}^{d+1}$, then $X$ is just a
cubic hypersurface in $\mathbb{P}^4$. % (CHECK THIS).
Assuming that
$\varphi_{|H|}(X)$ is an intersection of quadrics in
$\mathbb{P}^{d+1}$ (this is equivalent to $(-K_{X})^{3}>24$) and
identifying $X$ with its image $\varphi_{|H|}(X)$ in
$\mathbb{P}^{d+1}$, we see that there exists a commutative diagram
\begin{equation}
\label{equation:basic-link-Fanos-point}
\xymatrix{&\widetilde{X}\ar@{->}[dl]_{\alpha}\ar@{->}[dr]^{\beta}&\\%
X\ar@{-->}[rr]_{\pi}&&X^{\prime},}
\end{equation}
where $\pi\colon X\dasharrow X^{\prime}$ is a projection of the
threefold $X\subset\mathbb{P}^{d+1}$ from the point $Z$. Then
$X^{\prime}$ is a Fano threefold with canonical Gorenstein
singularities whose Fano index is divisible by $2$ as well. %We say
%than the diagram~(\ref{equation:basic-link-Fanos-point}) is a
%\emph{basic link} between Fano threefolds of type \SarSP, and we call the point $Z$ a \emph{center} of the basic
%link~(\ref{equation:basic-link-Fanos-point}) or \emph{projection
%center} (of $\pi$).
%The same can be defined if $-K_X$ is divisible by 3.
\end{oss}

Similar to the two-dimensional case, we can take $\mathbb{P}^3$ or
an irreducible quadric in $\mathbb{P}^3$ and start applying basic
links interactively. Hypothetically, this would give us all (or
almost all) deformation types of Fano threefolds with canonical
Gorenstein singularities whose anticanonical degree is at most
$64$ (the anticanonical degree decrees after the basic link).

\subsection{Toric basic links between toric Fano threefolds with canonical Gorenstein singularities.}
\label{subsection:3dim basic links}
Let $X$ be a toric Fano threefold with canonical Gorenstein
singularities. Let us fix the action of the torus
$(\mathbb{C}^{*})^3$ on $X$. Suppose that $-K_{X}$ is very ample
and $X$ is not trigonal. Then we can identify $X$ with its
anticanonical image in $\mathbb{P}^{g+1}$, where $g=(-K_X)^3/2+1$
(usually called the genus of the Fano threefold $X$). If $Z$ is
not a smooth point of the threefold $X$, then the commutative
diagram~(\ref{equation:basic-link-Fanos}) is torus invariant as
well, and we call the basic link~\ref{equation:basic-link-Fanos} a
\emph{toric basic link}. This gives us three types of toric basic
links:  \SarDP\ if $Z$ is a double point (\SarODP\ if $Z$ is an ordinary double point, and \SarCDV\ if $Z$ is non-ordinary double point), \SarL\ if $Z$ is a line, and \SarC\ if $Z$ is a
conic. In the case when $Z$ is a smooth torus invariant point, we
proceed as in Remark~\ref{remark:index-2-case} and obtain the
toric basic link of type \SarSP\ assuming that the
Fano index of the threefold $X$ is divisible by $2$ or $3$ and
$(-K_{X})^{3}>24$.

We can take $X=\mathbb{P}^3$ and start applying toric basic links
until we get a toric Fano threefold with canonical Gorenstein
singularities to whom we can not apply any toric basic link (e.g.
when we get a toric quartic hypersurface in $\mathbb{P}^{4}$).
Hypothetically, this would give us birational maps between almost all toric Fano threefolds with canonical
Gorenstein singularities whose anticanonical degree is at most
$64$. Similarly, we can take into account irreducible quadrics in
$\mathbb{P}^4$ to make our picture look more complicated and,
perhaps, refined. Moreover, we can start with
$X=\mathbb{P}(1,1,1,3)$ or $X=\mathbb{P}(1,1,4,6)$, which are the
highest anticanonical degree Fano threefolds with canonical
Gorenstein singularities (see \cite{Pr05}) to get possibly all
toric Fano threefolds with canonical Gorenstein singularities.
Keeping in mind that there are $4319$ such toric Fano threefolds,
we see that this problem requires a lot of computational efforts
and usage of databases of toric Fano threefolds (see \cite{Br}).

Let us restrict our attention to toric Fano threefolds with
canonical Gorenstein singularities that are known to be smoothable
to smooth Fano threefolds with Picard group $\mathbb{Z}$. Then
starting with $\mathbb{P}^3$ and %with $X$ to be
with singular
quadric in
$\mathbb{P}^4$ with one ordinary double point %$\mathbf{Q}$
and taking into account \emph{some} toric basic
links, we obtain Figure~\ref{figure:Fano snake},
\begin{figure}[h]
$$
\hspace{-4pc}
\xymatrix{
 & & {\mathbf Q} \ar@{-->}[d]^-{\mathrm {II_{p}}} & & & & & & \\
{\mathbf \PP^3} \ar@{-->}[d]^-{\mathrm {II_{p}}} & & X_{2.30} \ar@{-->}[d]^-{\mathrm {II_{l}}} & & & & & & \\
V_{2.35} \ar@{-->}[d]^-{\mathrm {II_{p}}} & & X_{3.23} \ar@{-->}[d]^-{\mathrm {II_{odp}}} & & & & & & \\
V_{2.32} \ar@{-->}[d]^-{\mathrm {II_{p}}} \ar@{-->}[r]^-{\mathrm {II_c}} &
X_{3.24} \ar@{-->}[r]^-{\mathrm {II_{odp}}} & X_{4.9}
\ar@{-->}[r]^-{\mathrm {II_c}} & X_{4.6} \ar@{-->}[r]^-{\mathrm {II_c}} &
X_{3.12} \ar@{-->}[r]^-{\mathrm {II_{odp}}} & X_{3.10} \ar@{-->}[r]^-{\mathrm {II_{odp}}} & X_{4.1} \ar@{-->}[r]^-{\mathrm {II_{odp}}} & {\mathbf X_{22}} \ar@{-->}[r]^-{\mathrm {II_{odp}}} & X_{2.13} \ar@{-->}[d]^-{\mathrm {II_{odp}}} \\
{\mathbf V_5} \ar@{-->}[d]^-{\mathrm {II_{p}}} & & & & & & & & {\mathbf X_{18}} \ar@{-->}[d]^-{{\mathrm {II_{cDV}}}}\\
{\mathbf V_4} \ar@{-->}[d]^-{\mathrm {II_{p}}} & & {\mathbf X_4} & {\mathbf X_6} \ar@{-->}[l]_-{{\mathrm {II_{cDV}}}} & {\mathbf X_8} \ar@{-->}[l]_-{{\mathrm {II_{cDV}}}} & {\mathbf X_{10}} \ar@{-->}[l]_-{{\mathrm {II_{cDV}}}} & {\mathbf X_{12}} \ar@{-->}[l]_-{\mathrm {II_{odp}}} & {\mathbf X_{14}} \ar[l]_-{\mathrm {II_{odp}}} & {\mathbf X_{16}} \ar@{-->}[l]_-{\mathrm {II_{odp}}} \\
{\mathbf V_3} \ar@{-->}[d]^-{\mathrm {II_{p}}} & & & & & & & & \\
{\mathbf V_2} & & }$$
\caption{Fano snake.}\label{figure:Fano
snake}
\end{figure}
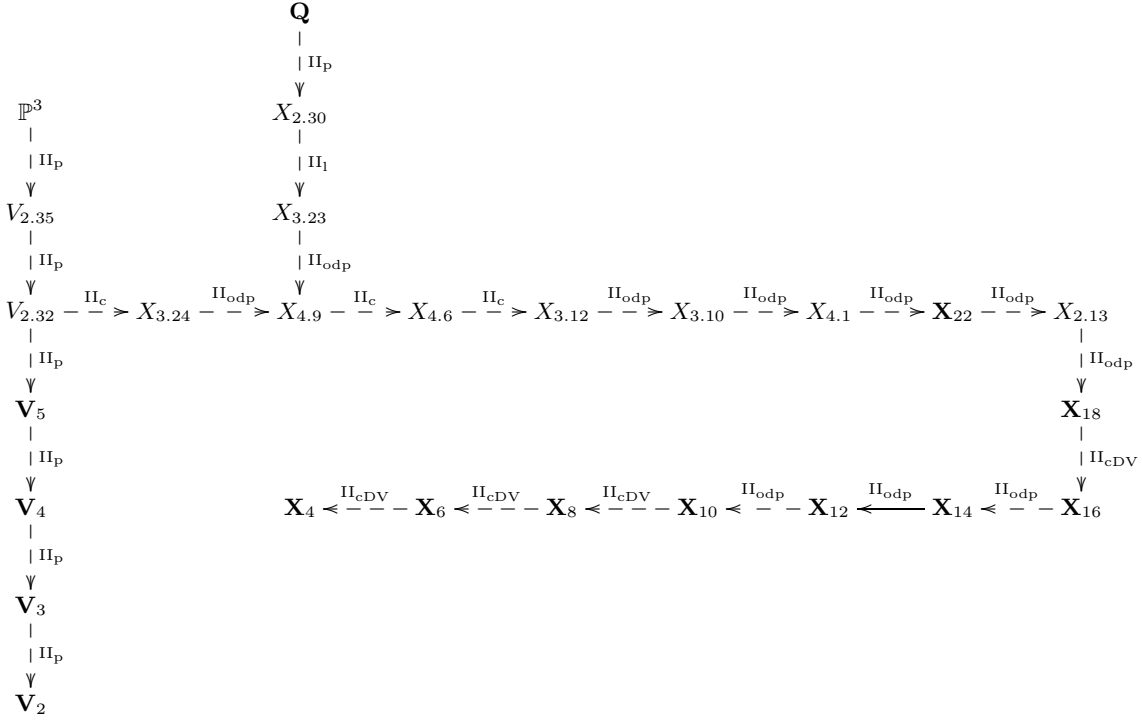
where we use bold fonts to denote Fano threefolds with Picard
group $\mathbb{Z}$. % and use self-explaining notations% (say
%$\mathbf{V}_4$ is a toric Fano threefold with Picard group
%$\mathbb{Z}$ of Fano index two such that $(-K_{X})^3=8\times
%4=32$, and $X_{4.9}$ is a toric Fano threefold with canonical
%Gorenstein singularities that is smoothable to a smooth Fano
%threefold with Picard group $\mathbb{Z}^4$ whose ordinal number in
%\cite{IsPr99} is $4.9$, and $X_{4.9}$ is obtained by a toric basic
%link of type \SarODP\ from either a Fano threefold
%$X_{3.23}$ or a Fano threefold  $X_{3.24}$).

Recent progress in Mirror Symmetry for smooth Fano threefolds (see
\cite{Golyshev2002}, \cite{Golyshev2007}, \cite{ALESS}, \cite{Prz07},
\cite{Prz09}, \cite{fanosearch}) shed a new light on and attracts
a lot of attention to toric degenerations of smooth Fano
threefolds (see \cite{Batyrev1994}, \cite{Batyrev2004}, \cite{Ga},
\cite{ILP11}, \cite{CI12}, \cite{Batyrev2012}). It would be
interesting to see the relation between toric basic links between
smoothable toric Fano threefolds with canonical Gorenstein
singularities, basic links between smoothable Fano threefolds with
canonical Gorenstein singularities, their toric degenerations, and
geometry of their Landau--Ginzburg models (cf. \cite{Prz07}).

%The next result follows from~\cite{IV09}.

\begin{prop}[{\cite[Theorem~2.8]{IV09}}]
\label{proposition:Ilten}
Consider a Laurent polynomial
$p_1=xg_1g_2+g_3+g_4/x$, where $g_i$ are Laurent polynomials that do not depend on $x$.
Let $p_2=xg_1+g_3+g_2g_4/x$.
Let $T_i$ be a toric variety whose fan polytope is a Newton polytope of $p_i$.
Then $T_2$ deforms to $T_1$.
\end{prop}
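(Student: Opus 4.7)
The plan is to construct an explicit flat one-parameter family of toric varieties with $T_1$ as a special fiber and $T_2$ as the general fiber. This fits into Altmann's (and Mavlyutov's) framework of deformations of toric varieties induced by Minkowski decompositions of polytope slices, which is the machinery Ilten's proposition formalises.

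First I would analyse the Newton polytopes $\Delta_i = \mathrm{Newt}(p_i)$ slice by slice with respect to the $x$-grading. Writing lattice vectors as $(m,a)$ where $a$ is the exponent of $x$, both polytopes share the slice $\mathrm{Newt}(g_3)$ at height $a=0$. The polytope $\Delta_1$ has slice $\mathrm{Newt}(g_1) + \mathrm{Newt}(g_2)$ at $a=+1$ and $\mathrm{Newt}(g_4)$ at $a=-1$, whereas $\Delta_2$ has slice $\mathrm{Newt}(g_1)$ at $a=+1$ and $\mathrm{Newt}(g_2)+\mathrm{Newt}(g_4)$ at $a=-1$. Thus $\Delta_1$ and $\Delta_2$ differ exactly by sliding the Minkowski summand $\mathrm{Newt}(g_2)$ from the top slice down to the bottom slice, while keeping the total normalized volume fixed.

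Next I would exhibit the deformation explicitly by a one-parameter family of Laurent polynomials. A natural candidate is
$$P_t(x) \;=\; x\, g_1\, (g_2+t) \;+\; g_3 \;+\; \frac{g_2 g_4}{(g_2+t)\,x}.$$
Setting $t=0$ recovers $p_1$. For generic $t$, clearing the denominator $g_2+t$ and reparametrising $x\mapsto x/t$ in the limit $t\to\infty$ (or, equivalently, projectivising the family appropriately), one lands on a Laurent polynomial with Newton polytope $\Delta_2$. The total space is a complexity-one $T$-variety encoded by a polyhedral divisor on $\mathbb{P}^1$ whose generic slice is $\mathrm{Newt}(p_2)$ and whose special slice is $\mathrm{Newt}(p_1)$.

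The main obstacle is the verification of flatness together with the precise identification of the two fibers; this is the technical heart of Ilten's argument, which distinguishes an elementary polytopal mutation from an actual algebro-geometric deformation of schemes. Flatness follows once one checks that the slices $\Delta_t$ fit into a single polyhedral complex of constant normalized volume, which is the standing hypothesis of Altmann's deformation theorem and is guaranteed here by the Minkowski-sum structure identified above. Identifying the fibers then reduces to recognising the toric varieties associated to $\Delta_1$ and $\Delta_2$ as the distinguished fibers of this $T$-variety, yielding $T_2 \rightsquigarrow T_1$ as claimed.
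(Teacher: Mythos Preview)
The paper itself gives no proof of this proposition: it is simply quoted from \cite[Theorem~2.8]{IV09} and used as a black box. So there is nothing in the paper to compare your argument against, and your sketch should be judged on its own merits relative to what Ilten--Vollmert actually do.

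Your slice-by-slice analysis of the Newton polytopes is correct and is exactly the combinatorial input that drives the Ilten--Vollmert construction: the two polytopes differ by transferring the Minkowski summand $\mathrm{Newt}(g_2)$ from the $a=+1$ slice to the $a=-1$ slice. Your invocation of complexity-one $T$-varieties and Altmann-type deformations is also the right framework; that is precisely the language of \cite{IV09}.

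Where your sketch goes off track is the explicit interpolating family
\[
P_t(x)=x\,g_1(g_2+t)+g_3+\frac{g_2 g_4}{(g_2+t)\,x}.
\]
For generic $t$ this is not a Laurent polynomial at all, since $g_2+t$ sits in the denominator, and the ``clear the denominator and reparametrise $x\mapsto x/t$ as $t\to\infty$'' step is too vague to carry the weight of identifying the general fiber with $T_2$. The cleaner observation you are circling around is that $p_2$ is obtained from $p_1$ by the birational substitution $x\mapsto x/g_2$ (a \emph{mutation} in the sense of Akhtar--Coates--Galkin--Kasprzyk); but a birational change of variables on the torus is not by itself a flat family of the associated toric Fano varieties. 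Ilten--Vollmert do not attempt to interpolate on the level of Laurent polynomials. They work directly with polyhedral divisors on $\mathbb{P}^1$: one writes $T_1$ and $T_2$ as complexity-one $T$-varieties via the $x$-grading, observes that their $p$-divisors differ by moving one polyhedral coefficient, and then checks (this is the content of their Theorem~2.8) that such a move is realised by a one-parameter embedded deformation with flat total space. Flatness is verified on the level of the semigroup algebra, not by tracking Newton polytopes of an interpolating polynomial. If you want to repair your argument, drop $P_t$ and instead write down the polyhedral divisor explicitly and cite the flatness criterion from \cite{IV09}.
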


\begin{oss}
\label{remark:2 smoothings}
In~\cite[Example 2.3]{JR11}, Jahnke and Radloff considered an anticanonical cone over del Pezzo surface $S_6$
(the rightmost on 4th line of Figure~\ref{figure:del Pezzo tree}) of degree 6 and showed
that it has two smoothings, to $X_{2.32}$ and $X_{3.27}$. Notice that $S_6$ has 4 canonical toric degenerations; all of them are projections from $\PP^2$ and two of them, $S_6$ and $S_6''$ (the third on 4th line of Figure~\ref{figure:del Pezzo tree}), are projections of a smooth quadric surface. Cones over these varieties have fan polytopes numbers 155 and 121
(according to~\cite{fanosearch}) correspondingly. Two these polytopes are exactly ones having two Minkowski decompositions each of which
gives constant terms series for $X_{2.32}$ and $X_{3.27}$. So we have two smoothings corresponding to two pairs of Minkowski decomposition.
The question is why the existence of two deformations to two different varieties corresponds to the fact that the toric varieties are
projections from quadric surface.
\end{oss}

\begin{ex}
Consider a Laurent polynomial $$p_1=xy+xz+xyz+x/y+x/z+x+1/x.$$ One can prove that it is a toric Landau--Ginzburg model for $X_{2.35}$. Indeed,
one can directly check period and Calabi--Yau conditions. To prove toric condition one can observe that
$$p_1=x(z+z/y+1)(y+1/z)+1/x.$$ So, by Proposition~\ref{proposition:Ilten}, toric variety $T_{p_1}$ associated with $p_1$ can be deformed to
toric variety associated with $$p_2=x(z+z/y+1)+(y+1/z)/x$$ which after toric change of variables coincides with $f_{2}$. We get $F_2$ which
can be smoothed to $X_{2.35}$ by Theorem~\ref{theorem:toric table}.

A variety $T_{p_1}$ is nothing but a cone over a toric del Pezzo surface $S_7$ (the rightmost in the 3rd column of Figure~\ref{figure:del Pezzo tree}).
Consider a basic link --- projection from a smooth point on $T_{p_1}$. One get a toric variety --- cone over del Pezzo surface
$S_6$. It has two smoothings (see Remark~\ref{remark:2 smoothings}). Moreover, there are two toric Landau--Ginzburg models,
$$p_3=xy+xz+xyz+x/y+x/z+x/y/z+2x+1/x$$ and $$p_4=xy+xz+xyz+x/y+x/z+x/y/z+3x+1/x,$$ one for $X_{2.32}$ and another one for $X_{3.27}$.
Indeed, as before period and Calabi--Yau conditions can be checked directly. Notice that
$$p_3=x\left((yz+1)/z/y\right)(y+1)(z+1)+1/x$$ and $$p_4=x\left((yz+z+1)/y/z\right)(yz+y+1)+1/x.$$ After cluster change of variables
by Proposition~\ref{proposition:Ilten} one get two polynomials
associated with $F_3$ and $F_4$. These varieties by
Theorem~\ref{theorem:toric table} can be smoothed to $X_{2.32}$
and $X_{3.27}$. Nevertheless, the correct target for basic link
between the last two varieties is $X_{2.32}$, since projecting a
general $X_{2.32}$ from a point, we always obtain $X_{3.27}$.
\end{ex}

\begin{oss}
The same can be done with another variety, cone over $S_6''$ (the third in the forth column of Figure~\ref{figure:del Pezzo tree}).
Indeed, by Proposition~\ref{proposition:Ilten} a variety $S_6''$
is a degeneration of $S_6$ so cones over them degenerates as
well.
\end{oss}

%\section{Rank 1 Fano threefolds}

%\subsection{Toric Landau--Ginzburg models}

%We use Mirror Symmetry of variations of Hodge structures to construct dual Landau--Ginzburg models.
%According to Compactification Principle (\ref{principle:compactification}) fiberwise compactification of toric Landau--Ginzburg model
%satisfy Homological Mirror Symmetry conjecture. So we use this conjecture to study birational geometry of Fano varieties
%using compactifications of their toric Landau--Ginzburg models.

\section{Classical theory of Landau--Ginzburg models}
\label{subsubsection:MSVHS}
From now on we concentrate on the theory of Landau--Ginzburg models and their moduli.
First we recall the classical definition of Landau--Ginzburg model of a single Fano variety.
More precisely see, say,~\cite{Prz07} and references therein.
%We briefly describe the first mirror conjecture
%(its strong version); more precisely see, say,~\cite{Prz07} and references therein.
Let us have a smooth Fano variety $X$ of dimension $n$. We can associate \emph{a quantum cohomology ring} $QH^*(X)=H^*(X,\QQ)\otimes_\ZZ \Lambda$ to it, where $\Lambda$
is the Novikov ring for $X$. The multiplication in this ring, the so called \emph{quantum multiplication}, is given by \emph{(genus zero)
Gromov--Witten invariants} --- numbers counting rational curves lying on it. Given this data one can associate \emph{a regularized quantum differential operator} $Q_X$ (the second Dubrovin's connection) --- the regularization of an operator associated with connection in the trivial vector bundle
given by quantum multiplication by the canonical class $K_X$.
Solutions of an equation given by this operator are given by \emph{$I$-series} for $X$ --- generating series for its one-pointed Gromov--Witten invariants.
In particular, one ``distinguished'' solution is a constant term (with respect to cohomology) of $I$-series.
%The equation $Q_XI=0$ in ``good'' cases we need (for Fano threefolds or complete
%intersections) has a unique normalized analytic solution
Let us denote it by
$I=1+a_1t+a_2t^2+\ldots$.

\begin{definition}
\label{definition: toric LG}
\emph{A toric Landau--Ginzburg model} is a Laurent polynomial $f\in \CC[x_1^{\pm 1}, \ldots, x_n^{\pm}]$ such that:
\begin{description}
  \item[Period condition] The constant term of $f^i\in \CC[x_1^{\pm 1}, \ldots, x_n^{\pm}]$ is $a_i$ for any $i$ (this means that $I$ is a period of a family $f\colon (\CC^*)^n\to \CC$, see~\cite{Prz07}).
  \item[Calabi--Yau condition] There exists a fiberwise compactification (the so called \emph{Calabi--Yau compactification})
  whose total space is a smooth (open) Calabi--Yau variety.
%    Any fiber of $f\colon (\CC^*)^n\to \CC$ after some fiberwise compactification has trivial dualizing sheaf.
  \item[Toric condition] There is an embedded degeneration $X\rightsquigarrow T$ to a toric variety $T$ whose fan polytope
  (the convex hull of generators of its rays) coincides with the Newton polytope (the convex hull of non-zero coefficients) of $f$.
%A Laurent polynomial without the toric condition is called \emph{a weak Landau--Ginzburg model}.
\end{description}
\end{definition}

\begin{oss}
This notion can be extended to some non-smooth cases, see, for instance,~\cite{ALESS}.
\end{oss}

\begin{teo}[{\cite[Theorem 18]{Prz09} and~\cite[Theorem 3.1]{ILP11}}]
\label{theorem:rank 1 threefolds}
Smooth Fano threefolds of Picard rank 1 have toric Landau--Ginzburg models.
\end{teo}

\begin{oss}
Toric Landau--Ginzburg models for Picard rank 1 Fano threefolds are found in~\cite{Prz09}. However they are not unique.
Some of them coincide with ones from Table~\ref{table}. Anyway Theorem~\ref{theorem:rank 1 threefolds} holds for all threefolds
from Table~\ref{table}, see Theorem~\ref{theorem:toric table}.
\end{oss}

%\begin{teo}[Proposition 9 in~\cite{Prz09} and Theorem 2.2,~\cite{ILP11}]
%\label{theorem: complete intersections}
%Fano complete intersections have toric Landau--Ginzburg models.
%\end{teo}

%The period condition means that the periods of the family of fibers of the map $f\colon (\CC^*)^n\to \CC$ vanish $Q_X$, see, say,
%Proposition 2.5 in~\cite{Prz07}. In other words,
%its Picard--Fuchs differential operator coincides with $Q_X$. The Calabi--Yau condition goes back to the following principle.
%
%\begin{principle}[Compactification principle]
%\label{principle:compactification}
%The fiberwise compactification of the family of fibers of ``good'' toric Landau--Ginzburg model (defined up to flops) satisfies (B-side of)
%Homological Mirror Symmetry conjecture.
%\end{principle}
%
%This principle holds for rank 1 Fano threefolds. That is, if we assume that fibers of compactified toric Landau--Ginzburg model are
%Dolgachev--Nikulin dual to anticanonical sections of the initial Fano varieties (this holds for toric Landau--Ginzburg models we consider)
%then its fiberwise compactification is a unique family (defined up to flops). Thus if HMS for rank 1 Fano threefolds holds then
%(3-dimensional) Landau--Ginzburg models are compactifications of what we found.

\begin{teo}[\cite{DKLP}]
\label{theorem:Shioda-Inose}
Let $X$ be a Fano threefold of index $i$ and $(-K_X)^3=i^3k$. Then fibers of toric weak Landau--Ginzburg model from~\cite{Prz09}
can be compactified to Shioda--Inose surfaces with Picard lattice  $H\oplus E_8(-1)\oplus E_8(-1)+\langle -ik \rangle$.
\end{teo}

\begin{oss}
This theorem holds for toric Landau--Ginzburg models for Fano threefolds of Picard rank 1 from Table~\ref{table}.
\end{oss}

This theorem means that fibers of compactified toric Landau--Ginzburg models are mirrors of anticanonical sections
of corresponding Fano varieties, and this property determines compactified toric Landau--Ginzburg models uniquely
as the moduli spaces of possible mirror K3's are just $\PP^1$'s.

%Once more: fibers of Landau--Ginzburg model (of the same dimension) for Fano variety from HMS point of view are Calabi--Yau varieties. So Calabi--Yau condition
%vanishes obstructions for Compactification Principle of this type. Finally, toric condition is a generalization of Batyrev's principle for small toric degenerations.

So all above can be summarized to the following Mirror Symmetry conjecture.

\begin{conj}%[Strong version of Mirror Symmetry of variations of Hodge structures conjecture (MSVHS)]
\label{conjecture:MS}
Every smooth Fano variety has a toric Landau--Ginzburg model.
\end{conj}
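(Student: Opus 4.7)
My plan is to attack Conjecture~\ref{conjecture:MS} one deformation family at a time, using the basic-link structure developed in Section~\ref{section:classical} to propagate toric Landau--Ginzburg models outward from a small set of base cases. The strategy has three stages: building a base, propagating along basic links via Minkowski-type mutations, and verifying the three clauses of Definition~\ref{definition: toric LG} in each newly produced case.

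For the base, I would begin with projective spaces, weighted projective spaces, and smooth toric Fanos, where Givental's classical polynomial $f=x_1+\cdots+x_n+1/(x_1\cdots x_n)$ and its obvious generalisations furnish the required Laurent polynomial: the period condition follows from Givental's mirror theorem, the toric condition is automatic (the variety is already its own toric degeneration), and a Calabi--Yau compactification is obtained by the standard relative compactification inside a toric $\PP^n$-bundle. A parallel base layer in dimension three is supplied by Theorem~\ref{theorem:rank 1 threefolds} together with the explicit list in~\cite{Prz09}, which handles all Picard rank one smooth Fano threefolds and in particular all entries of Table~\ref{table}.

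To propagate, I would use Proposition~\ref{proposition:Ilten} to translate each basic link $X \dashrightarrow X'$ from Section~\ref{subsection:3dim basic links} into a mutation $f_X \rightsquigarrow f_{X'}$ of Laurent polynomials, in the style of the two explicit examples following Remark~\ref{remark:2 smoothings}. One writes $f_X$ in a factored form $xg_1g_2+g_3+g_4/x$ dictated by the geometric type (\SarSP, \SarDP, \SarL, or \SarC) of the basic link's center, and reads off the candidate $f_{X'}=xg_1+g_3+g_2g_4/x$; the proposition then guarantees that the toric varieties $T(f_{X'})$ and $T(f_X)$ deform to one another, delivering the toric clause for $X'$ as soon as one knows the matching smoothing diagram. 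The period clause for $f_{X'}$ is then checked by comparing constant terms of powers of $f_{X'}$ with the $I$-series of $X'$, whose coefficients one computes from the Gromov--Witten invariants of $X'$ (which are themselves accessible via the birational geometry supplied by the basic link). The Calabi--Yau clause is addressed fiberwise as in Theorem~\ref{theorem:Shioda-Inose}: one compactifies the generic fibre inside a toric ambient, resolves, and verifies triviality of the canonical class and mildness of the singularities of the total space.

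The hard part will be the Calabi--Yau compactification step away from dimension three. In dimension three the generic fibre is a K3 surface and its moduli is a $\PP^1$, so the compactification is essentially unique and explicit; in higher dimension one must simultaneously establish existence of a fiberwise compactification and vanishing of the canonical class of the resulting total space, a genuinely delicate birational problem. A second serious obstacle is that the classification of Fano manifolds, and of their toric degenerations, is incomplete beyond dimension three, so even the input to the Minkowski-mutation procedure is missing in general; one would either restrict Conjecture~\ref{conjecture:MS} to classified subclasses or supplement the strategy with an independent existence result for toric degenerations, e.g.\ via the Gross--Siebert program. Finally, in the absence of a Givental-style mirror theorem for the Fano at hand, verifying the period condition reduces to a nontrivial compatibility between Minkowski mutations and one-pointed Gromov--Witten invariants, which one would have to establish by induction along the basic-link tree built in Section~\ref{subsection:3dim basic links}.
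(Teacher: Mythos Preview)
The statement you are addressing is labeled a \emph{conjecture} in the paper, and the paper does not prove it. Immediately after stating Conjecture~\ref{conjecture:MS}, the authors write only that Theorem~\ref{theorem:rank 1 threefolds} verifies it for Fano threefolds of Picard rank~$1$ and that Theorem~\ref{theorem:toric table} verifies it for the specific varieties in Table~\ref{table}; the general case is left open. There is therefore no proof in the paper to compare your proposal against.

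What you have written is not a proof either, but a research programme, and you acknowledge as much in your final two paragraphs: the Calabi--Yau compactification step is wide open above dimension three, the classification of smooth Fano varieties is unknown beyond dimension three, toric degenerations are not known to exist in general, and the period condition in the absence of a Givental-type mirror theorem is a genuine open problem. These are not technical details to be filled in later; they are the substance of the conjecture. Your propagation-by-mutation idea is reasonable and is in fact close in spirit to how the paper handles the threefold cases it does treat (cf.\ Proposition~\ref{proposition:Ilten} and the examples surrounding it, and the proof of Theorem~\ref{theorem:toric table}), but it cannot be upgraded to a proof of Conjecture~\ref{conjecture:MS} without resolving those obstructions, and the paper does not claim otherwise.
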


%\begin{teo}[see~\cite{Prz09a}, Proposition 9 and Theorem 14 and \cite{ILP2011}, Theorem 2.2 and Theorem 3.1]
%This conjecture holds for Picard rank 1 Fano threefolds and complete intersections.
%\end{teo}

Theorem~\ref{theorem:rank 1 threefolds} %and~\ref{theorem: complete intersections}
shows that this conjecture holds for Fano threefolds of Picard rank 1.
%and Fano complete intersections
Theorem~\ref{theorem:toric table} shows that the conjecture holds for Fano varieties from Table~\ref{table}.

\subsection{The table}

Now we study toric Landau--Ginzburg models for Fano threefolds of Picard rank 1 given by toric basic links from $\PP^3$
and quadric. At first we give a table of such toric Landau--Ginzburg models and make some remarks concerning it. Then we prove (Theorem~\ref{theorem:toric table}) that Laurent polynomials listed in the table
are toric Landau--Ginzburg models of Fano threefolds.

Table~\ref{table} is organized as follows. $N$ is a number of a
variety in the table. Var. is a Fano smoothing numerated
following~\cite{IsPr99}. Deg. is a degree of a variety. Par. is a
number of variety(ies) giving our variety by a projection. BL is a
type of toric basic link(s). %(``Sarkisov links'' of type II with center
%in a smooth point (for varieties whose anticanonical divisor is
%divisible by 2, \SarSP), ordinary double point (\SarODP),
%line (\SarL), conic (\SarC), %twisted cubic (\SarT),
%and cDV point(\SarCDV)).
Desc. stands for descendants --- varieties that can be obtained by projection
from given variety.
The last column is a toric Landau--Ginzburg model for the variety.

\begin{scriptsize}
\begin{longtable}{||c|c|c|c|c|c|c|p{7cm}||}
%\caption{Weak Landau--Ginzburg models for Fano threefolds.}
  \hline
  % after \\: \hline or \cline{col1-col2} \cline{col3-col4} ...
  $N$ & $\mathrm{Var.}$ & $\mathrm{Deg}.$ & $\mathrm{Par.}$ & $\mathrm{BL}$ & $\mathrm{Desc.}$
  &
\begin{minipage}[c]{7cm}
\centering
\vspace{.1cm}

  Weak LG model

\vspace{.1cm}

\end{minipage}
\\
  \hline
  \hline
%  \endfirsthead
  1 & 1.17 & $2^3\cdot 8$ & $\emptyset$ & $\emptyset$ & 2
&
\begin{minipage}[c]{7cm}
\medskip
%\centering

$x+y+z+\frac{1}{xyz}$

\medskip
\end{minipage}
    \\
  \hline
  2 & 2.35 & $2^3\cdot 7$ & $1$ & \SarSP & 3
&
\begin{minipage}[c]{7cm}
\medskip
%\centering

$x+y+z+\frac{1}{xyz}+\frac{1}{x}$

\medskip
\end{minipage}
    \\
  \hline
  3 & 2.32 & $2^3\cdot 6$ & $2$ & \SarSP & 5, 9
&
\begin{minipage}[c]{7cm}
\medskip
%\centering

$x+y+z+\frac{1}{xyz}+\frac{1}{x}+\frac{1}{y}$

\medskip
\end{minipage}
    \\
  \hline
  4 & 3.27 & $2^3\cdot 6$ & $\emptyset$ & $\emptyset$ & 5
&
\begin{minipage}[c]{7cm}
\medskip
%\centering

$x+y+z+\frac{1}{x}+\frac{1}{y}+\frac{1}{z}$

\medskip
\end{minipage}
    \\
  \hline
  5 & 1.15 & $2^3\cdot 5$ & 3, 4 & \SarSP, \SarSP & 6
&
\begin{minipage}[c]{7cm}
\medskip
%\centering

$x+y+z+\frac{1}{xyz}+\frac{1}{x}+\frac{1}{y}+\frac{1}{z}$

\medskip
\end{minipage}
    \\
  \hline
  6 & 1.14 & $2^3\cdot 4$ & 5 & \SarSP &  7
&
\begin{minipage}[c]{7cm}
\medskip
%\centering

$x+y+z+\frac{1}{xyz}+\frac{2}{x}+\frac{1}{y}+\frac{1}{z}+\frac{yz}{x}$

\medskip
\end{minipage}
    \\
  \hline
  7 & 1.13 & $2^3\cdot 3$ & 6 & \SarSP & 8
&
\begin{minipage}[c]{7cm}
\medskip
%\centering

$x+y+2z+\frac{1}{xyz}+\frac{2}{x}+\frac{2}{y}+\frac{1}{z}+\frac{yz}{x}+\frac{xz}{y}$

\medskip
\end{minipage}
    \\
  \hline
  8 & 1.12 & $2^3\cdot 2$ & 7 & \SarSP & $\emptyset$  %2.8
&
\begin{minipage}[c]{7cm}
\medskip
%\centering

$2x+2y+2z+\frac{1}{xyz}+\frac{2}{x}+\frac{2}{y}+\frac{2}{z}+\frac{yz}{x}+\frac{xz}{y}+\frac{xy}{z}$

\medskip
\end{minipage}
    \\
  \hline
  9 & 3.24 & 42 & 3 & \SarC & 10
&
\begin{minipage}[c]{7cm}
\medskip
%\centering

$x+y+z+\frac{1}{xyz}+\frac{1}{x}+\frac{1}{y}+\frac{1}{yz}$

\medskip
\end{minipage}
    \\
  \hline
  10 & 4.9 & 40 & 9, 27 & \SarODP, \SarODP & 11
&
\begin{minipage}[c]{7cm}
\medskip
%\centering

$x+y+z+\frac{1}{xyz}+\frac{1}{x}+\frac{1}{y}+\frac{1}{yz}+\frac{1}{xy}$

\medskip
\end{minipage}
    \\
  \hline
  11 & 4.6 & 34 & 10 & \SarC & 12
&
\begin{minipage}[c]{7cm}
\medskip
%\centering

$x+y+z+\frac{1}{xyz}+\frac{1}{x}+\frac{1}{y}+\frac{1}{yz}+\frac{1}{xy}+yz$

\medskip
\end{minipage}
    \\
  \hline
  12 & 3.12 & 28 & 11 & \SarC & 13
&
\begin{minipage}[c]{7cm}
\medskip
%\centering

$x+y+z+\frac{1}{xyz}+\frac{1}{x}+\frac{1}{y}+\frac{1}{yz}+\frac{1}{xy}+yz+xy$

\medskip
\end{minipage}
    \\
  \hline
  13 & 3.10 & 26 & 12 & \SarODP & 14
&
\begin{minipage}[c]{7cm}
\medskip
%\centering

$x+y+z+\frac{1}{xyz}+\frac{1}{x}+\frac{1}{y}+\frac{1}{yz}+\frac{1}{xy}+yz+xy+\frac{1}{z}$

\medskip
\end{minipage}
    \\
  \hline
  14 & 4.1 & 24 & 13 & \SarODP\footnote{We make toric change of variables $\frac{y}{x}\to y$, $\frac{z}{y}\to z$.} & 15
&
\begin{minipage}[c]{7cm}
\medskip
%\centering

%$x+y+z+\frac{1}{xyz}+\frac{1}{x}+\frac{1}{y}+\frac{1}{yz}+\frac{1}{xy}+yz+xy+\frac{1}{z}+xyz$
$x+y+z+\frac{1}{x}+\frac{1}{y}+\frac{1}{z}+\frac{x}{y}+\frac{x}{z}+\frac{y}{x}+\frac{y}{z}+\frac{z}{x}+\frac{z}{y}$

\medskip
\end{minipage}
    \\
  \hline
  15 & 1.10 & 22 & 14 & \SarODP & 16
&
\begin{minipage}[c]{7cm}
\medskip
%\centering

%$x+y+z+\frac{1}{xyz}+\frac{1}{x}+\frac{1}{y}+\frac{1}{yz}+\frac{1}{xy}+yz+xy+\frac{1}{z}+xyz+\frac{1}{xz}$
$x+y+z+\frac{1}{x}+\frac{1}{y}+\frac{1}{z}+\frac{x}{y}+\frac{x}{z}+\frac{y}{x}+\frac{y}{z}+\frac{z}{x}+\frac{z}{y}+\frac{x}{yz}$

\medskip
\end{minipage}
%    \\
%  \hline
%  16 & 2.12 & 20 & 15 & \SarODP & 18
%&
%\begin{minipage}[c]{7cm}
%\medskip
%%\centering
%
%%$x+y+z+\frac{1}{xyz}+\frac{1}{x}+\frac{1}{y}+\frac{1}{yz}+\frac{1}{xy}+yz+xy+\frac{1}{z}+xyz+\frac{1}{xz}+xy$
%$x+y+z+\frac{1}{x}+\frac{1}{y}+\frac{1}{z}+\frac{x}{y}+\frac{x}{z}+\frac{y}{x}+\frac{y}{z}+\frac{z}{x}+\frac{z}{y}+\frac{z}{xy}+\frac{xy}{z}$
%
%\medskip
%\end{minipage}
    \\
  \hline
  16 & 2.13 & 20 & 15 & \SarODP & 17
&
\begin{minipage}[c]{7cm}
\medskip
%\centering

%$x+y+z+\frac{1}{xyz}+\frac{1}{x}+\frac{1}{y}+\frac{1}{yz}+\frac{1}{xy}+yz+xy+\frac{1}{z}+xyz+\frac{1}{xz}$ $+xy+xy^2z$
$x+y+z+\frac{1}{x}+\frac{1}{y}+\frac{2}{z}+\frac{x}{y}+\frac{x}{z}+\frac{y}{x}+\frac{y}{z}+\frac{z}{x}+\frac{z}{y}+\frac{x}{yz}+\frac{y}{xz}$

\medskip
\end{minipage}
    \\
  \hline
  17 & 1.9 & 18 & 16 & \SarODP & 18
&
\begin{minipage}[c]{7cm}
\medskip
%\centering

%$x+y+z+\frac{1}{xyz}+\frac{1}{x}+\frac{1}{y}+\frac{1}{yz}+\frac{1}{xy}+yz+xy+\frac{1}{z}+xyz+\frac{1}{xz}$ $+xy+xy^2z+xy^2z^2$
$x+y+z+\frac{2}{x}+\frac{2}{y}+\frac{2}{z}+\frac{x}{y}+\frac{x}{z}+\frac{y}{x}+\frac{y}{z}+\frac{z}{x}+\frac{z}{y}+\frac{x}{yz}$
$+\frac{y}{xz}+\frac{z}{xy}$

\medskip
\end{minipage}
    \\
  \hline
  18 & 1.8 & 16 & 17 & \SarCDV & 19
&
\begin{minipage}[c]{7cm}
\medskip
%\centering

%$x+y+z+\frac{1}{xyz}+\frac{1}{x}+\frac{1}{y}+\frac{1}{yz}+\frac{1}{xy}+yz+xy+\frac{1}{z}+xyz+\frac{1}{xz}$ $+xy+xy^2z+xy^2z^2+\frac{1}{xy^2z}$
%$2x+y+z+\frac{1}{x}+\frac{2}{y}+\frac{1}{z}+\frac{2x}{y}+\frac{2x}{z}+\frac{y}{x}+\frac{y}{z}+\frac{z}{x}+\frac{2z}{y}+\frac{z}{xy}$
%$+\frac{x}{yz}+\frac{xy}{z}+\frac{xz}{y}$
$x+y+z+\frac{3}{x}+\frac{3}{y}+\frac{3}{z}+\frac{x}{y}+\frac{x}{z}+\frac{y}{x}+\frac{y}{z}+\frac{z}{x}+\frac{z}{y}+\frac{x}{yz}$
$+\frac{y}{xz}+\frac{z}{xy}+\frac{1}{xyz}+\frac{2}{xy}+\frac{2}{yz}+\frac{2}{xz}$

\medskip
\end{minipage}
    \\
  \hline
  19 & 1.7 & 14 & 18 & \SarODP & 20
&
\begin{minipage}[c]{7cm}
\medskip
%\centering

%$2x+2y+2z+\frac{1}{x}+\frac{2}{y}+\frac{1}{z}+\frac{2x}{y}+\frac{2x}{z}+\frac{y}{x}+\frac{y}{z}+\frac{2z}{x}+\frac{2z}{y}$
%$+\frac{z}{xy}+\frac{x}{yz}+\frac{xy}{z}+\frac{xz}{y}+\frac{yz}{x}$
$x+y+z+\frac{3}{x}+\frac{3}{y}+\frac{4}{z}+\frac{x}{y}+\frac{2x}{z}+\frac{y}{x}+\frac{2y}{z}+\frac{z}{x}+\frac{z}{y}+\frac{x}{yz}$
$+\frac{y}{xz}+\frac{z}{xy}+\frac{1}{xyz}+\frac{2}{xy}+\frac{2}{yz}+\frac{2}{xz}+\frac{xy}{z}$

\medskip
\end{minipage}
    \\
  \hline
  20 & 1.6 & 12 & 19 & \SarODP & 21
&
\begin{minipage}[c]{7cm}
\medskip
%\centering

%$2x+2y+2z+\frac{2}{x}+\frac{2}{y}+\frac{2}{z}+\frac{2x}{y}+\frac{2x}{z}+\frac{2y}{x}+\frac{2y}{z}+\frac{2z}{x}+\frac{2z}{y}$
%$+\frac{z}{xy}+\frac{x}{yz}+\frac{y}{xz}+\frac{xy}{z}+\frac{xz}{y}+\frac{yz}{x}$
$2x+y+z+\frac{3}{x}+\frac{4}{y}+\frac{4}{z}+\frac{2x}{y}+\frac{2x}{z}+\frac{y}{x}+\frac{2y}{z}+\frac{z}{x}+\frac{2z}{y}+\frac{x}{yz}$
$+\frac{y}{xz}+\frac{z}{xy}+\frac{1}{xyz}+\frac{2}{xy}+\frac{2}{yz}+\frac{2}{xz}+\frac{xy}{z}+\frac{xz}{y}$

\medskip
\end{minipage}
    \\
  \hline
  21 & 1.5 & 10 & 20 & \SarODP & 22
&
\begin{minipage}[c]{7cm}
\medskip
%\centering

%$4x+4y+4z+\frac{2}{x}+\frac{2}{y}+\frac{2}{z}+\frac{2x}{y}+\frac{2x}{z}+\frac{2y}{x}+\frac{2y}{z}+\frac{2z}{x}+\frac{2z}{y}$
%$+\frac{z}{xy}+\frac{x}{yz}+\frac{y}{xz}+\frac{xy}{z}+\frac{xz}{y}+\frac{yz}{x}+xyz+2xy+2xz$ $+2yz$
$2x+2y+2z+\frac{4}{x}+\frac{4}{y}+\frac{4}{z}+\frac{2x}{y}+\frac{2x}{z}+\frac{2y}{x}+\frac{2y}{z}+\frac{2z}{x}+\frac{2z}{y}$
$+\frac{x}{yz}+\frac{y}{xz}+\frac{z}{xy}+\frac{1}{xyz}+\frac{2}{xy}+\frac{2}{yz}+\frac{2}{xz}+\frac{xy}{z}+\frac{xz}{y}+\frac{yz}{x}$

\medskip
\end{minipage}
    \\
  \hline
  22 & 1.4 & 8 & 21 & \SarCDV & 23
&
\begin{minipage}[c]{7cm}
\medskip
%\centering

%$4x+4y+4z+\frac{2}{x}+\frac{2}{y}+\frac{2}{z}+\frac{2x}{y}+\frac{2x}{z}+\frac{2y}{x}+\frac{2y}{z}+\frac{2z}{x}+\frac{2z}{y}$
%$+\frac{z}{xy}+\frac{x}{yz}+\frac{y}{xz}+\frac{xy}{z}+\frac{xz}{y}+\frac{yz}{x}+xyz+2xy+2xz$ $+2yz$
$4x+4y+4z+\frac{4}{x}+\frac{4}{y}+\frac{4}{z}+\frac{2x}{y}+\frac{2x}{z}+\frac{2y}{x}+\frac{2y}{z}+\frac{2z}{x}+\frac{2z}{y}$
$+\frac{x}{yz}+\frac{y}{xz}+\frac{z}{xy}+\frac{1}{xyz}+\frac{2}{xy}+\frac{2}{yz}+\frac{2}{xz}+\frac{xy}{z}+\frac{xz}{y}+\frac{yz}{x}$
$+xyz+2xy+2xz+2yz$

\medskip
\end{minipage}
    \\
  \hline
  23 & 1.3 & 6 & 22 & \SarCDV\footnote{Under toric change of variables.} & 24
&
\begin{minipage}[c]{7cm}
\medskip
%\centering

$\frac{(y+z+1)^2((y+z+1)^2+2x(y+z+1)+x^2)}{xyz}-12$

\medskip
\end{minipage}
    \\
  \hline
  24 & 1.2 & 4 & 23 & \SarCDV & $\emptyset$
&
\begin{minipage}[c]{7cm}
\medskip
%\centering

$\frac{(x+y+z+1)^4}{xyz}$

\medskip
\end{minipage}
    \\
  \hline
%  25 & 1.1 & 2 & 24?? & ?? & $\emptyset$
%&
%\begin{minipage}[c]{7cm}
%\medskip
%%\centering
%
%%$\frac{(x+y+z+1)^6}{xyz}$
%???
%
%\medskip
%\end{minipage}
%    \\
%  \hline
  25 & 1.16 & 54 & $\emptyset$ & $\emptyset$ & 26
&
\begin{minipage}[c]{7cm}
\medskip
%\centering

$x+y+z+\frac{1}{xy}+\frac{1}{yz}$

\medskip
\end{minipage}
    \\
  \hline
%  27 & 3.31 & 52 & 27 & \SarODP & 28
  26 & 2.30 & 46 & 27 & \SarSP & 27
&
\begin{minipage}[c]{7cm}
\medskip
%\centering

%$x+y+z+\frac{1}{xy}+\frac{1}{yz}+\frac{1}{y}$
$x+y+z+\frac{1}{xy}+\frac{1}{yz}+\frac{1}{xyz}$

\medskip
\end{minipage}
    \\
  \hline
%  28 & 4.11 & 44 & 28 & \SarT & 10
  27 & 3.23 & 42 & 28 & \SarL & 10
&
\begin{minipage}[c]{7cm}
\medskip
%\centering

%$x+y+z+\frac{1}{xy}+\frac{1}{yz}+\frac{1}{y}+\frac{1}{x}$
$x+y+z+\frac{1}{xy}+\frac{1}{yz}+\frac{1}{xyz}+\frac{1}{x}$

\medskip
\end{minipage}
\\
  \hline
  \hline
\caption[]{\label{table}{\normalsize Weak
      Landau--Ginzburg models for
    Fano threefolds.}}
\end{longtable}
\end{scriptsize}

\begin{oss}
$F_2$ is a blow-up of $F_1=\PP^3$ at one point with an exceptional divisor $E$. $F_3=X_{2.35}$ is a projection from a point lying far from $E$.
If we project from a point lying on $E$ we get another (singular) variety, $F_3^\prime$, with corresponding weak Landau--Ginzburg model
$$
x+y+z+\frac{1}{xyz}+\frac{2}{x}+\frac{yz}{x}.
$$
\end{oss}

\begin{oss}
The variety {\bf 24}, the toric quartic, has no cDV points or smooth toric lines. So we can not proceed to make basic links.
However it has 4 singular canonical (triple) points and we can project from any of them. In other words, we can project quartic
$$
\{x_1x_2x_3x_4=x_0^4\}\subset \PP[x_0,x_1,x_2,x_3,x_4]
$$
from the point, say, $(0:0:0:0:1)$. Obviously we get the variety {\bf 1}, that is $\PP^3$ again. %(Lenin's spiral of projections).
\end{oss}

\begin{prop}
\label{proposition:CY}
Families of hypersurfaces in $(\CC^*)^3$ given by Laurent polynomials from Table~\ref{table} can be fiberwise compactified to (open) Calabi--Yau varieties.
\end{prop}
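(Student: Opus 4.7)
My plan is to construct the Calabi--Yau compactification uniformly via a pencil of anticanonical divisors inside an ambient Gorenstein toric Fano threefold, and then resolve crepantly. First I would observe that for each Laurent polynomial $f_k$ in Table~\ref{table}, the Newton polytope $\Delta_k$ is a reflexive three-dimensional lattice polytope. This is essentially built into the construction: by the toric condition in Definition~\ref{definition: toric LG}, $\Delta_k$ is the fan polytope of the (weak) Gorenstein Fano toric threefold $F_k$, and reflexivity of the fan polytope is equivalent to $F_k$ being Gorenstein. For the polynomials arising from $\PP^3$ and the quadric via the toric basic links of Section~\ref{subsection:3dim basic links}, reflexivity is preserved under the combinatorial operations corresponding to each link, so this can also be checked inductively along the arrows of Figure~\ref{figure:Fano snake}.

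Let $T_k^{\circ}$ denote the Gorenstein toric Fano threefold whose fan polytope is the polar dual $\Delta_k^{\ast}$. Then the lattice points of $\Delta_k$ are exactly the monomial sections of $-K_{T_k^{\circ}}$, so $f_k$ and the constant function $1$ both extend to sections of $\cO_{T_k^{\circ}}(-K_{T_k^{\circ}})$. Consider the pencil of anticanonical divisors $\{f_k-\lambda\cdot 1=0\}_{\lambda\in\PP^1}$ on $T_k^{\circ}$ and take $Y_k\subset T_k^{\circ}\times\PP^1$ to be the closure of the graph of $f_k\colon(\CC^{\ast})^3\to\CC$. Equivalently, $Y_k$ is the blowup of $T_k^{\circ}$ along the base locus $B_k=\{f_k=0\}\cap D$, where $D\subset T_k^{\circ}$ is the toric boundary. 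Standard adjunction for a pencil of anticanonical divisors gives $\omega_{Y_k}\cong\cO_{Y_k}$ on the smooth locus, and over $\mathbb{A}^1\subset\PP^1$ the projection $Y_k\to\PP^1$ restricts to a proper compactification of $f_k\colon(\CC^{\ast})^3\to\CC$ since its complement in $Y_k$ is the fibre over $\infty$, which is exactly $D$ (blown up along $B_k$).

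To upgrade this to a \emph{smooth} open Calabi--Yau, I would resolve crepantly. The singularities of $Y_k$ come from two sources: canonical Gorenstein toric singularities of $T_k^{\circ}$, and singularities of $Y_k$ along (and transverse to) the base locus $B_k$. In dimension three, Gorenstein toric singularities admit projective crepant (MPCP) resolutions obtained from any maximal coherent unimodular triangulation of $\Delta_k^{\ast}$; and a generic pencil of anticanonical sections of a three-dimensional Gorenstein toric Fano has only mild (Du~Val or simpler) indeterminacy along $B_k$ after toric resolution, which can again be resolved crepantly because the ambient variety is crepantly smoothable. Performing these resolutions only on the central fibre $\pi^{-1}(\infty)$ and along $B_k$ preserves the generic fibre, and restricting to $\pi^{-1}(\mathbb{A}^1)$ yields the desired smooth open Calabi--Yau total space.

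The main obstacle is the crepancy of the resolution along $B_k$: while existence of MPCP resolutions of $T_k^{\circ}$ itself is standard, verifying that the blowup at $B_k$ (together with the induced singularities of the total space) admits a simultaneous crepant resolution has to be checked for each line of Table~\ref{table}. In practice this reduces to a local toric computation at each node of the base curve, and is compatible with Theorem~\ref{theorem:Shioda-Inose}, which already guarantees that the generic fibre smoothly compactifies to a Shioda--Inose K3 surface; the latter provides a strong consistency check, since a smooth CY total space with smooth K3 generic fibre forces the pencil to be, generically, a Lefschetz pencil, which in turn controls the local models at degenerate fibres. I would organize the case analysis along the arrows of Figure~\ref{figure:Fano snake} so that the compactification for the descendant is obtained from that of its parent by tracking the blowup of one additional toric stratum dictated by the basic link, thereby reducing the $27$ cases to verifying the inductive step.
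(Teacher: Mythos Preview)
Your approach is correct in spirit but takes a genuinely different route from the paper. You compactify each family as a pencil of anticanonical divisors in the polar-dual Gorenstein toric Fano $T_k^\circ$ and then appeal to MPCP-type crepant resolutions; the paper instead uses a single, entirely concrete ambient. Namely, it compactifies $\{f_k=\lambda\}$ fibrewise in $\PP^3$ via the naive embedding $\Spec\CC[x^{\pm1},y^{\pm1},z^{\pm1}]\hookrightarrow\Proj\CC[x,y,z,t]$: multiply through by the denominator $xyz$ and homogenise with $t$ (after the toric substitution $xy\to x$, $yz\to z$ for entries {\bf 11}--{\bf 13}). One obtains a family of singular quartic surfaces in $\PP^3$, so triviality of the canonical class of the total threefold is immediate by adjunction, and the paper then asserts directly that this threefold is du~Val along lines together with isolated ordinary double points, whence a crepant resolution exists by blowing up the singular lines and taking small resolutions of the nodes.

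What each approach buys: your Batyrev-style compactification is conceptually uniform and would apply to any reflexive Newton polytope, but, as you yourself flag, it leaves the crepant resolution along the base locus $B_k$ as a genuine case analysis across many different singular toric ambients, and your proposed induction along Figure~\ref{figure:Fano snake} is still only an organisational device for that analysis. The paper's trick exploits a feature specific to this table---after the indicated substitutions the Newton polytopes sit inside the degree-four simplex---so that everything is a quartic in $\PP^3$, the fibres are visibly K3, and the singularity check becomes a single explicit verification in one fixed ambient rather than $27$ separate toric computations.
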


\begin{proof}
Let $f$ be a Laurent polynomials from the table.
Compactify the corresponding family $\{f=\lambda\}\subset \Spec \CC[x^{\pm 1}, y^{\pm 1}, z^{\pm 1}]\times \Spec \CC[\lambda]$ fiberwise
using standard embedding $\Spec \CC[x^{\pm 1}, y^{\pm 1}, z^{\pm 1}] \subset \Proj \CC[x, y, z, t]$. In other words, multiply it by a denominator
($xyz$) and add an extra homogenous variable $t$. For varieties {\bf 11, 12, 13} do toric change of variables $xy\to x$, $yz\to z$. We get a
family of singular quartics. Thus it has trivial canonical class. Singularities of the threefold we get are du Val along lines and ordinary
double points; the same type of singularities holds after crepant blow-ups of singular lines and small resolutions of ordinary double points.
Thus the threefold admit a crepant resolution; this resolution is a Calabi--Yau compactification we need.
\qed\end{proof}

\begin{prop}
\label{proposition:toric}
Toric varieties from Table~\ref{table} are degenerations of corresponding Fano varieties.
\end{prop}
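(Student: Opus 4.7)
The plan is row-by-row verification. For each entry in Table~\ref{table} the Newton polytope $\Delta_k$ of $f_k$ is a reflexive lattice polytope in $\RR^3$, and $F_k$ is the Gorenstein toric Fano threefold it defines. I would first identify each $F_k$ explicitly by computing the vertices of $\Delta_k$ and matching against the Kreuzer--Skarke/Coates--Corti--Galkin--Kasprzyk database of three-dimensional reflexive polytopes (see \cite{Br}, \cite{fanosearch}). Many Picard-rank-one rows then correspond to toric degenerations already recorded in the literature: Galkin's list of toric degenerations of smooth Fano threefolds of Picard rank one, Batyrev's degenerations \cite{Batyrev2004}, and the systematic treatment in \cite{ILP11} and \cite{Prz09}. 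For the higher-Picard-rank rows, the identification is made against the smoothing components listed in \cite{fanosearch}.

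Second, I would exploit the fact that the table is generated by the toric basic links of Section~\ref{subsection:3dim basic links}, which by construction relate successive polynomials via the cluster-type transformations of Proposition~\ref{proposition:Ilten}: a polynomial of the form $xg_1g_2+g_3+g_4/x$ and its partner $xg_1+g_3+g_2g_4/x$ produce toric varieties in the same flat family, so one degenerates to the other. The worked example preceding the statement illustrates exactly this bridge for rows $2$, $3$, and $27$, deriving $f_2$, $f_3$, $f_4$ via cluster substitutions. Iterating this transformation along the arrows of Table~\ref{table} propagates the claim from the roots (where $F_1=\PP^3$ and the quadric are tautologically their own degenerations) to every other entry. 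For the exceptional rows $23$ and $24$, whose polynomials $(y+z+1)^2((y+z+1)^2+2x(y+z+1)+x^2)/(xyz)-12$ and $(x+y+z+1)^4/(xyz)$ are presented in non-Laurent form, a direct toric change of variables (as already footnoted in the table) brings each polynomial into a standard form whose Newton polytope gives the classical toric degeneration of the sextic and quartic double solids $X_3$ and $X_2$.

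The main obstacle is the non-uniqueness phenomenon of Remark~\ref{remark:2 smoothings}: a given reflexive polytope can admit several non-isomorphic smoothings, so the Newton polytope alone does not determine the target of the degeneration. Rather, it is the Minkowski decomposition recorded by the specific Laurent polynomial $f_k$ that selects the smoothing component. Therefore the delicate step is to confirm, in each ambiguous case (for instance the cones over $S_6$ and $S_6''$ discussed above, which degenerate to both $X_{2.32}$ and $X_{3.27}$), that the particular $f_k$ appearing in the table picks out the Fano variety in column \textbf{Var.} and not a competing one. This verification is done by pairing with the period/constant-term computations underlying Proposition~\ref{proposition:CY} and with the $I$-series data of the listed Fano threefold; in practice the proof of Proposition~\ref{proposition:toric} is carried out in tandem with Theorem~\ref{theorem:toric table}, where the period and Calabi--Yau conditions pin down the smoothing component uniquely.
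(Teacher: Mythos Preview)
Your proposal has a genuine gap in its second paragraph. Proposition~\ref{proposition:Ilten} (Ilten--Vollmert) tells you that two toric varieties related by the cluster-type transformation lie in one flat family, i.e.\ one \emph{toric} variety degenerates to another \emph{toric} variety. It does not tell you that either of them is a degeneration of a specified smooth Fano threefold. The basic links of Section~\ref{subsection:3dim basic links} are \emph{birational} maps between distinct Fano varieties, not deformations; so you cannot ``propagate'' the claim along the arrows: knowing $F_1=\PP^3$ and that $F_2$ deforms to $F_1$ gives you no degeneration $X_{2.35}\rightsquigarrow F_2$. Your third paragraph then makes the argument circular: Theorem~\ref{theorem:toric table} is assembled from Proposition~\ref{proposition:toric} together with the period and Calabi--Yau checks, so invoking period data to pin down the smoothing component \emph{inside} the proof of Proposition~\ref{proposition:toric} begs the question. (The worked example you cite in fact calls forward to Theorem~\ref{theorem:toric table}; it is an illustration, not the mechanism of proof.)

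The paper's proof is a case split of a rather different flavour, and the non-uniqueness issue is resolved by numerical invariants rather than periods. For rows \textbf{1--5}, \textbf{9--15}, \textbf{25--27} the toric varieties are terminal Gorenstein, hence smoothable by Namikawa~\cite{Na97}; the smoothing is identified with the listed Fano by matching numerical invariants via~\cite[Corollary~3.27]{Ga}, which leaves a unique candidate. Rows \textbf{6--8} are handled by exhibiting $F_k$ as a binomial complete intersection in a (weighted) projective space and smoothing the equations (\cite[Theorem~2.2]{ILP11}). Row \textbf{16} is treated by an explicit embedding of $F_{16}$ into the Segre variety $\PP^2\times\PP^4$ cut by binomial divisors of types $(1,1)$, $(1,1)$, $(0,2)$. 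Rows \textbf{17--18} cite~\cite{Prz09} and~\cite[Theorem~3.1]{ILP11}; row \textbf{19} uses a triangulation argument and~\cite[Corollary~3.4]{CI11}; rows \textbf{20--24} appeal to~\cite{CI12}. None of these steps uses Proposition~\ref{proposition:Ilten} or the basic-link structure of the table.
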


\begin{proof}
Varieties {\bf 1--5, 9--15, 25--27} are terminal Gorenstein toric Fano threefolds. So, by~\cite{Na97} they can be smoothed.
By~\cite[Corollary~3.27]{Ga} smoothings are Fanos with the same numerical invariants as the initial toric varieties.
The only smooth Fano threefolds with given invariants are listed at the second column. In other words,
the statement of the proposition for varieties {\bf 1--5, 9--15, 26--28} follows from the proof of~\cite[Theorem~2.7]{Ga}.

Varieties {\bf 6--8} are complete intersections in (weighted) projective spaces. We can write down the dual polytopes
to their fan polytopes. The equations of the toric varieties correspond to homogenous relations on integral points
of the dual polytopes. %(cf. Remark~\ref{remark: toric equations}).
One can see that the relations are binomials
defining corresponding complete intersections and the equations of a Veronese map $v_2$. So the toric varieties can be smoothed to
the corresponding complete intersections. For more details see~\cite[Theorem~2.2]{ILP11}.

%Variety {\bf 16} is smoothable.
Variety $X_{2.13}$ can be described as a section of $\PP^2\times \PP^4$ by divisors of type $(1,1)$, $(1,1)$, and $(0,2)$ (see, say,~\cite{fanosearch}).
Equations of $\PP^2\times \PP^4$ in Segre embedding can be described as all $(2\times 2)$-minors of a matrix
$$
\left(
  \begin{array}{ccccc}
    x_{00} & x_{01} & x_{02} & x_{03} & x_{04} \\
    x_{10} & x_{11} & x_{12} & x_{13} & x_{14} \\
    x_{20} & x_{21} & x_{22} & x_{23} & x_{24} \\
  \end{array}
\right).
$$
Consider its section $T$ given by equations $x_{00}=x_{11}$, $x_{11}=x_{22}$, $x_{01}x_{02}=x_{03}x_{04}$. These equations
give divisors of types $(1,1)$, $(1,1)$, and $(0,2)$ respectively. They are binomial, which means that $T$ is a toric variety.
The equations giving variety {\bf 16} are homogenous integral relations on integral points of a polytope dual to
a Newton polytope of $f_{16}$. % (cf. Remark~\ref{remark: toric equations}).
It is easy to see that these relations are exactly ones defining $T$. Thus $T=F_{16}$ and $F_{16}$ can be smoothed to $X_{2.13}$.
% $x_{00}=(0,0,0), x_{01}=(-1,-1,-1), x_{02}=(0,-1,0), x_{03}=(-1,-1,0), x_{04}=(0,-1,-1)$.

Varieties {\bf 17} and {\bf 18} correspond to ones from~\cite{Prz09}. Thus, by~\cite[Theorem~3.1]{ILP11} they can be smoothed to corresponding Fano threefolds.

The dual polytope to the fan polytope for variety {\bf 19} is drawn on Figure~\ref{figure:V14}.

\begin{figure}[htb]
\centering
\includegraphics[width=5cm]{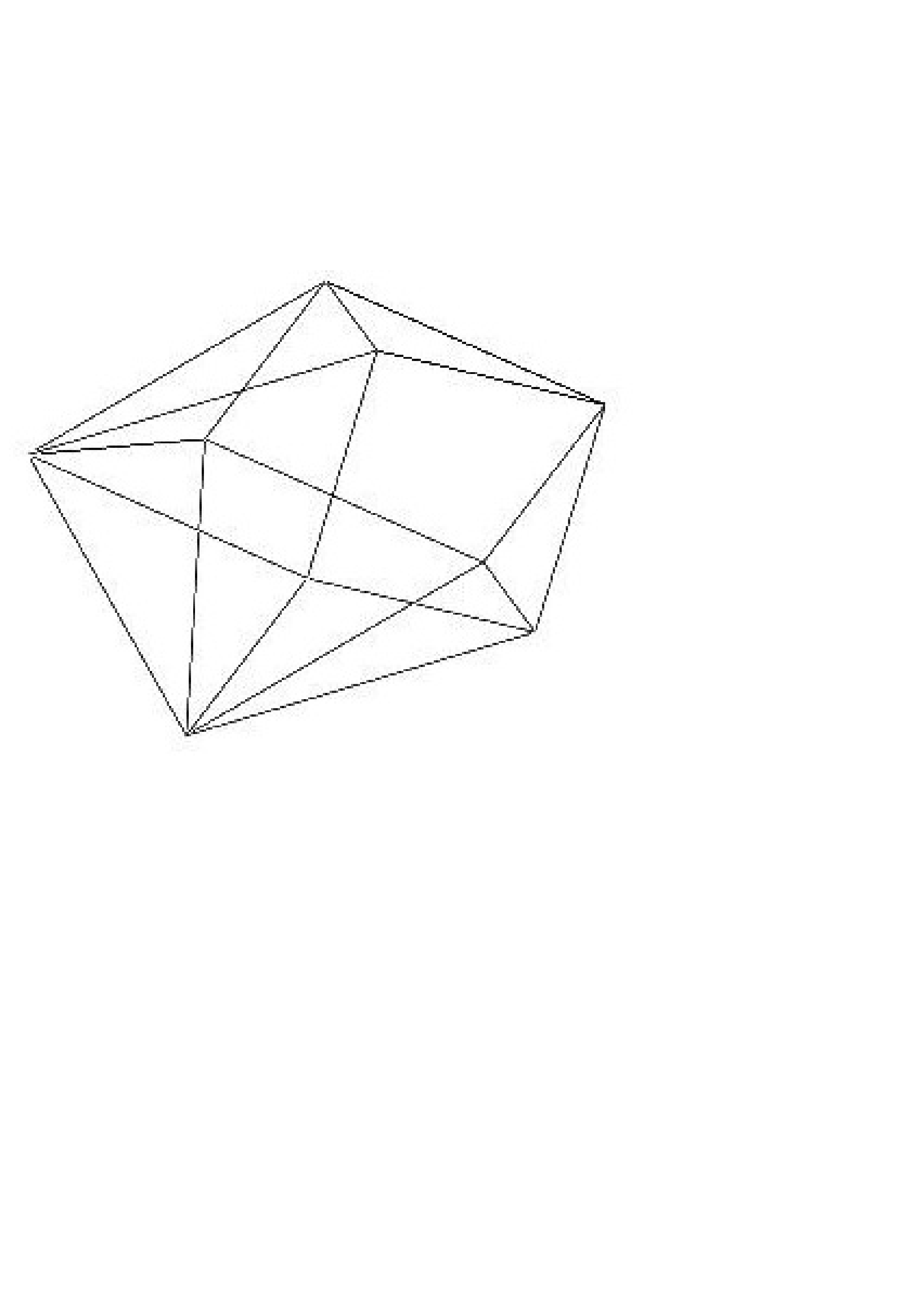}
\caption{Polytope defining variety {\bf 19}.}\label{figure:V14}
\end{figure}

It obviously has a triangulation on 14 triangles satisfying conditions of~\cite[Corollary~3.4]{CI11}. By this corollary the variety~{\bf 19} can be
smoothed to the variety we need.

Finally the existence of smoothings of varieties~{\bf 20--24} to corresponding Fano varieties follows from~\cite{CI12}.
\qed\end{proof}

\begin{teo}
\label{theorem:toric table}
Laurent polynomials from Table~\ref{table} are toric Landau--Ginzburg models for corresponding Fano threefolds.
\end{teo}

\begin{proof}
Period condition follows from direct computations, see~\cite{fanosearch}. Calabi--Yau condition holds by Proposition~\ref{proposition:CY}.
Toric condition holds by Proposition~\ref{proposition:toric}.
\qed\end{proof}

\section{Landau--Ginzburg considerations}
\label{section:discussion}

\subsection{Categorical background}

The examples in the  previous two sections suggest a new approach to birational  geometry of Fano manifolds. This approach amounts to studying all Fano manifolds together.  In this section we summarize this approach  and  give technical tolls for using it. We proceed by extending Kawamata's approach described in Table~\ref{tab:CPINTRO}. We add additional data to categorical approach recorded in the geometry of the  moduli space of Landau--Ginzburg models. The  main points are:
\begin{enumerate}
  \item There exists a  moduli space of Landau--Ginzburg models for many (possibly all)  3-dimensional Fano manifolds.
  \item
The topology of this compactified moduli space of  Landau--Ginzburg models
determines Sarkisov links  among these Fano manifolds. In fact we conjecture that geometry of  moduli space of Landau--Ginzburg models gives answers to many questions related to rationality and birational equivalence --- we suggest some invariants and give examples.
\end{enumerate}

The geometry of the  moduli space of Landau--Ginzburg models was introduced in
\cite{KKP}, \cite{DKK1}, and \cite{DKK2} as analogy with Nonabelian Hodge theory. We  describe this analogy.
%--- see table below.
We build the ``twistor'' family so that the fiber over zero is the ``moduli space''  of Landau--Ginzburg models and the generic fiber is the Stability Hodge Structure (see below).

Non-commutative Hodge theory endows the cohomology groups of a dg-category with additional linear data --- the non-commutative Hodge structure --- which
records important information about the geometry of the
category. However, due to their linear nature, non-commutative Hodge
structures are not sophisticated enough to codify the full
geometric information hidden in a dg-category. In view of the
homological complexity of such categories it is clear that only a
subtler non-linear Hodge theoretic entity can adequately capture the
salient features of such categorical or non-commutative geometries. In this section by analogy with ``classical nonabelian Hodge theory''  we  construct and  study from such an prospective a new type of entity of exactly such
type --- the Stability Hodge Structure (SHS) associated with a dg-category.

As the name suggests, the SHS of a category is related to the
Bridgeland stabilities on this category.  The moduli space ${\sf
  Stab}_{C}$ of stability conditions of a triangulated dg-category
$C$, is in general, a complicated curved space, possibly with
fractal boundary. In the special case when $C$ is the Fukaya
category of a Calabi--Yau threefold, the space ${\sf Stab}_{C}$ admits
a natural one-parameter specialization to a much simpler space ${\sf
  S}_{0}$. Indeed, HMS predicts that the moduli space of complex
structures on the mirror Calabi--Yau threefold maps to a Lagrangian
subvariety ${\sf Stab}^{\text{geom}}_{C} \subset {\sf Stab}_{C}$.
%The
%idea is now to linearize ${\sf Stab}_{C}$ along ${\sf
%  Stab}^{\text{geom}}_{C}$, i.e. to replace ${\sf Stab}_{C}$ with a
%certain discrete quotient ${\sf S}_{0}$ of the total space of the
%normal bundle of ${\sf Stab}^{\text{geom}}_{C}$ in ${\sf
%  Stab}_{C}$. Specifically, by scaling the differentials in $C$, one
%obtains a one-parameter family of categories $C_{\lambda}$ with
%$\lambda \in \mathbb{C}^{*} = \mathbb{C}\setminus\{ 0\}$, and an
%associated family ${\sf S}_{\lambda} := {\sf Stab}_{C_{\lambda}}$,
%$\lambda \in \mathbb{C}^{*}$ of moduli of stabilities. Using
%holomorphic sections with prescribed asymptotics at zero one can
%complete the family $\{{\sf S}_{\lambda}\}_{\lambda \in
%  \mathbb{C}^{*}}$ to a family ${\sf S} \to \mathbb{C}$ which in
%a neighborhood of ${\sf Stab}^{\text{geom}}_{C}$ behaves like a
%standard deformation to the normal cone.
The space ${\sf S}_{0}$ is
the fiber at $0$ of this completed family and conjecturally ${\sf S}
\to \mathbb{C}$ is one chart of a twistor-like family $\mathcal{S} \to
\mathbb{P}^{1}$ which is by definition {\em the Stability Hodge
  Structure associated with $C$}.

Stability Hodge Structures are expected to exist for more general dg-categories, in particular for Fukaya--Seidel categories associated with
a superpotential on a Calabi--Yau space or with categories of
representations of quivers.  Moreover, for special non-compact
Calabi--Yau 3-folds, the zero fiber ${\sf S}_{0}$ of a Stability Hodge
Structure can be identified with the Dolbeault realization of a
nonabelian Hodge structure of an algebraic curve. This is an
unexpected and direct connection with Simpson's nonabelian Hodge theory (see~\cite{Si92}) which we exploit further suggesting some geometric applications.

 We briefly recall nonabelian Hodge theory settings. According to Simpson (see~\cite{Si92})
we have one-parametric twistor family such that the fiber over zero is the
moduli space of Higgs bundles and the generic fiber is the moduli space --- $M_{Betti}$ --- of
representation of the fundamental group of over what Higgs bundle is. % --- see the table below.
By analogy with the nonabelian Hodge structure we have:

\begin{conj}[\cite{KKP}]
The moduli space of stability conditions of Fukaya--Seidel category can be included in   one parametric ``twistor'' family.
\end{conj}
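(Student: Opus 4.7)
The plan is to model the construction on Simpson's nonabelian Hodge theory, where the moduli of Higgs bundles (Dolbeault) and the moduli of representations of $\pi_1$ (Betti) are united in a twistor family over $\mathbb{P}^1$. For a Fukaya--Seidel category $C$ associated with a Landau--Ginzburg model, the generic fiber of the sought family should be the moduli space $\mathsf{Stab}_C$ of Bridgeland stability conditions, while the central fiber $\mathsf{S}_0$ should be the moduli space of Landau--Ginzburg models described in Section~\ref{subsubsection:MSVHS}. So the first task is to pin down both endpoints as honest complex spaces and identify a natural one-parameter specialization linking them.

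First I would restrict to the geometric component $\mathsf{Stab}^{\text{geom}}_C \subset \mathsf{Stab}_C$, which under HMS is the image of the complex moduli of the mirror. Using the central charge map $Z\colon K_0(C) \otimes \mathbb{C} \to \mathbb{C}$ one has a natural $\mathbb{C}^*$-action by rescaling $Z \mapsto tZ$; I would extend this to a family over $\mathbb{C}$ by completing in the direction $t \to 0$. Following Gaiotto--Moore--Neitzke and Kontsevich--Soibelman, the limiting stability data should collapse onto the purely combinatorial data carried by the moving scheme of the fiber at infinity of the LG model, matching the dictionary of Table~\ref{tab:CPINTRO}. This produces one affine chart $\mathsf{S} \to \mathbb{C}$ of the conjectured family.

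To complete this to a $\mathbb{P}^1$-family I would look for the analogue of Simpson's antipodal real structure. In the Fukaya--Seidel setting this should arise from Serre duality together with complex conjugation on central charges, producing a second chart near $\infty$ that glues to the first via wall-crossing automorphisms of the Kontsevich--Soibelman type. Concretely, one can attempt to patch the two charts using the spectral network/BPS structure machinery, whose jumps across walls provide exactly the transition data that a twistor family requires.

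The hard part will be showing properness of the resulting family over $\mathbb{P}^1$ and controlling the notoriously pathological (often fractal) boundary of $\mathsf{Stab}_C$ in the limit $t \to 0$. A second genuine obstacle is the absence of a general existence theorem for Bridgeland stability on Fukaya--Seidel categories; one would need to establish this at least for the LG mirrors of Fano threefolds produced in Section~\ref{subsubsection:MSVHS}, exploiting the explicit Laurent polynomial presentations of Table~\ref{table} to verify the construction in examples before attempting a uniform statement. I expect the conjecture can first be proved in the toric Fano threefold case, where the moving-scheme description and the combinatorics of Figure~\ref{figure:Fano snake} should make both charts and their gluing tractable.
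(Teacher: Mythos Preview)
The statement you are addressing is presented in the paper as a \emph{conjecture}, not a theorem: the paper offers no proof. Immediately after stating it, the authors write only that ``parts of this conjecture are checked in \cite{KKP} and \cite{HKK}'' and then give Example~\ref{example:An} (the $A_n$ case) as supporting evidence, where stability conditions are identified with differentials $e^{p(z)}dz$ and the one-parameter family is produced by the explicit rescaling $e^{p(z)/u}dz$, the limit $u\to 0$ recovering polynomials, i.e.\ Landau--Ginzburg models. The subsequent theorems describe the intended fiber over zero as a formal scheme governed by Maurer--Cartan solutions of a specific dg-complex, but this is a description of one endpoint, not a construction of the full twistor family.

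Your outline is therefore not comparable to a paper proof, because none exists. That said, your sketch is well aligned with the paper's heuristics: your $\mathbb{C}^*$-rescaling of the central charge is exactly the mechanism behind the $u$-parameter in the $A_n$ example, and your identification of the central fiber with the moduli of LG models matches what the paper asserts. Where you go beyond the paper --- invoking GMN/KS wall-crossing to glue a second chart and produce an antipodal real structure --- is plausible but speculative, and you yourself flag the two genuine obstructions (properness/boundary control of $\mathsf{Stab}_C$, and existence of stability conditions on general Fukaya--Seidel categories). These are precisely why the statement remains a conjecture. What you have written is a reasonable research program, consistent with the paper's own indications, but it is not a proof and should not be labeled as one.
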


In other words SHS exists for  Fukaya--Seidel categories. Parts of this conjecture are checked in \cite{KKP} and \cite{HKK}.

%We summarize the analogy in the table below.
%
%
%
%
%
%
%
%
%
%\begin{description}
%  \item[Higgs]
%  CVHS
%  $$\includegraphics[width=\nanowidth]{HIGSTAB1.eps}$$
%Higgs bundles and spectral  covers.
%  \item[Stab]  $$\includegraphics[width=\nanowidth]{HIGSTAB2.eps}$$
%Nonabelian Hodge structures --- Stability Hodge structures analogy.
%\end{description}
%
%
%
%
%
%

We give a brief example of SHS.

\begin{ex}
\label{example:An}
We will give a brief explanation  the calculation of the
``twistor'' family for the SHS for the category  $A_n$ recorded in the picture above.
We start with the moduli space of stability conditions for
the category  $A_n$, which can be identified with
differentials

$$ e^{p(z)}dz, $$
where $p(z)$ is a polynomial of degree $n+1$.

Classical work of Nevanlinna identifies these integrals with  graphs (see Figure~\ref{figure:Fig5}) --- graphs connecting the singularities of the function given by an integral against the exponential differentials.

 Now we consider the limit
$$ e^{p(z)/u}dz. $$
Geometrically limit differential can be identified with polynomials e.g. with Landau--Ginzburg models (see Figure~\ref{figure:Fig5}) --- for more see \cite{HKK}.

\begin{figure}[h]
\centering
\includegraphics[width=4cm]{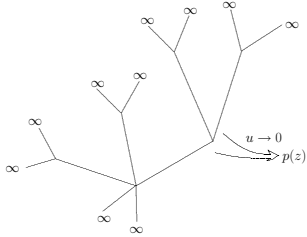}
\caption{Taking limit.}\label{figure:Fig5}
\end{figure}

\end{ex}

\subsection{The fiber over zero}

The fiber over zero (described in what follows)   plays an analogous role to the moduli space of Higgs Bundles in Simpson's  twistor family in the theory of nonabelian Hodge structures.
As it was alluded  earlier, an important class of examples of
categories and their stability conditions arises from Homological
Mirror Symmetry --- Fukaya--Seidel categories.  Indeed, such categories are the origin of the
modern definition of such stability conditions. The prescription given by Batyrev--Borisov and Hori--Vafa in \cite{BB}, \cite{hori-vafa}
to obtain homological mirrors for toric Fano varieties is
perfectly explicit and provides a reasonably large set of examples
to examine. We recall that if $\Sigma$ is a fan in $\RR^n$ for a
toric Fano variety $X_\Sigma$, then the homological mirror to the
B model of $X_\Sigma$ is a Landau--Ginzburg model $w\colon (\CC^*)^n \to \RR$ where
the Newton polytope $Q$ of $w$ is the fan polytope %convex hull of the one cones
of $\Sigma$. In fact, we may consider the domain $(\CC^*)^n$ to
occur as the dense orbit of a toric variety $X_A$ where $A$ is $Q
\cap \ZZ^n$ and $X_A$ indicates the polytope toric construction. In
this setting, the function $w$ occurs as a pencil $V_w \subset H^0
(X_A, L_A)$ with fiber at infinity equal to the toric boundary of
$X_A$. Similar construction works for generic nontoric Fano's.
In this paper we work with  directed Fukaya category associated to the
superpotential $w$ --- Fukaya--Seidel categories.   To build on the
discussion above, we discuss here these two categories in the
context of stability conditions. The fiber over zero corresponds to the moduli of complex
structures. If $X_A$ is toric, the space of complex structures on
it is trivial, so the complex moduli appearing here are a result
of the choice of fiber $H \subset X_A$ and the choice of pencil
$w$ respectively. The appropriate stack parameterizing the choice
of fiber contains the quotient $[U/(\CC^*)^n]$ as an open dense
subset where $U$ is the open subset of $H^0 (X_A, L_A)$ consisting
of those sections whose hypersurfaces are nondegenerate (i.e.
smooth and transversely intersecting the toric boundary) and
$(\CC^*)^n$ acts by its action on $X_A$. To produce a reasonably
well behaved compactification of this stack, we borrow from the
work of Alexeev (see~\cite{Al02}), Gelfand--Kapranov--Zelevinsky (see~\cite{GKZ}), and Lafforgue (see~\cite{Lafforgue}) to
construct the stack $\mathcal{X}_{Sec (A)}$ with universal
hypersurface stack $\mathcal{X}_{Laf (A)}$. We quote the following
theorem which describes much of the qualitative behavior of these
stacks:

\begin{teo}[\cite{DKK}] i) The stack $\mathcal{X}_{Sec (A)}$ is a toric stack with moment polytope equal to the secondary polytope $Sec (A)$ of $A$. \\
ii) The stack $\mathcal{X}_{Laf (A)}$ is a toric stack with moment polytope equal to the Minkowski sum $Sec (A) + \Delta_A$ where $\Delta_A$ is the standard simplex in $\RR^A$. \\
ii) Given any toric degeneration $F: Y \to \CC$ of the pair $(X_A,
H)$, there exists a unique map $f : \CC \to \mathcal{X}_{Sec (A)}$
such that $F$ is the pullback of $\mathcal{X}_{Laf (A)}$.
\end{teo}

We note that in the theorem above, the stacks $\mathcal{X}_{Laf
(A)}$ and $\mathcal{X}_{Sec (A)}$ carry additional equivariant
line bundles that have not been examined extensively in existing
literature, but are of great geometric significance. The stack
$\mathcal{X}_{Sec (A)}$ is a moduli stack for toric degenerations
of toric hypersurfaces $H \subset X_A$. There is a hypersurface
$\mathcal{E}_A \subset \mathcal{X}_{Sec (A)}$ which parameterizes
all degenerate hypersurfaces. For the Fukaya category of
hypersurfaces in $X_A$, the compliment $\mathcal{X}_{Sec (A)}\setminus \mathcal{E}_A$ plays the role of the classical stability
conditions, while including $\mathcal{E}_A$ incorporates the
compactified version where MHS come into effect.
%We predict that
%the walls of the stability conditions occurring in this setup are
%seen as components of the tropical amoeba defined by the principal
%$A$-determinant $E_A$.

To find the stability conditions associated to the directed Fukaya
category of $(X_A, w)$, one needs to identify the complex
structures associated to this model. In fact, these are precisely
described as the coefficients of the superpotential, or in our
setup, the pencil $V_w \subset H^0 (X_A , w)$. Noticing that the
toric boundary is also a toric degeneration of the hypersurface,
we have that the pencil $V_w$ is nothing other than a map from
$\mathbb{P}^1$ to $\mathcal{X}_{Sec (A)}$ with prescribed point at
infinity. If we decorate $\mathbb{P}^1$ with markings at the
critical values of $w$ and $\infty$, then we can observe such a
map as an element of $\mathcal{M}_{0, Vol (Q) + 1}
(\mathcal{X}_{Sec (A)} , [w])$ which evaluates to $\mathcal{E}_A$
at all points except one and $\partial X_A$ at the remaining
point. We define the cycle of all stable maps with such an
evaluation to be $\mathcal{W}_A$ and regard it as the appropriate
compactification of complex structures on Landau--Ginzburg A models. Applying
techniques from fiber polytopes we obtain the following
description of $\mathcal{W}_A$:

\begin{teo}[\cite{DKK}] The stack $\mathcal{W}_A$ is a toric stack with moment polytope equal to the monotone path polytope of $Sec(A)$. \end{teo}

The polytope occurring here is not as widely known as the
secondary polytope, but occurs in a broad framework of so called
iterated fiber polytopes introduced by Billera and Sturmfels.

In addition to the applications of these moduli spaces to
stability conditions, we also obtain important information on the
directed Fukaya categories and their mirrors from this approach.
In particular, the above theorem may be applied to computationally
find a finite set of special Landau--Ginzburg models $\{w_1, \ldots, w_s\}$
corresponding to the fixed points of $\mathcal{W}_A$ (or the
vertices of the monotone path polytope of $Sec (A)$). Each such
point is a stable map to $\mathcal{X}_{Sec (A)}$ whose image in
moment space lies on the $1$-skeleton of the secondary polytope.
This gives a natural semiorthogonal decomposition of the directed
Fukaya category into pieces corresponding to the components in the
stable curve which is the domain of $w_i$. After ordering these
components, we see that the image of any one of them is a
multi-cover of the equivariant cycle corresponding to an edge of
$Sec (A)$. These edges are known as circuits in combinatorics and
we study the categories defined by each such component in
\cite{DKK}.

Now we put this moduli space as a ``zero fiber'' of the ``twistor'' family of
moduli family of stability conditions.

\begin{teo}[see \cite{KKP}] The fiber over zero
is a formal scheme $F$ over   $\mathcal{W}_A$ determined by the solutions of
the Mauer--Cartan equations for a dg-complex

$$
\begin{CD}
0 @<<< \Lambda^3 T_{\overline{Y}} @<<< \Lambda^2 T_{\overline{Y}} @<<< T_{\overline{Y}} @<<< \cO_{\overline{Y}} @<<< 0. \\[-3mm]
@. -3     @.             -2      @. -1       @. 0
\end{CD}
$$

\end{teo}

{\bf A sketch of the proof.} The above complex describes deformations with fixed fiber  at infinity.  We can associate with this complex a Batalin--Vilkovisky algebra. Following  \cite{KKP} we associate with it a smooth stack.
In the case of Fukaya--Seidel category of a    Landau--Ginzburg mirror of a Fano manifold $X$ the argument above implies that the dimension of the smooth stack of Landau--Ginzburg models is equal to $h^{1,1}(X) +1$.

We also have a $\CC^*$ action on $F$ with fixed points corresponding to limiting stability conditions.

\begin{conj}[see \cite{KKPS}] The local completion of fixed points over $X$ has a mixed Hodge structure.

\end{conj}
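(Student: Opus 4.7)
The plan is to construct the mixed Hodge structure by a direct categorical analogue of Schmid's limit construction, combining a Hodge filtration coming from the non-commutative Hodge structure on the Fukaya--Seidel category $\text{FS}(X^\vee, w)$ with a weight filtration coming from monodromy at the fiber at infinity of the Landau--Ginzburg potential. This mirrors, on the LG side, the way in which the formal neighborhood of a $\mathbb{C}^*$-fixed point (a $\mathbb{C}$-VHS) in Simpson's Dolbeault moduli acquires a natural MHS from its Hodge decomposition and the nilpotent Higgs endomorphism.

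First I would produce the Hodge filtration $F^\bullet$. Since $\mathbb{C}^*$-fixed points correspond to limiting stability conditions, the $\mathbb{C}^*$-weight decomposition gives a grading on the tangent complex of $F$ at the fixed locus; this is the associated graded $\mathrm{Gr}_F^\bullet$. To extend to a filtration on the formal neighborhood, I would use the nc-Hodge structure of Katzarkov--Kontsevich--Pantev on the periodic cyclic homology $HP_*(\text{FS}(X^\vee, w))$ together with the Hodge-to-de-Rham degeneration for LG models relative to the fiber at infinity. The compatibility with the BV/Maurer--Cartan description from the complex
\[
0 \leftarrow \Lambda^3 T_{\overline{Y}} \leftarrow \Lambda^2 T_{\overline{Y}} \leftarrow T_{\overline{Y}} \leftarrow \mathcal{O}_{\overline{Y}} \leftarrow 0
\]
is crucial: I would check that the odd Maurer--Cartan element whose solutions cut out $F$ has pure Hodge weight, so that $F^\bullet$ descends to the formal scheme.

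Second, I would construct the weight filtration $W_\bullet$ from the nilpotent monodromy $N$ acting on the vanishing cohomology of $w$ near its fiber at infinity. The ``moving scheme'' data introduced in Section~\ref{section:discussion} encodes precisely this nilpotent operator, and applying Jacobson--Morozov to $N$ produces the monodromy weight filtration $W(N)[-k]$ (with the usual shift by the Hodge weight $k$). Because the $\mathbb{C}^*$-action contracts the formal neighborhood toward the fixed locus, $N$ extends canonically to the whole completion, and the filtration it induces is preserved by the BV differential.

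Finally I would verify the MHS axioms: opposedness of $(F^\bullet, W_\bullet)$ on each $\mathrm{Gr}^W_k$, and the existence of a $\mathbb{Q}$-structure. For opposedness the key is a non-commutative analogue of Schmid's nilpotent orbit theorem: the asymptotic behavior of the nc-Hodge filtration as one approaches the stratum $\mathcal{E}_A \subset \mathcal{X}_{\mathrm{Sec}(A)}$ should be controlled by $N$ in exactly the manner predicted by the $SL_2$-orbit theorem. The rational structure should come from the integral lattice in $HP_*$ furnished by topological $K$-theory of $\text{FS}(X^\vee, w)$ (the $\Gamma$-class/Betti structure of KKP), pulled back through the period map $F \to \mathcal{W}_A$. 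The main obstacle is this last point: proving the nc-nilpotent orbit theorem and exhibiting the $\mathbb{Q}$-structure in families over the formal scheme $F$ --- in particular showing that the Betti lattice of the generic LG fiber specializes compatibly along the $\mathbb{C}^*$-degeneration to the limiting stability condition --- is where all the genuinely new analysis lies; once this is in place, opposedness and the construction of the MHS follow from the categorical analogue of Cattani--Kaplan--Schmid.
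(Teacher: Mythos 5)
The statement you are trying to prove is not proved in the paper at all: it is stated as a conjecture, attributed to the forthcoming work \cite{KKPS}, and the authors offer no argument for it. So there is no ``paper's proof'' to compare against, and the relevant question is whether your sketch actually closes the gap. It does not. What you have written is a plausible research program, not a proof: every genuinely hard step is deferred to an unproven analogue of a deep classical theorem. Concretely, (i) the non-commutative nilpotent orbit / $SL_2$-orbit theorem that you invoke to verify opposedness of $(F^\bullet, W_\bullet)$ does not exist in the literature and is at least as hard as the conjecture itself; (ii) the rational (Betti) structure on $HP_*$ of a Fukaya--Seidel category, and its compatible specialization along the $\CC^*$-degeneration, is itself one of the open conjectures of the KKP framework, so you are grounding the $\QQ$-structure of the would-be MHS on another conjecture; (iii) the assertion that the moving scheme ``encodes precisely'' a nilpotent operator $N$ on vanishing cohomology, and that $N$ extends canonically over the formal completion and commutes with the BV differential of the deformation complex
$0 \leftarrow \Lambda^3 T_{\overline{Y}} \leftarrow \Lambda^2 T_{\overline{Y}} \leftarrow T_{\overline{Y}} \leftarrow \cO_{\overline{Y}} \leftarrow 0$,
is stated without any justification, and it is exactly the point where the linear data (monodromy of the LG model) has to be promoted to data on a non-linear formal moduli space.

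Your overall architecture --- Hodge filtration from the $\CC^*$-weight grading at a limiting stability condition plus nc-Hodge theory, weight filtration from monodromy at infinity via Jacobson--Morozov, by analogy with the MHS on formal neighborhoods of $\CC$-VHS fixed points in Simpson's Dolbeault moduli space --- is consistent with the analogy the paper itself draws between Stability Hodge Structures and nonabelian Hodge theory, and is probably the intended line of attack. But you should present it as a strategy with explicitly identified open inputs rather than as a proof; as written, the argument is circular in the sense that its two load-bearing lemmas (the nc-orbit theorem and the Betti lattice) are each no easier than the conjecture being proved.
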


In the same way as the fixed point set under the  $\CC^*$ action play an important role in describing the rational homotopy types of smooth projective varieties we study the fixed points of the  $\CC^*$ action on $F$ and derive information about the homotopy type of a category. %In the rest of the paper we will denote
%$\mathcal{W}_A$ by $M(\PP^1, CY)$ ($M(\PP^k, CY)$) in order to stress the connection with Landau--Ginzburg models.

Similarly we can modify the above complex by fixing only a  part of the fiber
at infinity and deforming the rest. Similar  Batalin--Vilkovisky algebra technique allows us to prove

\begin{teo}(\cite{KKP})  We obtain a smooth moduli stack of Landau--Ginzburg models if we fix only a part of the fiber at infinity.
\end{teo}

This means that we can  allow different parts of the fiber at infinity to move
--- we call this part a \emph{moving scheme}. The geometrical properties  of the moving scheme contain  a deep birational, categorical, and  algebraic cycles information. We record this information in new invariants,
%in the last section.
mainly emphasizing the birational content.

\subsection{Birational Applications}

In this subsection we look at the data collected from Sections~\ref{section:classical} and~\ref{subsubsection:MSVHS} from a new categorical prospective.

We apply the theorems above to the case of Landau--Ginzburg models for Del Pezzo surfaces --- this gives a new read of the Subsection~\ref{subsection:2dim basic links}. The basic links among Del Pezzo surfaces can be interpreted as follows.

\begin{teo}[\cite{DKK}]
\label{theorem:del Pezzo}
There exists an 11-dimensional moduli stack of all Landau--Ginzburg models of all del Pezzo surfaces. This moduli stack has a cell structure with the biggest cells corresponding to the   del Pezzo surfaces of big Picard rank. The basic links correspond to moving to the boundary of this stack.
\end{teo}

{\bf Proof.} The proof of this statement amounts to allowing all points at the fiber at infinity to move (the case of rational elliptic fibration)   and then fixing them one by one (for the first step see Table~\ref{tab:PICTURE}).

\begin{table}[h]
  \begin{center}
    \begin{tabular}[t]{|c|c|}
\hline
\begin{minipage}[c]{0.4\nanowidth}
\centering
\medskip

Del Pezzo.

\medskip

\end{minipage}
&
\begin{minipage}[c]{1.6\nanowidth}
\medskip

\begin{center}

Moving scheme.

\end{center}

\medskip

\end{minipage}
\\
\hline \hline
\begin{minipage}[c]{0.4\nanowidth}
\medskip

\begin{center}

$\PP^2$

\end{center}

\medskip

\end{minipage}
&
\begin{minipage}[c]{1.6\nanowidth}
\medskip

\begin{center}

%$\includegraphics[width=7cm]{SSARK1.eps}$
$\includegraphics[width=5cm]{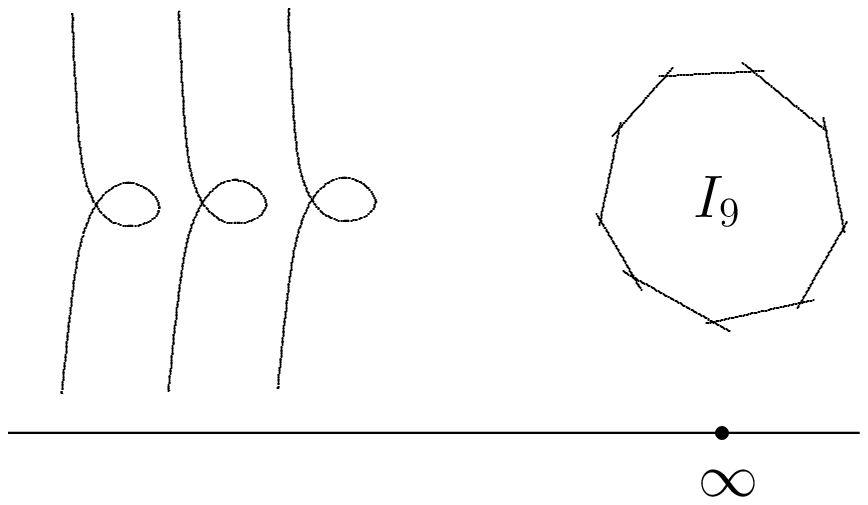}$

\end{center}

\medskip

\end{minipage}
\\ \hline
\begin{minipage}[c]{0.4\nanowidth}
\medskip

\begin{center}

$\widehat{P}^2_{pt}$

\end{center}

\medskip

\end{minipage}
&
\begin{minipage}[c]{1.6\nanowidth}
\medskip

\begin{center}

$\includegraphics[width=5cm]{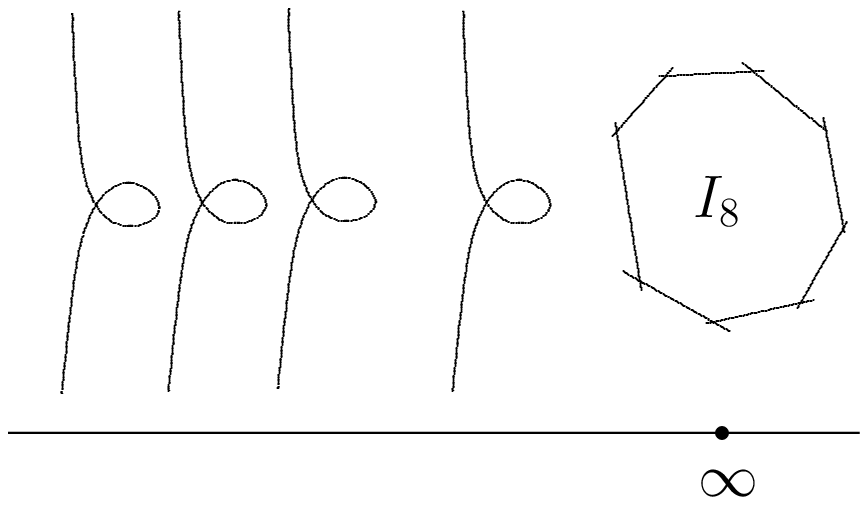}$

\end{center}

\medskip

\end{minipage}
\\ \hline
    \end{tabular}
    \caption{Moving points.}
    \label{tab:PICTURE}
  \end{center}
\end{table}

Theorem~\ref{theorem:del Pezzo} suggests that we can extend the  construction to rational blow-downs. We associate a moduli space of Landau--Ginzburg models to a rational blow-down of a rational surface by fixing corresponding  subschemes of the fiber at infinity.
This is a new construction in category theory, where the compactified moduli spaces of Landau--Ginzburg models plays the role of the moduli space of vector bundles in the Donaldson theory of polynomial invariants. As a result we get a tool for studying the semiorthogonal decompositions by putting a topological structure on them based on the compactification of moduli spaces of Landau--Ginzburg models.
We conjecture the following (see also \cite{DKK2}).

\begin{conj}[see \cite{KKPS}] The derived categories of the Barlow surface  and of the rational blow-down described above contain as a semiorthogonal piece a  phantom category
i.e. a nontrivial category with trivial $K^0$ group.

\end{conj}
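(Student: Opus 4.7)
The plan is to establish both statements by exhibiting an exceptional collection of maximal length and then detecting non-triviality of its semiorthogonal complement by an invariant strictly finer than $K^0$ and Hochschild homology. For the Barlow surface $S$ I would first use Barlow's presentation of $S$ as a $\ZZ/5$ quotient of a simply-connected surface with larger invariants, and pull-push line bundles twisted by characters of $\ZZ/5$ to produce a candidate collection $L_1,\ldots,L_{11}\subset D^b(S)$. Exceptionality $\Hom^\bullet(L_i,L_j)=0$ for $i>j$ and $\Hom^\bullet(L_i,L_i)=\CC$ would be verified using $p_g=q=0$, Serre duality, Kodaira--Ramanujam vanishing applied to the differences $L_j-L_i$, and the character decomposition of cohomology on the cover. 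The length $11=\mathrm{rk}\,K_0(S)_{\QQ}$ is dictated by $\chi(\cO_S)=1$ together with $\rho(S)+2=11$, so the orthogonal complement $\A:=\langle L_1,\ldots,L_{11}\rangle^\perp$ has $K_0(\A)\otimes\QQ=0$ by additivity; the integral vanishing (i.e.\ no torsion) requires an argument using $\pi_1(S)=1$ to rule out twisted K-classes coming from the Brauer group, and for the quasi-phantom version one stops at $K_0(\A)_{\mathrm{tors}}\cong H_1(S,\ZZ)$ after the quotient step.

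Next I would check non-triviality of $\A$. Since $HH_\bullet(\A)$ is expected to vanish (that is exactly the phantom property we want), I would compute $HH^\bullet(\A)$ by additivity of Hochschild cohomology along the SOD together with the known $HH^\bullet(S)$, or dually use the Hodge-theoretic pairing between $HH^\bullet$ and $HH_\bullet$ under Serre duality, which must fail if $\A=0$. A cleaner route, keeping with the philosophy of the paper, is to go through mirror symmetry: by the fiber-over-zero theorem and the moving-scheme construction of Section~\ref{section:discussion}, $S$ corresponds to a fixed subscheme of the fiber at infinity in a Landau--Ginzburg model over the del Pezzo stack of Theorem~\ref{theorem:del Pezzo}; the SOD of the Fukaya--Seidel category of this LG model produced by circuits in the monotone path polytope of $Sec(A)$ contains a distinguished piece matching $\A$, and this piece is seen to be non-zero because the corresponding stable map to $\X_{Sec(A)}$ is non-constant.

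For the rational blow-down $X$ of a configuration $C$ in a rational surface $Y$, I would run the parallel program: start from an explicit full exceptional collection on $Y$ (which exists because $Y$ is rational), identify the subcollection adapted to the neighborhood of $C$ with the sequence of line bundles determined by the Wahl singularity replacing $C$, and mutate so that this subcollection becomes left-orthogonal to the rest. The blow-down then corresponds to smoothing the singularity, i.e.\ to fixing a new moving scheme in the LG mirror; the new SOD has one piece coming from the smoothing (numerically trivial in $K_0$ because a rational homology ball replaces $C$) and the detection of its non-triviality is done again via $HH^\bullet$ or via the geometry of $\mathcal{W}_A$ using the additional equivariant line bundles mentioned after the Diemer--Katzarkov--Kerr theorem.

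The hard part, and the genuine content of the conjecture, is precisely the non-triviality step: by construction the candidate $\A$ is designed to be invisible to $K_0$ and to $HH_\bullet$, so the only handles are $HH^\bullet$, the SHS/twistor picture, or a direct production of a non-zero morphism. I expect the cleanest path to be through the Stability Hodge Structure: the $\CC^\ast$-fixed points of the moduli of LG models attached to $S$ (respectively to the rational blow-down) carry a non-trivial mixed Hodge structure by the conjecture following Theorem~\ref{theorem:del Pezzo}, and this MHS must be accounted for by some SOD piece. Since all ``visible'' pieces are exceptional objects contributing only to the pure weight-zero part, the residual weight-filtration data forces the existence of a non-zero summand $\A$ with trivial $K^0$, which is the desired phantom.
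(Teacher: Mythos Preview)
The statement you are attempting to prove is explicitly labeled a \emph{Conjecture} in the paper; the paper offers no proof whatsoever. Immediately after stating it the authors write ``This conjecture is rather bold\ldots\ Some evidence for it have already appeared in [IKP].'' There is therefore no ``paper's own proof'' to compare your proposal against.

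As for the substance of your outline: the strategy of building an exceptional collection of length $11$ on the Barlow surface and studying its orthogonal complement is indeed the right shape (and is how the conjecture was eventually established in later work). But your proposal has a genuine gap at exactly the point you yourself flag as ``the hard part''. Your non-triviality argument for $\A$ goes through the Stability Hodge Structure and the mixed Hodge structure on $\CC^*$-fixed points of the LG moduli --- but in this paper both of those are themselves \emph{conjectures} (the ``twistor family'' conjecture and the MHS-on-fixed-points conjecture of \cite{KKPS}). Invoking open conjectures to prove another open conjecture is not a proof; it is at best a heuristic or a reformulation. Likewise, the appeal to Theorem~\ref{theorem:del Pezzo} and the circuit decomposition of $\mathcal{W}_A$ tells you there \emph{is} a SOD on the Fukaya--Seidel side, but gives no mechanism for showing a specific piece is non-zero while having vanishing $K^0$ and $HH_\bullet$. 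A separate minor issue: the Barlow surface is simply connected, so describing it as ``a $\ZZ/5$ quotient of a simply-connected surface with larger invariants'' is not accurate; Barlow's construction is more delicate and your pull-push-by-characters recipe for the exceptional objects does not apply as stated.
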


This conjecture is rather bold and will make the studies of algebraic cycles and rationality questions rather difficult. Some evidence for it have already appeared in \cite{IKP}.

We summarize our findings in Table~\ref{tab:LGF1}.

\begin{table}[h]
  \begin{center}
    \begin{tabular}[t]{|c|c|c|}
\hline
\begin{minipage}[c]{0.8\nanowidth}
\centering
\medskip

Surface

\medskip

\end{minipage}
&
\begin{minipage}[c]{0.8\nanowidth}
\medskip

\begin{center}

Configuration at $\infty$ and a moving scheme

\end{center}

\medskip

\end{minipage}
&
\begin{minipage}[c]{0.8\nanowidth}
\medskip

\begin{center}

$M_{LG}$ as an invariant

\end{center}

\medskip

\end{minipage}
\\
\hline \hline
\begin{minipage}[c]{0.8\nanowidth}
\medskip

\begin{center}

$\PP^2$.

\end{center}

\medskip

\end{minipage}
&
\begin{minipage}[c]{0.8\nanowidth}
\medskip

\begin{center}

Wheel of 9 lines, all points are fixed --- no moving scheme.

\end{center}

\medskip

\end{minipage}
&
\begin{minipage}[c]{0.8\nanowidth}
\medskip

\begin{center}

Two-dimensional moduli space, $FS(LG(\PP^2))$ has a semiorthogonal decomposition.

\end{center}

\medskip

\end{minipage}
\\ \hline
\begin{minipage}[c]{0.8\nanowidth}
\medskip

\begin{center}

$E(1)$ --- rational elliptic surface with 12 singular fibers.

\end{center}

\medskip

\end{minipage}
&
\begin{minipage}[c]{0.8\nanowidth}
\medskip

\begin{center}

Wheel of 9 lines, all points can move --- moving scheme is all intersection points.

\end{center}

\medskip

\end{minipage}
&
\begin{minipage}[c]{0.8\nanowidth}
\medskip

\begin{center}

Ten dimensional moduli space.

\end{center}

\medskip

\end{minipage}
\\ \hline
\begin{minipage}[c]{0.8\nanowidth}
\medskip

\begin{center}

Rational blow-down of $E(1)\#7\overline{\CC\PP}^2$.

\end{center}

\medskip

\end{minipage}
&
\begin{minipage}[c]{0.8\nanowidth}
\medskip

\begin{center}

Wheel of 9 lines s.t. six points with multiplicities configure a moving scheme. % is an intersection points with some multiplicities. %of the fiber has prescribed multiplicities of singular points.

\end{center}

\medskip

\end{minipage}
&
\begin{minipage}[c]{0.8\nanowidth}
\medskip

\begin{center}

The moduli space is parameterized by Exts't from $\Phi$ to $E_i$ in the category
$\langle E_1, \ldots, E_9, \Phi\rangle$,

$K^0(\Phi)=0$,

$\pi_1 (M_{LG(E(1)\#7\overline{\CC\PP}^2)}\setminus D)$ as an invariant.

\end{center}

\medskip

\end{minipage}
\\ \hline
%\begin{minipage}[c]{0.8\nanowidth}
%\medskip
%
%\begin{center}
%
%Three blow-downs of $E(1)$
%
%\end{center}
%
%\medskip
%
%\end{minipage}
%&
%\begin{minipage}[c]{0.8\nanowidth}
%\medskip
%
%\begin{center}
%
%Wheel of 3 lines
%
%\end{center}
%
%\medskip
%
%\end{minipage}
%&
%\begin{minipage}[c]{0.8\nanowidth}
%\medskip
%
%\begin{center}
%
%$\pi_1 (M_{LG}\setminus D)$
%
%\end{center}
%
%\medskip
%
%\end{minipage}
%\\ \hline
%\begin{minipage}[c]{0.8\nanowidth}
%\medskip
%
%\begin{center}
%
%Barlow surface $B$.
%
%\end{center}
%
%\medskip
%
%\end{minipage}
%&
%\begin{minipage}[c]{0.8\nanowidth}
%\medskip
%
%\begin{center}
%
%Singularities of a fiber over infinity are quotient of the mirror to Godeaux surface.
%
%\end{center}
%
%\medskip
%
%\end{minipage}
%&
%\begin{minipage}[c]{0.8\nanowidth}
%\medskip
%
%\begin{center}
%
%$\pi_1 (M_{LG(B)}\setminus D)$.
%
%\end{center}
%
%\medskip
%
%\end{minipage}
%\\\hline
\begin{minipage}[c]{0.8\nanowidth}
\medskip

\begin{center}

$\CC\PP^2 \# 8 \overline{\CC\PP}^2$.

\end{center}

\medskip

\end{minipage}
&
\begin{minipage}[c]{0.8\nanowidth}
\medskip

\begin{center}

Wheel of 9 curves, the moving scheme consist of 8 points.

\end{center}

\medskip

\end{minipage}
&
\begin{minipage}[c]{0.8\nanowidth}
\medskip

\begin{center}

The moduli space is parameterized by Exts't among $E_i$'s in the category
$\langle E_1, \ldots, E_{11}\rangle$,
$\pi_1 (M_{LG(\CC\PP^2 \# 8 \overline{\CC\PP}^2)}\setminus D)$ as an invariant.

\end{center}

\medskip

\end{minipage}
\\ \hline
    \end{tabular}
    \caption{Moduli of Landau--Ginzburg models for surfaces.}
    \label{tab:LGF1}
  \end{center}
\end{table}

In the leftmost part of this table we consider different surfaces. In second part we describe the fiber at infinity with the corresponding moving scheme.
In the last part we comment what is  the moduli of Landau--Ginzburg models
and what are some of its invariants. In most cases this is the fundamental group of the non-compactified moduli space. In case of rational blow-down this fundamental group suggests the appearance of new phenomenon a nontrivial category with trivial $K^0$ group --- a phantom category, which we will discuss later. This also appear in the Barlow surface. The connection with Godeaux surface (see \cite{BBS}) suggests that   the fundamental group of the non-compactified moduli space differs form the  the fundamental group of the non-compactified moduli space
of LG models for Del Pezzo surface  of degree 1.

\begin{oss} Figure~\ref{figure:Fano snake} suggests that different Fanos are connected in the big  moduli of Landau--Ginzburg models either by wall-crossings or by going to the boundary of such a moduli space.
\end{oss}

\subsection{High  dimensional Fano's}

We concentrate on the case of high dimensional Fano manifolds. We give the findings in Subsection~\ref{subsection:3dim basic links} in the following categorical read.

\begin{conj} There exists a moduli stack of  Landau--Ginzburg models of all 3-dimensional Fano's. It has  a cell structure parallel to the basic links from Subsection~\ref{subsection:3dim basic links}.
\end{conj}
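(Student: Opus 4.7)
The plan is to extend the construction underlying Theorem~\ref{theorem:del Pezzo} from del Pezzo surfaces to three-dimensional Fanos, using as input the explicit toric Landau--Ginzburg models in Table~\ref{table} together with the Batalin--Vilkovisky deformation-theoretic framework of the previous subsection. First I would fix a smooth Fano threefold $X$ and, by Theorem~\ref{theorem:toric table} (or, for those $X$ not in the table, by Conjecture~\ref{conjecture:MS}), start from a toric Landau--Ginzburg model $f$ whose total space admits a Calabi--Yau compactification $\overline{Y}\to\CC$ as in Proposition~\ref{proposition:CY}. The local structure of the moduli is then governed by the Maurer--Cartan solutions of
\begin{equation*}
0\leftarrow \Lambda^{3}T_{\overline{Y}}\leftarrow \Lambda^{2}T_{\overline{Y}}\leftarrow T_{\overline{Y}}\leftarrow \cO_{\overline{Y}}\leftarrow 0,
\end{equation*}
with a prescribed moving scheme $Z_{\infty}$ in the fibre at infinity. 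By the smoothness theorem cited above, the resulting stack $\mathcal{M}(X,Z_{\infty})$ is smooth of dimension $h^{1,1}(X)+1$; this will serve as the local chart for the cell attached to $X$.

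Next I would glue these local charts into a single stack $\mathcal{M}^{3}$. The gluing is dictated by the toric basic links of Subsection~\ref{subsection:3dim basic links} together with the Ilten-type Minkowski mutations of Proposition~\ref{proposition:Ilten}: each of the five link types \SarSP, \SarDP, \SarL, \SarC, \SarCDV\ translates, via the Newton-polytope recipe illustrated in the example following Remark~\ref{remark:2 smoothings}, into a specific enlargement of $Z_{\infty}$, namely adjoining the torus-invariant subscheme that records the projection centre on the mirror side. The claim is that these enlargements realise the cells of $\mathcal{M}^{3}$ as boundary strata, exactly mirroring the secondary-polytope combinatorics of $\mathcal{X}_{\mathrm{Sec}(A)}$ and $\mathcal{W}_{A}$ of the two theorems quoted from \cite{DKK}; the incidence diagram of these cells then reproduces Figure~\ref{figure:Fano snake}.

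The third step is to show that the skeleton so constructed exhausts the conjectured stack. Here I would argue by descending induction along Table~\ref{table}: the identity $\dim\mathcal{M}(X,Z_{\infty})=h^{1,1}(X)+1$ together with the dimension count for the boundary strata forces the cells of highest Picard rank to be open dense, while collapsing subschemes of $Z_{\infty}$ reproduces cells of smaller-rank Fanos, producing a cell decomposition indexed by the basic-link graph.

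The main obstacle, I expect, is twofold. On the geometric side, threefold LG models live on genuine open Calabi--Yau threefolds whose fibrewise compactifications only exist after crepant resolutions of du~Val curves and small resolutions of nodes (see the proof of Proposition~\ref{proposition:CY}); controlling the BV-algebra of $\overline{Y}$ through these resolutions, and checking that $h^{1,1}(X)+1$ is attained globally and not merely formally, is delicate. On the combinatorial side, basic links of types \SarSP\ and \SarC\ (cf. Remark~\ref{remark:index-2-case}) require non-trivial genericity hypotheses for $\beta$ to be birational; on the mirror side these become open transversality conditions between $Z_{\infty}$ and the fibre at infinity, and verifying that they patch consistently across all cells---so that $\mathcal{M}^{3}$ is a genuine stack rather than a disjoint union---is where I expect the bulk of the technical work to lie.
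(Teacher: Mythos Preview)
The statement you are trying to prove is explicitly labeled a \emph{Conjecture} in the paper, and the paper offers no proof of it. What follows the conjecture in the text is motivation and heuristic evidence: the analogy with Theorem~\ref{theorem:del Pezzo} for del Pezzo surfaces, the idea of allowing different subschemes of the fibre at infinity to move, and the remark that in dimension three one needs ``modifications, gluing, and conifold transitions'' leading to ``Landau--Ginzburg moduli spaces with many components.'' There is therefore no proof in the paper to compare your attempt against.

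Your outline follows the paper's heuristics rather closely, but it does not constitute a proof, and the paper does not claim that this heuristic can presently be made rigorous. Several of your inputs are themselves open: you invoke Conjecture~\ref{conjecture:MS} for Fano threefolds not covered by Table~\ref{table}, so the very first step fails for most threefolds; the smoothness theorem from \cite{KKP} that you cite for the local charts is stated in the paper at the level of a formal scheme over $\mathcal{W}_A$ (and even then only a sketch is given), so the passage to a global algebraic or analytic stack with the asserted dimension is not established; and the gluing step, which you describe as ``dictated by'' basic links and Minkowski mutations, is exactly the part the paper flags as requiring conifold transitions and multi-component moduli, with no mechanism provided. Your own final paragraph correctly identifies these as obstacles; the honest summary is that what you have written is a programme, essentially the same programme the paper sketches as motivation, not a proof, and the paper itself does not claim otherwise.
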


This conjecture is based on the following implementation of the theory of Landau--Ginzburg models. In the same way as in case of del Pezzo surfaces we can allow moving different subschemes at the fiber at infinity. The first 3-dimensional examples was worked out at \cite{KKOY} and \cite{AAK}. In these cases the moving scheme at infinity   corresponds to a Riemann surface  so modified Landau--Ginzburg models correspond to Landau--Ginzburg mirrors of blown up toric varieties.
In higher dimensions of course the cell structure is more elaborated.
By fixing different  parts at the divisor at infinity we can change the Picard rank of the generic fiber. Modifications, gluing, and conifold transitions are needed in dimension three and four. These leads to  the need of Landau--Ginzburg moduli spaces with many components.

The next case to consider is the case of three-dimensional cubic.
In this case the moving scheme is described at \cite{IKP}.
Similar moving scheme is associated with the threefold  $X_{14}$.

The next theorem follows from~\cite{DKK}.

\begin{teo} The moduli space of the Landau--Ginzburg mirrors for the smooth
three dimensional cubic and  $X_{14}$ are deformations of one another.
\end{teo}

As it follows from \cite{DKK} this would imply their birationality since it means that some  Mori fibrations associated with three-dimensional cubic and  $X_{14}$ are connected via a Sarkisov links.
The A side interpretation of this result is given in \cite{BFK2}. It implies that the semiorthogonal decompositions of the derived categories of three-dimensional cubic and  $X_{14}$ have  a common semiorthogonal piece and differ only by several exceptional objects --- a result obtained by  Kuznetsov in \cite{KUZ}.
Similar observations can be made for other three-dimensional Fano manifolds, whose Landau--Ginzburg models can be included in one big moduli space. So studying and comparing  these Landau--Ginzburg models at the same time brings a new approach to birational geometry.  The material described in Subsection~\ref{subsection:3dim basic links} suggests that there are many other 3-dimensional Fano manifolds related as  3-dimensional cubic and $X_{14}$, that is related
by one only non-commutative cobordism.
Moving from one Landau--Ginzburg model associated to one Fano threefold to another can be
considered as a  certain ``wall-crossing''. As the material of Subsection~\ref{subsection:3dim basic links} suggests we can include singular Fano threefolds as boundary of the moduli space of
Landau--Ginzburg models --- i.e. ``limiting stability conditions'' on which even more dramatic ``wall-crossing'' occurs.
The experimental material from  Subsection~\ref{subsection:3dim basic links}  and Conjecture~\ref{conjecture:MS} also brings the idea that  studying birational geometry of Fano threefolds and proving Homological Mirror Symmetry for them might be closely related problems.

Similar picture exists in higher dimension. We give  examples and invariants connected with  moduli spaces of Landau--Ginzburg models associated with very special  4-dimensional Fano manifolds --- four dimensional cubic and their ``relatives''. For these 4-folds there we look at  so called Hasset--Kuznetsov--Tschinkel  program from Landau--Ginzburg prospective.

It is expected that there are many analogues in dimension four to the behavior of three-dimensional cubic and  $X_{14}$, namely they have a common semiorthogonal piece and differ only by several exceptional objects. We indicate several of them.

The study of four dimensional cubic was undertaken by many people: Voisin, Beauville, Donagi, Hasset, Tschinkel. On derived  category level  a lot of fundamental work was done by Kuznetsov and then  Addington and  Thomas. On the Landau--Ginzburg side calculations were done by \cite{KP} and \cite{IKS}.  We extend the approach we have undertaken in the case of three-dimensional Fano threefolds to the case of some fourfolds.

We recall the following theorem by Kuznetsov.

\begin{teo}[\cite{KUZ}] Let $X$ be a smooth 4-dimensional cubic.
Then

$$D^b(X)=\langle D^b(K3), E_1,  E_2,  E_3\rangle.$$
%(here and in what follows this means that the category is generated by $D^b(K3)$ and three exceptional objects $E_1,  E_2,  E_3$).

\end{teo}

Here $D^b(K3)$ is the  derived category of a noncommutative $K3$  surface.
This noncommutative $K3$ surface is very non-generic one.

Moving  subscheme at infinity  corresponding to a generic $K3$ surface we obtain the moduli spaces of Landau--Ginzburg models associated to
four-dimensional $X_{10}$.
This suggests

\begin{conj} Let $X$ be a smooth 4-dimensional variety $X_{10}$.
Then
$$D^b(X)=\langle D^b(K3), E_1,  E_2,  E_3, E_4\rangle.$$
\end{conj}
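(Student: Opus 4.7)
The plan is to produce the semiorthogonal decomposition explicitly on the $B$-side, then check that it is compatible with the moduli picture on the $A$-side as suggested by the discussion preceding the conjecture. First I would identify four candidate exceptional objects. A smooth Gushel--Mukai fourfold $X=X_{10}$ can be realized either as an ordinary Gushel--Mukai fourfold (a transverse intersection $\mathrm{Gr}(2,5)\cap Q\cap L$ for $L$ a codimension--two linear subspace and $Q$ a quadric) or as a double cover of a linear section of $\mathrm{Gr}(2,5)$. In either realization one has the restriction $\U$ of the tautological subbundle from $\mathrm{Gr}(2,5)$ together with the polarization $\cO_X(1)$ given by the Plücker embedding, and the Fano index is $2$. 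The natural candidates for $E_1,\dots,E_4$ are therefore
\[
E_1=\cO_X,\quad E_2=\U^{*},\quad E_3=\cO_X(1),\quad E_4=\U^{*}(1).
\]

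Second, I would verify that $\langle E_1,E_2,E_3,E_4\rangle$ is an exceptional collection. The key computation is the vanishing of $\Ext^{>0}$ and the one-dimensionality of $\Hom$ among these sheaves, which reduces via the Koszul/Lefschetz resolution coming from the embedding $X\hookrightarrow \mathrm{Gr}(2,5)$ to Borel--Bott--Weil on the Grassmannian. The same pair $(\cO,\U^{*})$ (a Lefschetz block in the sense of Kuznetsov) is the one that makes the cubic fourfold argument work once transported across the family of LG models, so these computations should parallel those for the cubic, only with an extra block because the codimension of $X_{10}$ inside its ambient Grassmannian fourfold--plus--quadric geometry is different.

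Third, I would define the putative K3 category
\[
\mathcal{K}(X)\;:=\;\langle E_1,E_2,E_3,E_4\rangle^{\perp}\subset D^{b}(X)
\]
and show it is a $2$-Calabi--Yau category of K3 type. Numerically this follows by computing the Hochschild homology of $\mathcal{K}(X)$ via additivity from the decomposition, using that $HH_{*}(X_{10})$ has the same total dimension and Hodge shape as $HH_{*}(\text{cubic fourfold})$ plus one extra exceptional contribution; the residual piece then has the Hochschild homology of a K3. To upgrade this numerical statement to the statement that $\mathcal{K}(X)$ is a K3 category, I would compute the Serre functor on $\mathcal{K}(X)$ by mutating $S_{X}=(-\otimes \omega_X)[4]=(-\otimes \cO_X(-2))[4]$ through the exceptional collection; the mutation identities for a rectangular Lefschetz collection of length two with respect to $\cO_X(1)$ should give $S_{\mathcal{K}(X)}=[2]$, which characterises a $2$-Calabi--Yau category.

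Finally, I would deduce the conjecture itself by combining the three ingredients above with the deformation/moduli argument spelled out earlier in Section~\ref{section:discussion}. Concretely, the theorem that the moduli of LG models for the cubic fourfold and for $X_{10}$ are deformations of one another gives, via HMS, a flat family of Fukaya--Seidel categories specialising to $\Fuk(LG(X_{10}))$ and to $\Fuk(LG(\text{cubic}))$, and the four exceptional objects on the $B$-side correspond under HMS to Lefschetz thimbles that persist through the wall--crossing, so the residual K3 factor for $X_{10}$ is a flat deformation of Kuznetsov's K3 category for the cubic. The hardest step is the last one: controlling the Fukaya--Seidel category across the wall between the two LG moduli components and showing that no phantom factor appears beyond the $\langle E_{i}\rangle$ block, equivalently that the orthogonal to $\langle E_{1},\dots,E_{4}\rangle$ is irreducible and of K3 type rather than a K3 category extended by a quasi--phantom. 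I expect this to require an explicit analysis, in the spirit of Section~\ref{subsubsection:MSVHS}, of the moving scheme on the fiber at infinity that distinguishes $LG(X_{10})$ from $LG(\text{cubic})$, together with a comparison of Hochschild homologies to rule out extra summands.
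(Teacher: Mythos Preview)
The statement you are attempting to prove is explicitly a \emph{conjecture} in the paper, and the paper offers no proof. Its entire commentary on the matter is a single sentence: ``We expect this conjecture will follow from some version of homological projective duality.'' So there is no proof in the paper to compare your proposal against.

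That said, a brief comparison of approaches is still worth making. The paper's expectation is that the decomposition should come from homological projective duality applied to the Gushel--Mukai fourfold, i.e.\ a purely $B$-side categorical argument. Your first three steps sketch exactly such an argument: restrict a Lefschetz block $(\cO,\U^{*})$ from $\mathrm{Gr}(2,5)$, twist once, and compute the Serre functor on the orthogonal by mutation. This is the right shape of argument and is in fact how the result was later established (by Kuznetsov--Perry). Your fourth step, however, is both unnecessary and unjustified: once you have shown $\langle E_1,\dots,E_4\rangle$ is exceptional and that the right orthogonal $\mathcal{K}(X)$ has Serre functor $[2]$, the semiorthogonal decomposition $D^b(X)=\langle \mathcal{K}(X),E_1,\dots,E_4\rangle$ is already complete; nothing about LG moduli or wall-crossing is needed, and appealing to an unproven HMS correspondence to ``rule out phantoms'' would not add rigor. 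The paper invokes the LG moduli picture only as motivation for \emph{formulating} the conjecture, not as a proposed method of proof.
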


Here $D^b(K3)$ is the  derived category of a generic noncommutative $K3$  surface. We expect this conjecture will follow from some version of homological projective duality.

Similarly  to the 3-dimensional case there is overlap between the Landau--Ginzburg moduli spaces of the four-dimensional cubic  and four-dimensional $X_{10}$.  We conjecture

\begin{conj} There is an infinite series of moduli of Landau--Ginzburg models associated with special (from the Noether--Lefschetz loci) four-dimensional cubics  and four-dimensional $X_{10}$'s, which can be deformed one to another and therefore they are birational (see Table~\ref{tab:SSARK}).
\end{conj}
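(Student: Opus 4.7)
The plan is to model the four-dimensional case on the three-dimensional prototype treated earlier, where the LG moduli of the cubic threefold and of $X_{14}$ were shown to be deformations of one another, yielding their birationality through a common Sarkisov link. Accordingly, I would construct $LG(X)$ for each smooth cubic fourfold $X$ and $LG(X_{10})$ for each fourfold $X_{10}$ in such a way that the fiber at infinity contains, as a distinguished component, the Landau--Ginzburg mirror of Kuznetsov's noncommutative K3 subcategory $\mathcal{A}_X \subset D^b(X)$, respectively its conjectural analogue for $X_{10}$. The aim is then to show that after restriction to matching Noether--Lefschetz loci on both sides the two LG models live in a common chart of the big LG moduli stack.

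First, I would build $LG(X)$ and $LG(X_{10})$ by extending the Hori--Vafa / Batyrev--Borisov recipe to toric degenerations of these fourfolds and compactifying so that the fiber at infinity contains the expected K3 piece together with further boundary components encoding the exceptional objects in each SOD (three for the cubic, four for $X_{10}$). Next, invoking the smoothness of the moduli stack of LG models with a partially fixed fiber at infinity (the KKP theorem quoted above), I would fix the K3 component on both sides and let the remaining boundary strata play the role of the moving scheme. For a cubic fourfold in a Hassett divisor $\mathcal{C}_d$ --- those whose $\mathcal{A}_X$ is geometric in the Kuznetsov--Addington--Thomas sense --- the moving scheme of $LG(X)$ acquires extra components that should be identified combinatorially with the moving scheme of $LG(X_{10})$ along a matching NL divisor in the moduli of $X_{10}$'s. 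The infinite series asserted in the conjecture is then produced by varying the admissible discriminant $d$, in parallel with Hassett's list of NL loci of rational cubics.

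Once these two moving schemes are shown to lie in a common component of the big LG stack, the categorical principle established earlier in the section --- that a shared LG moduli space yields a Sarkisov link between the associated Mori fibrations, and hence a birational map between the Fano fourfolds --- completes the argument. The hardest step will be the matching of the moving schemes: one must control how the noncommutative K3 deforms inside the compactified LG moduli and verify that the extra exceptional pieces on both sides line up on the nose, and not merely Hodge-theoretically. This will likely require upgrading Kuznetsov's description of $\mathcal{A}_X$ to a statement about families of LG potentials, combined with homological projective duality and the Stability Hodge Structure machinery described above.
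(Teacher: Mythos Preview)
The statement you are attempting to prove is explicitly labeled a \emph{Conjecture} in the paper, and the paper does not offer a proof. What follows the conjecture in the text is a one-paragraph heuristic: the authors remark that the series should correspond to the cases where the moving scheme at infinity is associated with a \emph{commutative} $K3$ surface, and they invoke \cite{DKK} for the principle that a deformation between moduli of Landau--Ginzburg models yields a Sarkisov link. That is the full extent of what the paper provides --- motivation, not argument.

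Your outline is faithful to that heuristic and to the three-dimensional prototype (the cubic/$X_{14}$ case), and the strategy you describe --- building $LG(X)$ and $LG(X_{10})$ with a $K3$ component in the fiber at infinity, fixing that component while letting the remaining boundary play the role of the moving scheme, then matching along Noether--Lefschetz loci where the $K3$ becomes geometric --- is exactly the mechanism the paper gestures at. You also correctly identify the hard step: showing that the moving schemes on the two sides genuinely land in a common component of the LG moduli stack, not merely that the Hodge-theoretic shadows agree. But be aware that none of the ingredients you invoke as established (the compactified $LG$ models for the cubic fourfold and for $X_{10}$ with the required boundary structure, the conjectural SOD for $X_{10}$, the identification of the moving schemes) are actually proved in the paper or in the cited references; several are themselves conjectures stated nearby. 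So your proposal is a plausible roadmap for attacking an open problem, consistent with the authors' intentions, rather than a proof to be checked against an existing one.
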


\begin{table}[h]
  \begin{center}
    \begin{tabular}[t]{|c|c|}
\hline
\begin{minipage}[c]{0.2\nanowidth}
\centering
\medskip

Dim.

\medskip

\end{minipage}
&
\begin{minipage}[c]{2\nanowidth}
\medskip

\begin{center}

Landau--Ginzburg moduli behavior

\end{center}

\medskip

\end{minipage}
\\
\hline \hline
\begin{minipage}[c]{0.2\nanowidth}
\medskip

\begin{center}

3

\end{center}

\medskip

\end{minipage}
&
\begin{minipage}[c]{2\nanowidth}
\medskip

\begin{center}

%$\includegraphics[width=7cm]{SSARK1.eps}$
$\includegraphics[width=7cm]{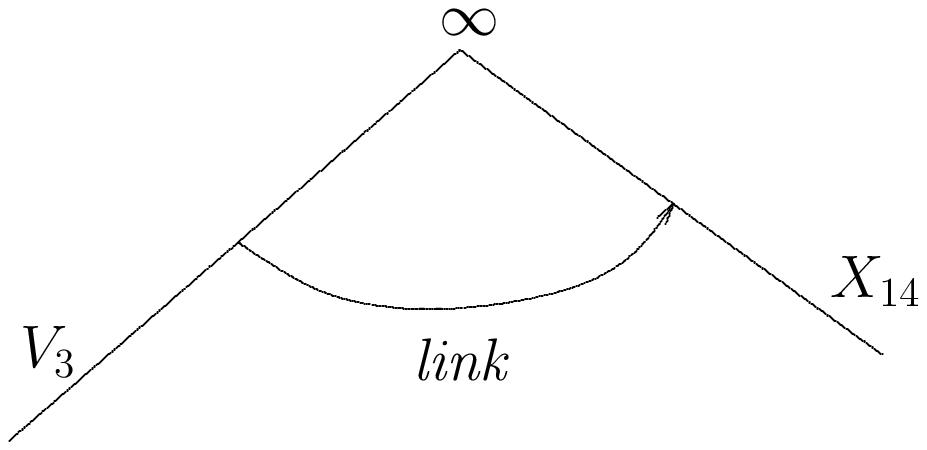}$

\end{center}

\medskip

\end{minipage}
\\ \hline
\begin{minipage}[c]{0.2\nanowidth}
\medskip

\begin{center}

4

\end{center}

\medskip

\end{minipage}
&
\begin{minipage}[c]{2\nanowidth}
\medskip

\begin{center}

%$\includegraphics[width=7cm]{SSARK2.eps}$
$\includegraphics[width=7cm]{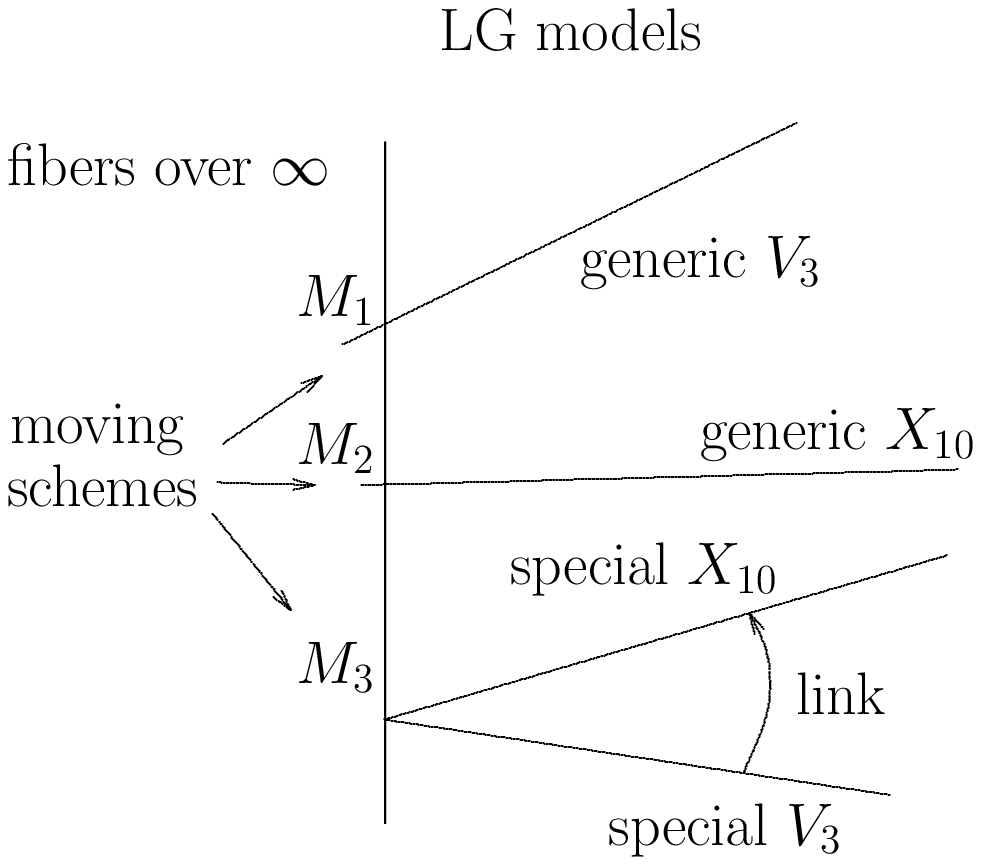}$

\end{center}

\medskip

\end{minipage}
\\ \hline
    \end{tabular}
    \caption{Non-commutative Sarkisov program.}
    \label{tab:SSARK}
  \end{center}
\end{table}

This series corresponds to cases when the moving scheme at infinity is associated with commutative $K3$ surface. According to \cite{DKK} such  a deformation between moduli of Landau--Ginzburg models implies  the existence of a Sarkisov links between such cubics and $X_{10}$. We will return to  rationality questions  in the next subsection. We would like to mention here that a generalization of  homological projective duality of Kuznetsov's  arrives at

\begin{conj}
Let $X$ be a smooth 4-dimensional  Kuechle manifold (see~\cite{KUECH})
Then
$$D^b(X)=\langle D^b(K3), E_1,  E_2, \ldots, E_n\rangle.$$
\end{conj}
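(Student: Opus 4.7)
The plan is to adapt Kuznetsov's strategy for the cubic fourfold, combined with the Landau--Ginzburg moduli picture developed in the preceding subsections, and to proceed case-by-case through Kuechle's list of candidate Fano fourfolds of index 1 inside Grassmannians.

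First I would recall that each Kuechle fourfold $X$ is cut out by a section of a homogeneous vector bundle on a Grassmannian $G = \mathrm{Gr}(k,n)$. The natural starting point is the Fourier--Mukai/Orlov-type restriction of Kapranov's exceptional collection on $G$: restricting the relevant block of Kapranov bundles to $X$ yields an exceptional collection $E_1,\ldots,E_n$ in $D^b(X)$, and orthogonal projection produces a residual subcategory $\mathcal{A}_X := \langle E_1,\ldots,E_n\rangle^\perp$. The target statement becomes the identification of $\mathcal{A}_X$ with $D^b$ of a (possibly noncommutative) K3 surface. So the decomposition itself is formal; the content is the K3-ness of $\mathcal{A}_X$.

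Next, to pin down $\mathcal{A}_X$ I would combine two inputs. On the B-side, apply Kuznetsov's homological projective duality: each Kuechle fourfold is a linear (or zero-locus) section of a homogeneous variety whose HP dual, when it exists, should have a K3 surface (commutative or twisted) appearing in the dual picture. The Serre functor and Hochschild homology computation on $\mathcal{A}_X$ would then show that $\mathcal{A}_X$ is a 2-Calabi--Yau category of the correct Hodge numbers, i.e.\ a noncommutative K3. On the A-side, I would use the LG moduli description from the previous subsection: the moving scheme at infinity of $LG(X)$ should, by the same analysis applied for the cubic fourfold and for $X_{10}$, cut out a K3 surface, and the Fukaya--Seidel category of the complement of the K3-part gives the exceptional block $E_1,\ldots,E_n$. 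Comparing the two sides via HMS for the fibers of the LG model (which is of Shioda--Inose type by the analogue of Theorem~\ref{theorem:Shioda-Inose} in one higher dimension) identifies $\mathcal{A}_X$ with $D^b(K3,\alpha)$ for a Brauer class $\alpha$ read off from the moving scheme.

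The main obstacle, in my view, is that HPD is not yet established in the generality needed for all Kuechle manifolds: some of the ambient homogeneous bundles are not covered by known HPD theorems, so one may have to either extend HPD or produce the K3 category by direct construction (e.g.\ via a matrix factorization / LG-model computation, matching Hochschild (co)homology with that of a K3). A secondary obstacle is ruling out that $\mathcal{A}_X$ is commutative of geometric origin versus genuinely twisted; this amounts to computing the Brauer class arising from the moving scheme and will likely depend on the Noether--Lefschetz stratum of $X$, in parallel with the cubic fourfold story of Addington--Thomas. A case-by-case check on Kuechle's list, organized by which homogeneous bundle is used, together with the LG-moduli comparison suggested in Table~\ref{tab:SSARK}, would complete the proof.
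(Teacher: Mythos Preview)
The statement you are attempting to prove is explicitly labeled a \emph{Conjecture} in the paper, and the paper offers no proof whatsoever. The sentence immediately preceding it reads ``We would like to mention here that a generalization of homological projective duality of Kuznetsov's arrives at'', after which the conjecture is stated and the text moves on to the next conjecture. There is nothing in the paper to compare your proposal against; the authors are recording an expectation, not a result.

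What you have written is therefore not a proof but a research plan, and you say as much yourself: you identify that HPD is not established in the required generality, that the Serre functor and Hochschild computations for $\mathcal{A}_X$ would need to be carried out case by case across Kuechle's list, and that the A-side input (an analogue of the Shioda--Inose statement one dimension up, plus the moving-scheme analysis) is itself conjectural in this paper. Each of these is a genuine open problem. The step ``restricting Kapranov's collection to $X$ yields an exceptional collection'' is not automatic either: restriction of an exceptional object need not remain exceptional, and one has to control the relevant $\Ext$ groups on $X$, which is nontrivial for zero loci of higher-rank bundles. So your outline is a reasonable sketch of the shape an eventual argument might take --- and is in the same spirit as the paper's heuristics --- but it is not a proof, and the paper does not claim one.
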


Here $D^b(K3)$ is the  derived category of a generic noncommutative $K3$  surface. As a consequence we have

\begin{conj} There is an infinite series of moduli of Landau--Ginzburg models associated with special  four-dimensional cubics, four-dimensional $X_{10}$'s, and  Kuechle manifolds, which can be deformed one to another.
\end{conj}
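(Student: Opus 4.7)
The plan is to reduce the conjecture to a matching of \emph{moving schemes} at infinity across the three families, and then apply the deformation-theoretic and HMS machinery already developed in the paper. Concretely, for each of the three classes of fourfolds (special cubics $X\subset\PP^5$, special $X_{10}$, and Kuechle manifolds $Y$), the semiorthogonal decompositions $D^b(X)=\langle D^b(K3),E_1,\dots,E_n\rangle$ (known for cubics by Kuznetsov, conjectured for $X_{10}$ and Kuechle above) pick out a distinguished $K3$-type component. On the mirror side, this component should correspond, via the Batalin--Vilkovisky/$\mathrm{Ext}$ computation of the tangent stack to the LG moduli, to a subscheme of the fiber at infinity whose deformations govern a $K3$-type factor. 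I would first make this correspondence precise: show that the moving scheme controlling deformations of the LG model of $X$ canonically contains, as an irreducible component, the moving scheme controlling deformations of the mirror of a generic noncommutative $K3$ surface, with an additional finite collection of marked points encoding the exceptional objects $E_i$.

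Next, I would construct the common ambient moduli space. Using the stack $\X_{Sec(A)}$ and the Lafforgue-type universal hypersurface $\X_{Laf(A)}$, together with the smooth stack of LG models produced by the Batalin--Vilkovisky complex (fixing only part of the fiber at infinity), I would assemble a single moduli stack $\M$ whose points parametrize LG models with fiber at infinity containing a prescribed commutative $K3$ configuration plus a varying finite collection of marked exceptional points. The special cubic fourfolds, the special $X_{10}$, and the Kuechle manifolds would each appear as a locally closed stratum of $\M$ determined by the number and incidence of the marked points (three, four, and $n$ respectively), matching the counts of exceptional objects in the SODs above. Within $\M$ one then produces explicit one-parameter families interpolating between these strata by letting the marked points collide, bubble off, or move onto/off the $K3$ component; this is the ``non-commutative Sarkisov'' wall-crossing sketched in Table~\ref{tab:SSARK}.

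To obtain an \emph{infinite} series, I would intersect the above construction with the Noether--Lefschetz loci in the moduli of the commutative $K3$ surface governing the fixed component at infinity: for each Noether--Lefschetz divisor (indexed by a discriminant/lattice polarization), one gets a special locus of cubic fourfolds, $X_{10}$'s, and Kuechle fourfolds whose LG mirrors share the same commutative $K3$ component, and the marked points can be moved among the three strata inside a single connected family. By the theorem quoted from \cite{DKK} that deformations of LG moduli translate to Sarkisov links, each such Noether--Lefschetz family yields a Sarkisov-type link, and hence a deformation of LG moduli, between the three types of fourfolds.

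The main obstacle is the input semiorthogonal decomposition for $X_{10}$ and Kuechle manifolds: proving $D^b(Y)=\langle D^b(K3),E_1,\dots,E_n\rangle$ for Kuechle manifolds, and identifying the $K3$ component as a (generically) noncommutative deformation of the one appearing for cubics. Without an extension of homological projective duality in the style of Kuznetsov, this identification can only be established case-by-case. A secondary technical difficulty is making precise the claim that adding/removing an exceptional object on the B-side corresponds to adding/removing a marked point in the moving scheme on the A-side; this should follow from the $\mathrm{Ext}$-description of the tangent stack to $\M$ given in the previous subsection, but the deformation-obstruction calculation controlling the transition between the strata (in particular, checking smoothness of $\M$ along the walls) is where the real work lies. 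Everything else, granting the conjectural SODs and the established LG moduli formalism, is a matter of assembling the pieces along Noether--Lefschetz strata.
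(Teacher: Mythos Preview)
The statement you are attempting to prove is labeled a \emph{Conjecture} in the paper, and the paper offers no proof of it. It is presented as an expected consequence of two earlier conjectures (the SOD of a smooth four-dimensional $X_{10}$ and the SOD of a Kuechle fourfold, both containing a noncommutative $K3$ piece), together with the heuristic picture that specializing to a commutative $K3$ along Noether--Lefschetz loci should allow the LG moduli for cubics, $X_{10}$'s, and Kuechle fourfolds to be deformed into one another. There is therefore nothing in the paper to compare your argument against: your proposal is a research programme, not a reconstruction of an existing proof.

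That said, your outline is faithful to the paper's heuristics --- moving schemes at infinity encoding the $K3$ component plus marked points for the exceptional objects, a common ambient LG moduli stack, wall-crossing between strata, and the Noether--Lefschetz indexing producing the infinite series --- and you correctly flag the genuine obstacles. The two you name are exactly the missing ingredients the paper leaves open: (i) the SODs for $X_{10}$ and Kuechle fourfolds are themselves only conjectural here, and (ii) the dictionary ``add/remove an exceptional object $\leftrightarrow$ add/remove a marked point in the moving scheme'' is asserted in the paper but not established beyond the del Pezzo and a few threefold cases. Until both of those are proven, the argument cannot close, which is precisely why the authors state the result as a conjecture rather than a theorem.
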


This conjecture suggests that rationality question for Kuechle manifolds can be treated similarly as the questions for  four dimensional cubic  and $X_{10}$ --- see the next subsection.

\subsection{Invariants}

In this subsection we introduce two types of invariants, which are connected with SHS and moduli spaces of
Landau--Ginzburg models.

The first type is a global invariant --- Orlov spectra of a category.
It was conjectured in \cite{BFK} that it is an invariant measuring nonrationality.   Its relation to Landau--Ginzburg models was emphasized in \cite{KP},
\cite{IKP}, \cite{IKS}. In this subsection we relate it to the Hasset---Kuznetsov---Tschinkel  program --- a program relating the Noether--Lefschetz components to rationality of 4-dimensional cubic.

The second type of invariant is of local nature --- the local singularity of the Landau--Ginzburg models. We relate these invariants to stability conditions.
We suggest that they play the role of discrepancies and thresholds in the Kawamata's correspondence described in Table~\ref{tab:CPINTRO}. In other words these invariants measure if two Landau--Ginzburg moduli spaces can be deformed one to another and according to
\cite{DKK} if there are  Sarkisov links connecting the Fano manifolds from the A side.

The noncommutative Hodge structures were introduced by  Kontsevich, Katzarkov, and Pantev in \cite{KKP}  as  means of bringing
the techniques and tools of Hodge theory into the categorical and
noncommutative realm.  In the classical setting, much of the
information about an isolated singularity is recorded by means of
the Hodge spectrum, a set of rational eigenvalues of the monodromy
operator.  The Orlov spectrum (defined below), is a categorical
analogue of this Hodge spectrum appearing in the works of Orlov (\cite{O}) and
Rouquier (\cite{R}). The missing numbers in
the spectra are called gaps.

Let $\mathcal T$ be a triangulated category.  For any $G \in
\mathcal T$ denote by $\langle G \rangle_0$ the smallest full
subcategory containing $G$ which is closed under isomorphisms,
shifting, and taking finite direct sums and summands. Now
inductively define $\langle G \rangle_n$ as the full subcategory
of objects, $B$, such that there is a distinguished triangle, $X
\to B \to Y \to X[1]$, with $X \in \langle G \rangle_{n-1}$ and $Y
\in \langle G \rangle_0$.

\begin{definition}
Let $G$ be an object of a triangulated category $\mathcal{T}$.  If
there is some number $n$ with $\langle G \rangle_{n} = \mathcal T$, we set
\begin{displaymath}
 t(G):= \text{min } \lbrace n \geq 0 \ | \ \langle G
 \rangle_{n} = \mathcal T \rbrace.
\end{displaymath}
Otherwise, we set $t(G) := \infty$.  We call $t(G)$
the \emph{generation time} of $G$. If $t(G)$ is finite,
we say that $G$ is a \emph{strong generator}. The \emph{Orlov
spectrum} of $\mathcal T$ is the union of all possible generation
times for strong generators of $\mathcal T$.  The \emph{Rouquier
dimension} is the smallest number in the Orlov spectrum.  We say
that a triangulated category, $\mathcal T$, has a \emph{gap} of
length $s$ if $a$ and $a+s+1$ are in the Orlov spectrum but $r$
is not in the Orlov spectrum for $a < r < a+s+1$.
\end{definition}

%We recall the following result  from \cite{BFK}.
The first connection to Hodge theory appears in the form of the
following theorem.
\begin{teo}[see \cite{BFK}]
Let $X$ be an algebraic variety possessing an isolated
hypersurface singularity. The Orlov spectrum of the category of
singularities of $X$ is bounded by twice the embedding dimension
times the Tjurina number of the singularity.
\label{thm:isohypspecbound}
\end{teo}

We also recall the following conjecture which will play an important role in our considerations.

\begin{conj}[see \cite{BFK}] Let $X$ be a rational Fano manifold of  dimension $n>2$.   Then a gap of spectra of $D^b(X)$ is less or equal to $n-3$.
\end{conj}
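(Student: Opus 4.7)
The plan is to leverage weak factorization together with Orlov's blow-up formula. Since $X$ is rational, it is connected to $\PP^n$ by a sequence of smooth blow-ups and blow-downs (Abramovich--Karu--Matsuki--W\l odarczyk). Orlov's formula then gives, at each step, a semiorthogonal decomposition whose ``new'' pieces are copies of $D^b(Z)$ for $Z$ a smooth center of dimension at most $n-1$. Iterating up and down the factorization produces a semiorthogonal decomposition of $D^b(X)$ built out of $D^b(\PP^n)$ together with derived categories of smooth projective varieties of dimension at most $n-1$. The spectrum of $D^b(\PP^n)$ is concentrated at $n$ (with full exceptional collection of length $n+1$), so it contributes no gaps; the burden is to show the decomposition cannot introduce large gaps either.

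The key technical step is a gluing lemma for Orlov spectra, in the spirit of \cite{BFK}: for a semiorthogonal decomposition $\mathcal{T}=\langle \mathcal{A},\mathcal{B}\rangle$, if $G_A$ and $G_B$ are strong generators with $t(G_A)=a$, $t(G_B)=b$, then $G_A\oplus G_B$ generates $\mathcal{T}$ in at most $a+b+1$ steps (the extra step comes from resolving the gluing Ext-bimodule). Moreover, by appropriate mutations inside $\mathcal{A}$ (respectively $\mathcal{B}$) one can interpolate between the generation times realized in the spectra of the two pieces, so the spectrum of $\mathcal{T}$ densely fills an interval governed by the spectra of $\mathcal{A}$ and $\mathcal{B}$ together with their Ext-pairing. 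Iterating this lemma along the tower of SODs coming from weak factorization translates bounds on gaps of the pieces into a bound on gaps of $D^b(X)$.

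I would then run an induction on $n$. The base cases $n\le 2$ (curves and surfaces) have no gaps, since Rouquier dimension is at most $2$ and the spectra are known to be intervals by direct exceptional-collection or tilting arguments. For the inductive step, the blow-up centers that appear in the factorization have dimension $\le n-1$, so by hypothesis their gaps are $\le n-4$; combined with $D^b(\PP^n)$, which contributes a spectrum at height $n$, and the gluing lemma, the resulting spectrum of $D^b(X)$ has gaps of length at most $n-3$. Conceptually this matches the philosophy of Table~\ref{tab:CPINTRO}: wall-crossing in the LG moduli (movement of subschemes in the fiber at infinity) corresponds to crossing a one-step SOD, and a rational $X$ sits in a component of LG moduli that is reachable from the mirror of $\PP^n$ by such crossings.

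The main obstacle is the precision of the gluing lemma. It is easy to build generators of $\mathcal{T}$ from generators of $\mathcal{A}$ and $\mathcal{B}$, but controlling their generation time, and crucially showing that one can realize a \emph{dense} set of generation times as one varies through mutations, requires understanding how the Ext-gluing bimodule interacts with the filtration defining $\langle G\rangle_n$. A natural way to make this precise, aligned with the rest of the paper, is to parametrize generators by points of the moduli of stability conditions and use the SHS ``twistor'' family to control how $t(G)$ varies; the moving scheme at infinity of the LG model then plays the role of discrepancy data that obstructs (or permits) filling a putative gap. A secondary difficulty is making weak factorization interact cleanly with generation times, since blowing up and then blowing down can a priori destroy control; here one should work with the full zig-zag at once and use Bondal--Orlov type rigidity to ensure the intermediate gluing Exts stay bounded.
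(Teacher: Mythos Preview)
The statement is a \emph{conjecture}, not a theorem: the paper offers no proof, only records it (citing \cite{BFK}) as an expected obstruction to rationality. There is therefore no proof in the paper to compare your proposal against; the question is whether your argument actually closes the problem. It does not, for the following reasons.

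First, weak factorization does \emph{not} produce a semiorthogonal decomposition of $D^b(X)$. The zig-zag connects $X$ to $\PP^n$ through intermediate varieties $Y_i$, and Orlov's formula at a blow-up $Y_{i+1}\to Y_i$ gives an SOD of $D^b(Y_{i+1})$ with ambient piece $D^b(Y_i)$. When the arrow points the other way you are \emph{discarding} admissible pieces, not acquiring them. There is no mechanism to transport these SODs across a blow-down, so your sentence ``Iterating up and down the factorization produces a semiorthogonal decomposition of $D^b(X)$ built out of $D^b(\PP^n)$ together with \ldots'' is precisely the step that fails. What weak factorization plus Orlov yields is a comparison inside the Grothendieck ring of pretriangulated categories, not an SOD of $D^b(X)$.

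Second, even granting some SOD, the ``gluing lemma'' you need is not a lemma but the open heart of the matter. The upper bound $t(G_A\oplus G_B)\le a+b+1$ is standard, but your density claim---that mutations inside $\mathcal{A}$ and $\mathcal{B}$ realize enough intermediate generation times in $\mathcal{T}=\langle\mathcal{A},\mathcal{B}\rangle$ to bound the gaps of $\mathcal{T}$ by those of the pieces---is exactly what is unknown. Gaps in Orlov spectra are not known to be subadditive (or bounded in any useful way) under SOD; the phantom phenomena discussed elsewhere in the paper show that pieces can interact in ways invisible to the individual spectra. Your ``main obstacle'' paragraph correctly locates the difficulty but then gestures at the SHS twistor family and moving schemes without extracting any actual bound; that apparatus is conjectural in the paper as well.

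Third, your induction is stated for rational Fano manifolds, but the centers $Z$ appearing in a weak factorization are arbitrary smooth projective subvarieties---neither Fano nor rational in general---so the inductive hypothesis does not apply to $D^b(Z)$.
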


After this brief review of theory of spectra and gaps we connect them with
the SHS and moduli of Landau--Ginzburg models.

\begin{conj}[see \cite{KKP}] The monodromy of the Landau--Ginzburg models for Fano manifold $X$ determines the gap of spectra of $D^b(X)$.

\end{conj}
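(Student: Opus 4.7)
The plan is to interpret both sides of the conjectured correspondence on the Fukaya--Seidel side via Homological Mirror Symmetry, and then to extract the gaps of the Orlov spectrum directly from the monodromy data of the LG model. Writing $w\colon Y\to\CC$ for a LG model of $X$ and granting HMS in the form $D^b(X)\simeq FS(w)$, the spectrum and gaps of $D^b(X)$ coincide with those of $FS(w)$, so the whole problem reduces to reading the gaps off the B-side of the LG model, where monodromy is intrinsic data.

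First, I would use the distinguished basis of thimbles $L_1,\ldots,L_N$ anchored at the critical values of $w$, which provides a full exceptional collection generating $FS(w)$. Generation times of subsums $\bigoplus_{i\in S}L_i$ then translate into a combinatorial length: the minimal number of cone steps needed to reconstruct every $L_j$ from $\{L_i\}_{i\in S}$, with allowed elementary moves dictated by the Picard--Lefschetz reflections about the vanishing cycles and by the monodromy at infinity $T_\infty$. An upper bound on these lengths would come from Theorem~\ref{thm:isohypspecbound} applied critical-point by critical-point: the generation time of the local vanishing-cycle category at each critical point is controlled by twice the embedding dimension times the Tjurina number, and Tjurina numbers are recoverable from the Jordan data of the local monodromies $T_i$. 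A lower bound producing an actual gap must come from obstructions: one would show that if the monodromy representation has spectral clusters separated by a ``forbidden'' range of eigenvalues, then no strong generator $G$ can achieve a generation time in the corresponding range, because the cone complex that would realize such a $G$ would force a forbidden eigenvalue to appear in some intermediate $\langle G\rangle_r$.

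The main obstacle, as I see it, is upgrading this linear--monodromy picture to an $A_\infty$-categorical statement strong enough to pinpoint gaps rather than merely to bound spectra. A natural framework is the Stability Hodge Structure of \cite{KKP}: the monodromy data at the boundary of the moduli stack of LG models determines a limit mixed Hodge structure, whose weight filtration one would hope to match with the filtration of $FS(w)$ by the subcategories $\langle G\rangle_n$. Producing a precise dictionary of this kind, and in particular showing that a weight-theoretic gap forces a categorical Orlov gap, is the crux of the conjecture; the preceding steps are comparatively formal once that dictionary is in place. I would first test the correspondence on the del Pezzo case of Subsection~\ref{subsection:2dim basic links}, where the monodromy of the elliptic fiber at infinity is completely explicit, and then attempt the threefold Fano cases of Table~\ref{table}, where the moving-scheme analysis of the previous subsection should give direct access to $T_\infty$ and hence a computable prediction for the gap.
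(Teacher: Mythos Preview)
The statement you are attempting to prove is labelled a \emph{Conjecture} in the paper, not a theorem, and the paper offers no proof of it. Immediately after stating it the authors write only that ``This conjecture was partially verified in \cite{KP}, \cite{IKP}, \cite{IKS}'' and then summarize evidence in a table. There is therefore no ``paper's own proof'' to compare your proposal against.

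Your proposal is not a proof either, and to your credit you say so explicitly: you identify the crux as producing a dictionary between the weight filtration of a limiting mixed Hodge structure and the filtration of $FS(w)$ by the subcategories $\langle G\rangle_n$, and you note that everything else is ``comparatively formal once that dictionary is in place''. That dictionary is precisely the open content of the conjecture. The surrounding scaffolding you propose---transporting the problem to $FS(w)$ via HMS, using thimbles as an exceptional collection, invoking Picard--Lefschetz moves and Theorem~\ref{thm:isohypspecbound} for local upper bounds---is reasonable heuristics and is in the spirit of the paper's own discussion (monodromy at infinity, moving schemes, SHS), but none of it yields the lower bound needed to certify a gap. In particular, your suggested mechanism that ``spectral clusters separated by a forbidden range of eigenvalues'' of the monodromy should obstruct generation times in a corresponding range has no established justification: Orlov-spectrum gaps are about cone-length in a triangulated category, and there is at present no theorem linking eigenvalue gaps of a linear monodromy operator to forbidden generation times. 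Until such a link is supplied, the proposal remains a program rather than a proof, which is exactly the status the paper assigns to the statement.
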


This conjecture was partially verified in \cite{KP}, \cite{IKP}, \cite{IKS}.

We record our findings in Table~\ref{tab:LGF3}.

\begin{table}[h]
  \begin{center}
    \begin{tabular}[t]{|c|c|c|c|c|}
\hline
\begin{minipage}[c]{0.2\nanowidth}
\centering
\medskip

Dim.

\medskip

\end{minipage}
&
\begin{minipage}[c]{0.6\nanowidth}
\medskip

\begin{center}

Examples

\end{center}

\medskip

\end{minipage}
&
\begin{minipage}[c]{0.3\nanowidth}
\centering
\medskip

Hodge diamond

\medskip

\end{minipage}
&

\begin{minipage}[c]{\nanowidth}
\medskip

\begin{center}

Categories

\end{center}

\medskip

\end{minipage}
&
\begin{minipage}[c]{0.5\nanowidth}
\medskip

\begin{center}

Invariants

\end{center}

\medskip

\end{minipage}
\\ \hline\hline
\begin{minipage}[c]{0.2\nanowidth}
\centering
\medskip

2

\medskip

\end{minipage}
&
\begin{minipage}[c]{0.6\nanowidth}
\medskip

\begin{center}

Rational blow-down $X_1$ of
$\CC\PP^2\#6\overline{\CC\PP}^2$

\end{center}

\medskip

\end{minipage}
&
\begin{minipage}[c]{0.3\nanowidth}
\centering
\medskip

$
\begin{array}{c}
  1 \\
  7 \\
  1
\end{array}
$

\medskip

\end{minipage}
&
\begin{minipage}[c]{\nanowidth}
\medskip

\begin{center}

$D^b(X_1)=\langle E_1,\ldots, E_9,\mathcal A\rangle$, $K^0(\mathcal A)=0$

\end{center}

\medskip

\end{minipage}
&
\begin{minipage}[c]{0.5\nanowidth}
\medskip

\begin{center}

$\pi_1(M_{LG}\setminus D)$

\end{center}

\medskip

\end{minipage}
\\ \hline
\begin{minipage}[c]{0.2\nanowidth}
\centering
\medskip

2

\medskip

\end{minipage}
&
\begin{minipage}[c]{0.6\nanowidth}
\medskip

\begin{center}

Barlow surface
$X_2=\CC\PP^2\#8\overline{\CC\PP}^2$

\end{center}

\medskip

\end{minipage}
&
\begin{minipage}[c]{0.3\nanowidth}
\centering
\medskip

$
\begin{array}{c}
  1 \\
  9 \\
  1
\end{array}
$

\medskip

\end{minipage}
&
\begin{minipage}[c]{\nanowidth}
\medskip

\begin{center}

$D^b(X_2)=\langle E_1,\ldots, E_{11},\mathcal A\rangle$, $K^0(\mathcal A)=0$

\end{center}

\medskip

\end{minipage}
&
\begin{minipage}[c]{0.5\nanowidth}
\medskip

\begin{center}

$\pi_1(M_{LG}\setminus D)$

\end{center}

\medskip

\end{minipage}
\\ \hline
\begin{minipage}[c]{0.2\nanowidth}
\centering
\medskip

3

\medskip

\end{minipage}
&
\begin{minipage}[c]{0.6\nanowidth}
\medskip

\begin{center}

Cubic threefold $X_1$

\end{center}

\medskip

\end{minipage}
&
\begin{minipage}[c]{0.3\nanowidth}
\centering
\medskip

$
\begin{array}{ccc}
   & 1 &   \\
   & 1 &   \\
 5 &   & 5 \\
   & 1 &   \\
   & 1 &
\end{array}
$
\medskip

\end{minipage}
&
\begin{minipage}[c]{\nanowidth}
\medskip

\begin{center}

$D^b(X_1)=\langle E_1, E_2, \mathcal A\rangle$

\end{center}

\medskip

\end{minipage}
&
\begin{minipage}[c]{0.5\nanowidth}
\medskip

\begin{center}

Monodromy  of Landau--Ginzburg models, gap in spectra is at most 1.

\end{center}

\medskip

\end{minipage}
\\ \hline
\begin{minipage}[c]{0.2\nanowidth}
\centering
\medskip

3

\medskip

\end{minipage}
&
\begin{minipage}[c]{0.6\nanowidth}
\medskip

\begin{center}

Artin--Mumford example $X_2$%with 10 ordinary double points

\end{center}

\medskip

\end{minipage}
&
\begin{minipage}[c]{0.3\nanowidth}
\centering
\medskip

$
\begin{array}{c}
    1    \\
    1    \\
    1    \\
    1
\end{array}
$
\medskip

\end{minipage}
&
\begin{minipage}[c]{\nanowidth}
\medskip

\begin{center}

$D^b(X_2)=\langle E_1,\ldots, E_{10}, \mathcal A\rangle$, $K^0(\mathcal A)=\ZZ_2$

\end{center}

\medskip

\end{minipage}
&
\begin{minipage}[c]{0.5\nanowidth}
\medskip

\begin{center}

Monodromy  of Landau--Ginzburg models, gap in spectra is 0.

\end{center}

\medskip

\end{minipage}
\\ \hline
\begin{minipage}[c]{0.2\nanowidth}
\centering
\medskip

4

\medskip

\end{minipage}
&
\begin{minipage}[c]{0.6\nanowidth}
\medskip

\begin{center}

Cubic fourfold $X$

\end{center}

\medskip

\end{minipage}
&
\begin{minipage}[c]{0.3\nanowidth}
\centering
\medskip

$$
\begin{array}{ccc}
    &  1  &   \\
    &  1  &   \\
  1 & 21  & 1 \\
    &  1  &   \\
    &  1  &
\end{array}
$$

\medskip

\end{minipage}
&
\begin{minipage}[c]{\nanowidth}
\medskip

\begin{center}

$D^b(X)=\langle E_1, E_2, E_3, \mathcal A \rangle$

\end{center}

\medskip

\end{minipage}
&
\begin{minipage}[c]{0.5\nanowidth}
\medskip

\begin{center}

Monodromy of Landau--Ginzburg models, gap in spectra is at most 2.

\end{center}

\medskip

\end{minipage}
\\ \hline
    \end{tabular}
    \caption{Summary.}
    \label{tab:LGF3}
  \end{center}
\end{table}

As it is clear from our construction the monodromy of Landau--Ginzburg models depend on the choice of moving scheme. This suggests that the classical Hodge theory cannot distinguish rationality. We employ the geometry of the moduli spaces of Landau--Ginzburg models in order to do so. These moduli spaces measure the way the pieces in the semiorthogonal decompositions are put together --- this information computes the spectra of a category.

This was first observed in
 \cite{KP}, \cite{IKP}, and \cite{IKS}. Applying the theory of Orlov's spectra to the case of
four-dimensional cubic, four-dimensional $X_{10}$, and Kuechle manifolds we arrive at the following conjecture suggested by Hasset--Kuznetsov--Tschinkel program.

\begin{conj}
\label{conjecture:non-rational fourfolds}
The four-dimensional cubic, four-dimensional $X_{10}$, and Kuechle manifolds are not rational if they do not contain derived categories of commutative K3 surfaces in their semiorthogonal decompositions (see~\cite{KUZ} for cubic fourfold).
\end{conj}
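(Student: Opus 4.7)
The plan is to deduce non-rationality from a gap in the Orlov spectrum, using the conjectural bound recalled above that any rational Fano $n$-fold must have gaps in $D^b$ of length at most $n-3$. For $n=4$ this forces gaps $\leq 1$, so it suffices to exhibit a gap of length $\geq 2$ in $D^b(X)$ whenever $X$ is a four-dimensional cubic, a four-dimensional $X_{10}$, or a Kuechle manifold whose K3-type Kuznetsov component $\mathcal{A}$ fails to be the derived category of a commutative K3 surface.

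First I would fix a semiorthogonal decomposition $D^b(X)=\langle \mathcal{A},E_1,\ldots,E_k\rangle$ in each case, using Kuznetsov's theorem for the cubic fourfold and the conjectural analogues formulated earlier in the paper for $X_{10}$ and for the Kuechle manifolds. Since the $E_i$ are exceptional, the Orlov spectrum of $D^b(X)$ differs from that of $\mathcal{A}$ only by a controlled shift, and the gap question reduces to generation times inside $\mathcal{A}$. To control these I would transport the problem to the B-side: the moduli stack of Landau--Ginzburg models constructed above attaches to $X$ a family whose moving scheme at the fiber at infinity encodes precisely $\mathcal{A}$, and by the monodromy-determines-the-gap conjecture also recalled above the gap of $\mathcal{A}$ is read off from the monodromy of this family. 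The commutative-K3 locus appears as a Noether--Lefschetz divisor in the LG moduli stack; outside it the moving scheme is non-commutative and the local monodromy acquires eigenvalues incompatible with a classical K3 Hodge structure.

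The main obstacle is turning this picture into a true lower bound on the gap: one must identify the monodromy operator explicitly via the Batalin--Vilkovisky dg-model for the LG moduli, localize at the $\mathbb{C}^*$-fixed points to extract the limiting mixed Hodge structure predicted by the conjecture of \cite{KKPS}, and then translate the resulting non-commutative Hodge spectrum into Rouquier-style generation times, using Theorem~\ref{thm:isohypspecbound} to bound the spectrum from below in terms of the singularities at infinity. The delicate point is to verify that in the non-commutative regime the spectrum omits two consecutive integers near the middle, producing a genuine length-$2$ gap rather than a shifted single-step gap.

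Finally, the three families are handled uniformly: by the deformation-equivalence of their LG moduli stacks established in the preceding subsection, any Sarkisov-type link between them would transport gap data across, and the conjectural HMS dictionary ensures that the commutative-versus-non-commutative alternative for $\mathcal{A}$ is preserved under such links. The argument concludes by contrapositive --- if $X$ were rational, the gap bound would force $\mathcal{A}$ to carry the Hodge signature of a commutative K3 surface, placing $X$ in the appropriate Hassett-type Noether--Lefschetz locus; equivalently, absence of a commutative K3 in the SOD yields a gap $\geq 2$, which obstructs rationality.
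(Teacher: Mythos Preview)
The statement you are attempting to prove is labeled a \emph{Conjecture} in the paper, and the paper gives no proof. What follows it is a one-line gloss: the authors remark that the conjecture amounts to saying the gap in the Orlov spectrum of $D^b(X)$ equals $2$ for generic such fourfolds, which combined with the (also conjectural) bound from \cite{BFK} that rational Fano $n$-folds have gaps at most $n-3$ would force non-rationality.

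Your proposal accurately reconstructs this heuristic chain and even articulates where the difficulty lies, but it is not a proof. Every load-bearing step rests on an open conjecture: the rationality gap bound is itself a conjecture from \cite{BFK}; the claim that LG monodromy determines the gap is the conjecture stated immediately before this one; the semiorthogonal decompositions for four-dimensional $X_{10}$ and for Kuechle manifolds are conjectures earlier in the same subsection; and the limiting mixed Hodge structure at $\mathbb{C}^*$-fixed points is the conjecture attributed to \cite{KKPS}. The ``delicate point'' you flag --- that in the non-commutative regime the spectrum actually omits two consecutive integers --- is precisely the unresolved content, and is the reason the authors state this as a conjecture rather than a theorem. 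So what you have written is a faithful summary of the paper's motivating web of conjectures, but there is nothing to compare against a proof, because none exists in the paper.
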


In other words this conjecture implies that the generic of described above  fourfolds is not rational since the gap of their categories of coherent sheaves is equal to two.

In  case the semiorthogonal decompositions contain derived category of commutative K3 surface --- the issue is more delicate and requires the use of a Noether--Lefschetz spectra ---  see \cite{IKP}.

\begin{oss}
We are very grateful to A. Iliev who  has informed us that some checks of Conjecture~\ref{conjecture:non-rational fourfolds} were done by him, Debarre, and Manivel.
\end{oss}

We proceed with a topic which we have started in \cite{IKP} --- how to detect  rationality  when gaps of spectra cannot be used. The example we have considered there was the Artin--Mumford example. Initially it was shown that
the Artin--Mumford example is not rational since  it has a torsion in its third cohomology group. Our conjectural interpretation in \cite{IKP}
is that the Artin--Mumford example is not rational since  it is a conic bundle which contains the derived category of an Enriques surface in the SOD of its derived category. The derived category of an Enriques surface has 10 exceptional objects and a category  $\mathcal A$,  which does not look as a category of a curve, in its SOD. We conjecture in
 \cite{IKP} that derived category of the Artin--Mumford example has no gap in it spectra but it is the moving scheme which determines its nonrationality. We also exhibit the connection between  the category  $\mathcal A$ and its moving scheme --- see also \cite{IKUZ}.

We bring a totally new prospective to rationality questions  ---  the parallel  of spectra and gaps of categories with topological superconductors. Indeed if we consider  generators as Hamiltonians and generation times as states of matter we get a far-reaching parallel.
The first application of this parallel was a prediction of existence of phantom categories which we have defined above.
%- nontrivial categories which have trivial $K$ - theories.
The phantoms are the equivalent of topological superconductors in the above parallel, which,  we conjecture, allows us to compute spectra in the same way as Turaev--Viro procedure allow us to compute  topological states in the Kitaev--Kong models. In fact the parallel produces a new spectra code which can be used in quantum computing opening new horizons for research.  We outline this parallel in Table~\ref{tab:WITT}.

\begin{table}[h]
  \begin{center}
    \begin{tabular}[t]{|c|c|}
\hline
\begin{minipage}[c]{1.2\nanowidth}
\centering
\medskip

Topological states of matter.

\medskip

\end{minipage}
&
\begin{minipage}[c]{1.2\nanowidth}

\medskip

\centering

%\begin{center}

Gaps and phantoms.

%\end{center}

\medskip

\end{minipage}
\\
\hline \hline
\begin{minipage}[c]{1.2\nanowidth}

\centering

\medskip

%\begin{center}

Hamiltonians.

%\end{center}

\medskip

\end{minipage}
&
\begin{minipage}[c]{1.2\nanowidth}

\centering

\medskip

%\begin{center}

Generators of the category.

%\end{center}

\medskip

\end{minipage}
\\ \hline
\begin{minipage}[c]{1.2\nanowidth}

\centering

\medskip

%\begin{center}

Topological states.

%\end{center}

\medskip

\end{minipage}
&
\begin{minipage}[c]{1.2\nanowidth}

\centering

\medskip

%\begin{center}

Generation time.

%\end{center}

\medskip

\end{minipage}
\\ \hline
\begin{minipage}[c]{1.2\nanowidth}

\centering

\medskip

%\begin{center}

3-manifolds
in Kitaev--Kong model.

%\end{center}

\medskip

\end{minipage}
&
\begin{minipage}[c]{1.2\nanowidth}

\centering

\medskip

%\begin{center}

Singularities of Landau--Ginzburg model.

%\end{center}

\medskip

\end{minipage}
\\ \hline
\begin{minipage}[c]{1.2\nanowidth}

\centering

\medskip

%\begin{center}

Topological superconductors.

%\end{center}

\medskip

\end{minipage}
&
\begin{minipage}[c]{1.2\nanowidth}

\centering

\medskip

%\begin{center}

Phantoms as limits of gaps.

%\end{center}

\medskip

\end{minipage}
\\ \hline
    \end{tabular}
    \caption{Gaps, spectra and topological superconductors.}
    \label{tab:WITT}
  \end{center}
\end{table}

The existence of phantoms was  against the expectations of the founding fathers of derived categories.   Today it is known  that the phantoms are everywhere
in the same way as the topological insulators and topological superconductors --- a truly  ground --- breaking  unconventional parallel.
We anticipate striking applications of phantoms in the study of rationality of algebraic varieties. We briefly outline one of these applications.
%We consider a  conic bundle ---  an example by Sarkisov, see
%\cite{SAR} and Table~\ref{tab:CGBT}.
We consider another  conic bundle ---  Sarkisov's example, see
\cite{SAR} and Table~\ref{tab:CGBT}. This example can be described
as follows --- we start with an irreducible singular plane curve
$C_{sing}$  in $\PP^2$ of degree $d\geqslant 3$  that has exactly
$(d-1)(d-2)/2-1$ ordinary double points (such curves exists for
every $d\geqslant 3$). Then we blow up $\PP^2$ at the singular
points of $C_{sing}$. Denote by $S$ the obtained surface and by
$C$ the proper transform of the curve $C_{sing}$. Then $C$ is a
smooth elliptic curve (easy genus count). Let
$\tau\colon\widetilde{C}\to C$ be some unramified double cover. Then
it follows from \cite[Theorem~5.9]{SAR} there exists a smooth
threefold $X$ of Picard rank $\mathrm{rk}(\mathrm{Pic}(S))+1$ with
a morphism $\pi\colon X\to S$ whose general fiber is
$\mathbb{P}^1$, i.e. $\pi$ is a conic bundle, such that $C$ is the
discriminant curve of $\pi$, and $\tau$ is induced by
interchanging components of the fibers over the points of $C$.
Moreover, it follows from \cite[Theorem~4.1]{SAR} that $X$ is not
rational if $d\geqslant 12$. On the other hand, we always have
$H^{3}(X,\mathbb{Z})=0$, since $C$ is an elliptic curve. Note that
birationally $X$ can be obtained as a degeneration of a standard
conic bundle over $\mathbb{P}^2$ whose discriminant curve is a
smooth curve of degree $d$. On the side of Landau--Ginzburg models
we can observe the following. The mirrors of conic bundles are
partially understood --- see \cite{AAK}. The degeneration
procedure on the B side amounts to conifold transitions on the A
side. These conifold transitions define    a moving scheme for the
Landau--Ginzburg model which suggests the following conjecture.

\begin{conj} The SOD of $D^b(X)$ %the derived category of Sarkisov example
contains a phantom category --- a nontrivial category with trivial $K^0$ group.

\end{conj}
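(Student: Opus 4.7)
The plan is to imitate the strategy of \cite{IKP} for the Artin--Mumford example and extract the phantom as the residual piece of $D^b(X)$ after stripping off the parts that are fully detected by the base $S$ and by the discriminant double cover $\tau$. First I would apply Kuznetsov's semiorthogonal decomposition for the conic bundle $\pi\colon X\to S$,
$$D^b(X) = \langle D^b(S,\B_0),\, \pi^*D^b(S) \rangle,$$
where $\B_0$ is the sheaf of even Clifford algebras of the quadratic form defining $\pi$. Because $S$ is a blow-up of $\PP^2$ at $(d-1)(d-2)/2-1$ points, $D^b(S)$ has a full exceptional collection of length $\mathrm{rk}(\mathrm{Pic}(S))+1$, so the $\pi^*D^b(S)$ block contributes only exceptional objects; all the interesting geometry is concentrated in $D^b(S,\B_0)$.

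Next I would decompose $D^b(S,\B_0)$ further. Away from $C$ the Azumaya algebra $\B_0$ is Morita trivial and splits off another exceptional block. The essential remaining piece is supported on the discriminant $C$ and, via $\tau\colon\widetilde C\to C$, is closely related to $D^b(\widetilde C)$. The crucial numerical observation is that because $g(C)=1$ and $\tau$ is \'etale, the Prym variety $P(\widetilde C/C)$ has dimension $g(C)-1 = 0$; this matches the vanishing $H^3(X,\ZZ)=0$ and means that the piece that would normally encode intermediate Jacobian information collapses, so that after mutating away the corresponding component of $D^b(\widetilde C)$ one obtains a residual subcategory $\A\subset D^b(X)$ which is the candidate phantom.

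For the K-theoretic vanishing I would compute $K^0(X)$ via the Chern character into $\mathrm{CH}^*(X)_{\QQ}$ and the Leray spectral sequence for $\pi$, whose graded pieces come only from $\mathrm{CH}^*(S)_{\QQ}$ and $\mathrm{CH}^*(\widetilde C)_{\QQ}$; subtracting the contributions of the exceptional blocks and of the $\widetilde C$-piece identified above leaves nothing, so $K^0(\A)\otimes\QQ=0$. Nontriviality of $\A$ I would deduce from Sarkisov's theorem: if $\A$ vanished, $D^b(X)$ would reduce to an exceptional collection extended by $D^b$ of an elliptic curve, which by the homological rationality heuristic of Kuznetsov together with the categorical criteria of \cite{IKP} would force $X$ to be rational, contradicting $d\geq 12$. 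On the Landau--Ginzburg side, $LG(X)$ is obtained by conifold transitions from the mirror of the smooth degree-$d$ conic bundle over $\PP^2$, and the associated moving scheme is concentrated at the resulting nodes~--- a zero-dimensional, $K^0$-trivial locus~--- which is the mirror incarnation of the expected phantom.

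The main obstacle will be upgrading the rational vanishing $K^0(\A)\otimes\QQ=0$ to the integral statement $K^0(\A)=0$. The Chern-character argument leaves open the possibility of torsion in $K^0(\A)$, exactly the place where Artin--Mumford-type Brauer classes live; ruling this out for Sarkisov's $X$ requires finer input than Chow groups. The natural route, in the spirit of this section and of \cite{BBS}, is to combine a Hochschild-homology computation for $\A$ with the Stability Hodge Structure of $LG(X)$ described in Section~\ref{subsubsection:MSVHS}, using the fact that the moving scheme is zero-dimensional to force the torsion in $K^0(\A)$ to vanish. Carrying this out rigorously, and simultaneously verifying that $\A$ does not accidentally collapse to zero for special choices of $\tau$, is the delicate technical step of the proof.
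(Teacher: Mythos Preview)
The statement you are attempting to prove is presented in the paper as a \emph{conjecture}, and the paper offers no proof of it. The only justification in the paper is heuristic: it observes that Sarkisov's threefold $X$ arises, birationally, as a degeneration of a standard conic bundle over $\PP^2$ with smooth discriminant of degree $d$, that on the mirror side this degeneration corresponds to conifold transitions in the Landau--Ginzburg model, and that the resulting moving scheme at infinity is of the type the authors associate with phantoms. No categorical argument, no $K$-theory computation, and no semiorthogonal decomposition of $D^b(X)$ beyond the picture in Table~\ref{tab:CGBT} is given. So there is nothing in the paper against which to check your argument step by step.

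Your proposal goes considerably further than the paper and is a reasonable outline of how one might \emph{try} to attack the conjecture, but you should be aware that several of its steps are themselves open or heuristic. The assertion that $\B_0$ is Morita-trivial off $C$ is not automatic (the Brauer class of the conic bundle on $S\setminus C$ need not vanish a priori), and the ``splitting off another exceptional block'' over $S\setminus C$ is not a well-posed operation inside $D^b(S,\B_0)$. Your nontriviality argument is circular in the same way the paper's is: it invokes a ``homological rationality heuristic'' which is precisely the kind of statement the conjecture is meant to support, not something one can use to prove it. And, as you yourself flag, the passage from $K^0(\A)\otimes\QQ=0$ to $K^0(\A)=0$ is the crux and is not addressed by anything in your sketch; the paper does not claim to know how to do this either. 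In short, your write-up is a plausible research plan rather than a proof, and the paper makes no stronger claim than that.
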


This  phantom category is the reason for nonrationality of Sarkisov's  conic bundle, which we  conjecture  has no gaps in the spectra of its derived category --- see Table~\ref{tab:CGBT}.

\begin{table}[h]
  \begin{center}
    \begin{tabular}[t]{|c|c|c|}
\hline
\begin{minipage}[c]{0.6\nanowidth}
\centering
\medskip

Conic bundle.

\medskip

\end{minipage}
&
\begin{minipage}[c]{0.8\nanowidth}
\centering
\medskip

%\begin{center}

Hodge diamond.

%%\end{center}

\medskip

\end{minipage}
&
\begin{minipage}[c]{1.4\nanowidth}
\centering
\medskip

Phantom category.

\medskip

\end{minipage}
\\
\hline \hline
\begin{minipage}[c]{0.6\nanowidth}
\medskip

\begin{center}

%Conic bundle $X$ over $\PP^2$ with degeneration curve $C$,
%$p_a(C)=1$

Conic bundle $X$ %over $\PP^2$
with degeneration curve $C$,
$p_a(C)=1$.

\end{center}

\medskip

\end{minipage}
&
\begin{minipage}[c]{0.8\nanowidth}
\medskip

\begin{center}

$
\begin{array}{ccccccc}
  &   &   & 1  &   &   &   \\
  &   & 0 &    & 0 &   &   \\
  & 0 &   & 55 &   & 0 &   \\
0 &   & 0 &    & 0 &   & 0 \\
  & 0 &   & 55 &   & 0 &   \\
  &   & 0 &    & 0 &   &   \\
  &   &   & 1  &   &   &
\end{array}
$

\end{center}

\medskip

\end{minipage}
&
\begin{minipage}[c]{1.4\nanowidth}
\medskip

\begin{center}

$\includegraphics[width=5cm]{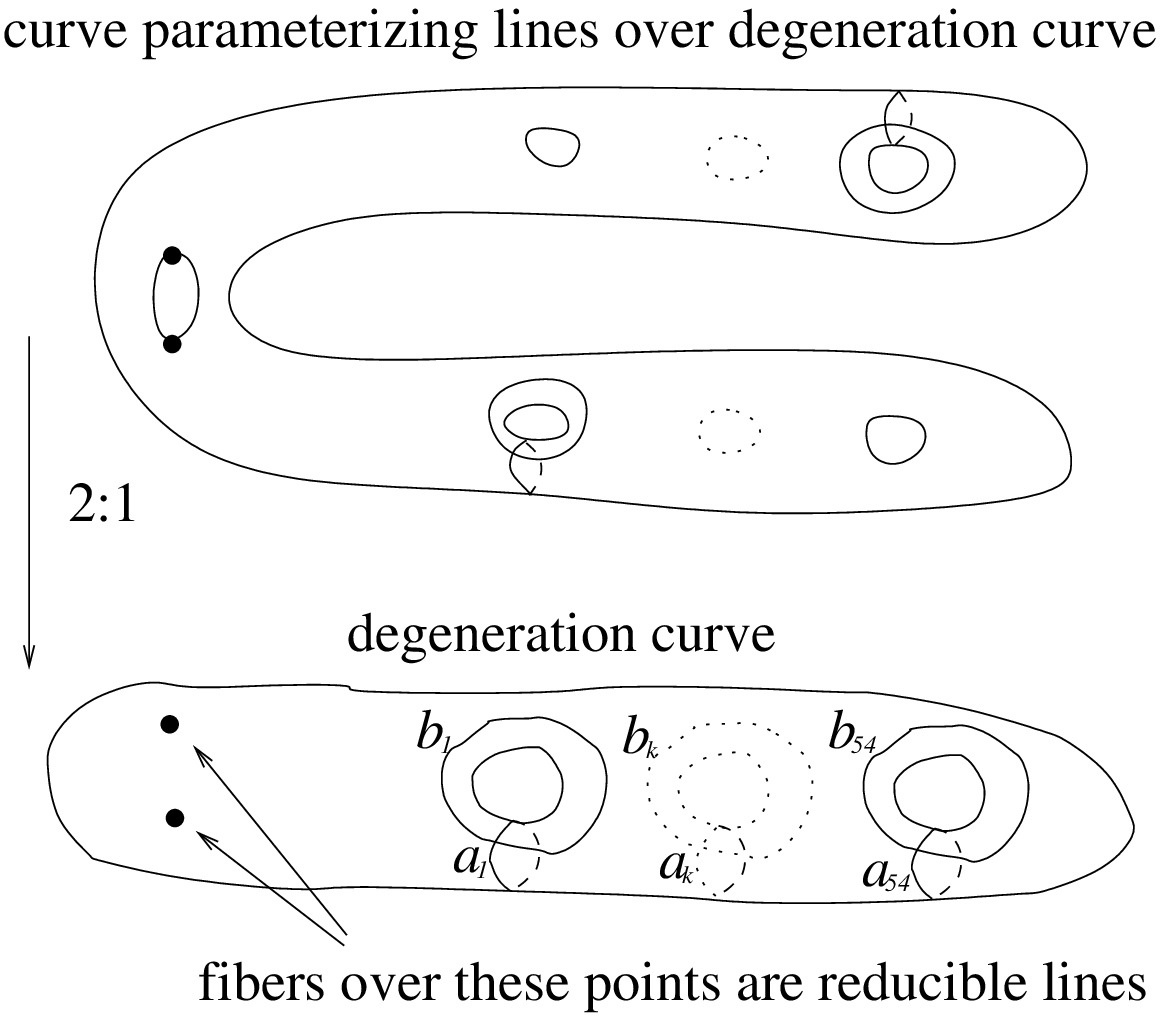}$

$FS(X)/ \langle a_1,\ldots,a_{54}, b_1,\ldots,b_{54} \rangle$ $= \mathcal A\neq 0$, $K^0(\mathcal A)=0$.

\end{center}

\medskip

\end{minipage}
\\ \hline
    \end{tabular}
    \caption{Sarkisov example.}
    \label{tab:CGBT}
  \end{center}
\end{table}

So conjecturally we have
%For the example from Table~\ref{tab:CGBT} we have:
$$
D^b(X)=\langle \mathcal A, E_1, \ldots, E_{112} \rangle,\ \ \
K^0(\mathcal A)=0,\ \ \
\mathcal A\neq 0.
$$

The degeneration construction above suggest an ample opportunity of constructing phantom categories.

\begin{conj}
The SOD of  derived category of  degeneration of  a generic quadric bundles over  a surface contains a phantom category.

\end{conj}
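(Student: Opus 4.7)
The plan is to combine Kuznetsov's semiorthogonal decomposition for quadric bundles with the Landau--Ginzburg moving-scheme technology developed in the preceding subsections, and to treat the statement as a surface-base generalisation of the Sarkisov example of Table~\ref{tab:CGBT}. First, for a flat quadric bundle $\pi\colon X\to S$ of relative dimension $n$ over a surface $S$, I would invoke Kuznetsov's decomposition
$$D^b(X)=\langle \A_X,\pi^*D^b(S)\otimes\cO_{X/S}(1),\ldots,\pi^*D^b(S)\otimes\cO_{X/S}(n-1)\rangle,$$
where $\A_X=D^b(S,\mathcal{C}l_0)$ is the derived category of sheaves of modules over the even Clifford algebra associated to the quadratic form. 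The outer components are pulled back from $S$ and contribute no phantom, so any phantom must be hidden inside $\A_X$; this reduces the conjecture to producing a nontrivial subcategory $\mathcal{P}\subset\A_X$ with $K^0(\mathcal{P})=0$.

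Second, I would analyse the behaviour of $\A_X$ under degeneration. On the B-side, degenerating a smooth quadric bundle to a generic singular one forces new components into the discriminant divisor on $S$; on the A-side, following~\cite{AAK} and the picture behind Table~\ref{tab:CGBT}, this is realised as a conifold transition of the Landau--Ginzburg mirror that enlarges the moving scheme at infinity. Using the toric stack $\mathcal{X}_{\mathrm{Sec}(A)}$ and the fixed-point analysis of \cite{DKK}, the enlarged moving scheme produces a distinguished semiorthogonal summand $\mathcal{P}\subset\A_X$ corresponding to the new vertices of the monotone path polytope of $\mathrm{Sec}(A)$ that are created by the transition.

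Third, I would compute $K^0(\mathcal{P})$. For a generic such degeneration one can arrange that $H^{\mathrm{odd}}(X,\ZZ)$ is torsion-free — in contrast to the Artin--Mumford situation — which kills the standard non-phantom obstructions living in Brauer/torsion classes of $\mathcal{C}l_0$. After peeling off the exceptional objects and the $\pi^*D^b(S)$-pieces, the Chern character on the residual summand $\mathcal{P}$ lands in a numerical lattice that, by a rank count parallel to the $54+54+\text{exc.}$ count of Table~\ref{tab:CGBT}, is zero; this yields $K^0(\mathcal{P})=0$.

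The hard step, and the only truly new one, is showing $\mathcal{P}\neq 0$. Triviality of $K^0$ is cheap; nontriviality of the category is precisely the delicate content of a phantom, and by design none of the classical Hodge-theoretic or K-theoretic invariants see it. I would attack this through Hochschild homology computed on the mirror side: the local differentials at the singularities of the moving scheme — the local invariants introduced earlier in Section~\ref{section:discussion} — should contribute nonzero classes to $HH_*(\mathcal{P})$ whenever the moving scheme carries nontrivial monodromy, and this can be checked combinatorially on the monotone path polytope of $\mathrm{Sec}(A)$ for the toric model of the degenerate quadric bundle. The main obstacle is verifying that this monodromy is genuinely nontrivial for a \emph{generic} quadric bundle degeneration over a \emph{surface}, rather than just over a curve as in the Sarkisov case; this is where the two-dimensionality of the base produces essentially new combinatorics of the secondary polytope, and where full details will have to wait for the forthcoming paper announced in the introduction.
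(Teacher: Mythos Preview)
The statement you are attempting to prove is a \emph{conjecture} in the paper, not a theorem: the paper offers no proof whatsoever, and indeed the surrounding text (``As a result we conjecture nonrationality\ldots'', ``full details will appear in a future paper'') makes clear that the authors regard this as an open problem motivated by the Sarkisov conic-bundle example rather than an established result. There is therefore no ``paper's own proof'' against which to compare your proposal; what you have written is a research outline for attacking an open conjecture, not a reconstruction of an existing argument.

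As an outline, your reduction via Kuznetsov's quadric-bundle decomposition to the even-Clifford piece $\A_X$ is the natural first move, and your identification of the nontriviality of $\mathcal{P}$ as the crux is correct. However, your proposed mechanism for detecting nontriviality---nonzero classes in $HH_*(\mathcal{P})$ coming from monodromy of the moving scheme---is in tension with what phantoms are now understood to be: genuine phantoms (as opposed to the paper's quasi-phantoms) are expected to have \emph{both} $K^0$ and Hochschild homology trivial, so $HH_*$ is precisely the wrong invariant to hope will see $\mathcal{P}$. You also tacitly assume that the conifold-transition picture and the secondary-polytope combinatorics of \cite{DKK} apply to the mirror of a quadric bundle over a surface, but those results are stated for toric hypersurface Landau--Ginzburg models, and it is not clear the degenerate quadric bundle in question admits such a description. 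These are not minor technicalities; they are exactly the obstacles that keep the statement conjectural.
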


As a result we conjecture nonrationality of such quadric bundles. An interesting question is where this phantom categories come from.
The  analysis of Sarkisov's  example  suggests the following. We start with
a conic bundle  over $\PP^2$  with a curve of degeneration $C$. Such a conic bundle has nontrivial gap in the spectra of its  derived category.
Via  degeneration we reduce this gap in the same way as via degeneration we get rid of the intermediate Jacobian. The degeneration of the two sheeted covering of $\PP^2$ produces a surface with a phantom category in its SOD.
This observation provides us with many possibilities to construct geometric examples of phantom categories. After all many of the classical examples of surfaces of general type are obtained from rational surfaces by taking double coverings, quotioning by group actions, degenerations, and smoothings.

So the  existence of nontrivial categories in SOD %(semiorthogonal decomposition)
with trivial Hochschild  homology
can be conjecturally seen in the case of classical surfaces of general type, Campedelli, Godoux (see \cite{BBS}), Burniat (see \cite{AO}), Dolgachev  surfaces, product of curves (\cite{GS12}), and also in the categories of quotients of product of curves and fake $\PP^2$. These surfaces are not rational since they have non-trivial fundamental groups but also since they have conjecturally a quasi-phantom subcategory in their SOD. We call a category a \emph{quasi-phantom} if it is a nontrivial category
with a trivial Hochschild  homology. On the Landau--Ginzburg side these quasi-phantoms are described by the moving scheme.
he deformation of Landau--Ginzburg models  is determined by the moving
scheme so the  (quasi-)phantoms factor in the geometry of the
Landau--Ginzburg  models.

On the mirror side this translates to the fact the Exts between the
(quasi-) phantom category and the rest of the SOD determines the moduli
space.

Finding phantoms --- nontrivial categories with trivial $K^0$ groups ---
is
a quantum leap more difficult than finding   quasi-phantoms.  We
conjecture
that derived categories of  Barlow surface (see \cite{DKK2})  and rational
blow-downs  contain phantoms in their SOD.

%The deformation of Landau--Ginzburg models  is determined by the moving
%scheme so the  quasi-phantoms factor in the geometry of the
%Landau--Ginzburg  models.
%
%On the mirror side this translates to the fact the Exts between the
%quasi-phantom category and the rest of the SOD determines the moduli space.
%
%Finding phantoms --- nontrivial categories with trivial $K^0$ groups --- is
%a quantum leap more difficult than finding   quasi-phantoms.  We  conjecture
%that derived categories of  Barlow surface (see \cite{DKK2})  and rational
%blow-downs  contain phantoms in their SOD.
%Similarly  deformations  of   Landau--Ginzburg models are determined by
%a moving
%scheme so the phantoms factor in  geometry of  Landau--Ginzburg models.
%So on  the mirror side this translates to the fact that  Exts between
%the phantom and the rest of  SOD determines the moduli space.

Applying described above quadric bundles construction one conjecturally can produce many examples of phantom categories. There are two main parallels we build our
quadric bundles construction on:

\begin{enumerate}
\item Degenerations of Hodge structures applied to intermediate Jacobians.
This construction goes back to Clemens and Griffiths and later to Alexeev.
They degenerate intermediate Jacobians to Prym varieties or completely to algebraic tori. The important information to remember are the data of degeneration.
For us the algebraic tori is analogous to the phantom and degeneration data to the gap in the spectrum. The data to analyze is how the phantom  fits in SOD.
This determines the gap and the geometry of the Landau--Ginzburg moduli space.
It is directly connected with the geometry of the moving scheme and the monodromy of LG models.

\item The Candelas idea to study rigid Calabi--Yau by including them in Fano e.g. 4-, 7-, 10-dimensional cubics. This gives him the freedom  to deform.
Similarly by including the phantom in the quadric bundles we get the opportunity to deform and degenerate. The reach SOD of the quadric bundle allows us to study the phantom. This of course is a manifestation of the geometry of the moving scheme and the monodromy at infinity of the moduli space of LG models.
\end{enumerate}

 We turn to  the A side and pose the following question.

\begin{ques} Do A side phantoms provide examples of nonsymplectomorphic symplectic manifolds with the same Gromov--Witten invariants?
\end{ques}

\begin{oss} As an initial application of the above conic bundle to classical Horikawa surfaces (see \cite{HOR})  seem to suggest that after deformation we get a phantom in Fukaya category for one of them and not for another one. It would be interesting to see if Hodge type of the argument would lead to the fact that Fukaya categories in these two types of Horikawa surfaces have different gaps of spectra of their Fukaya categories and as a result are not symplectomorphic. It will be analogous to degenerating Hodge structures to non-isomorphic ones for the benefit of geometric consequences.
\end{oss}

In what follows we move to finding quantitative statements for  gaps and phantoms. We have already emphasized the importance of quadric bundles.
In what follows we concentrate on moduli space of a stability conditions
of local CY obtained as quadric bundles.

We move to a second type of invariants we have mentioned. We take the point of view from \cite{HKK} that for special type of Fukaya and Fukaya wrapped categories locally stability conditions are described by differentials with coefficients irrational   or exponential functions. The main idea in \cite{HKK} is that for such categories we can tilt the $t$-structure in a way that the heart  of it becomes an Artinian category. Such a simple $t$-structure allow description of stability condition in terms of geometry of Lefschetz theory and as a result in terms of the moduli space of Landau--Ginzburg models.

As it is suggested in \cite{BFK} there is a connection between monodromy of Landau--Ginzburg models and the gaps of a spectrum. We record our observations in  Tables~\ref{tab:LGS} and~\ref{tab:LOC}.

\begin{enumerate}
  \item  In the case of $A_n$ category the stability conditions are just exponential differentials as we have demonstrated in Example~\ref{example:An}.
  In this case the simple objects for the $t$-structures are given by the intervals connecting singular points of the function given by
  the central charge. %--- see  Table~\ref{tab:LGS}.
  \item  Similarly for  one-dimensional Fukaya %and Fukaya--Seidel
  wrapped categories  the simple objects for the $t$-structures are given by
  the intervals connecting zero sets of the differentials. %--- see  Table~\ref{tab:LGS}).
This procedure allows us to take categories with quivers.
  \item  For more complicated Fukaya--Seidel categories obtained as a superposition of one-dimensional  Fukaya %--Seidel categories
      wrapped --- see \cite{GKP}
  --- we describe the stability conditions by intertwining the stability conditions for   one-dimensional  Fukaya wrapped %--Seidel categories
  ---
  look at the last line of   Table~\ref{tab:LOC}. At the end we obtain a number $d/k$ where
$d$ is the degree of some of the polynomials $p(z)$ involved in the formula and $k$ is the root we take out of it. %--- see  Table~\ref{tab:LOC}.
Such a number can be associated with %Fukaya or
Fukaya--Seidel category associated with a local Calabi--Yau manifold obtained as a quadric bundle.
\end{enumerate}

\begin{table}[h]
  \begin{center}
    \begin{tabular}[t]{|c|c|c|}
\hline
\begin{minipage}[c]{0.6\nanowidth}
\centering
\medskip

Category.

\medskip

\end{minipage}
&
\begin{minipage}[c]{0.8\nanowidth}
\medskip

\begin{center}
Imaginary part of a central charge.
\end{center}

\medskip

\end{minipage}
&
\begin{minipage}[c]{\nanowidth}
\medskip

\begin{center}

Landau--Ginzburg models and hearts of a $t$-structures.

\end{center}

\medskip

\end{minipage}
\\
\hline \hline
\begin{minipage}[c]{0.6\nanowidth}
\medskip

\begin{center}

$A_n$

\end{center}

\medskip

\end{minipage}
&
\begin{minipage}[c]{0.8\nanowidth}
\medskip

\begin{center}

$Im \int e^{p(z)}dz$

\end{center}

\medskip

\end{minipage}
&
\begin{minipage}[c]{\nanowidth}
\medskip

\begin{center}

$\includegraphics[width=3.5cm]{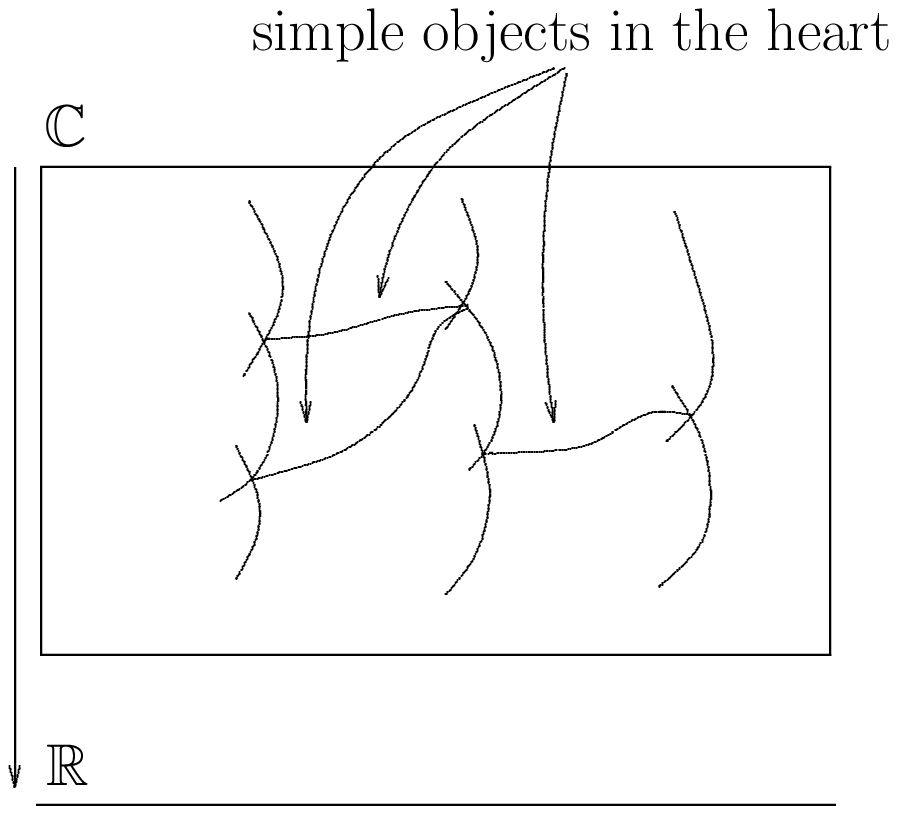}$

\end{center}

\medskip

\end{minipage}
\\ \hline
\begin{minipage}[c]{0.6\nanowidth}
\medskip

\begin{center}

1-dimensional Fukaya wrapped category or
Fukaya category of $d$-dimensional Calabi--Yau category.

\end{center}

\medskip

\end{minipage}
&
\begin{minipage}[c]{0.8\nanowidth}
\medskip

\begin{center}

$Im \int q(z)dz$, where $q(z)$ is a quadratic differential with all zeros of multiplicities $d-2$.

\end{center}

\medskip

\end{minipage}
&
\begin{minipage}[c]{\nanowidth}
\medskip

\begin{center}

$\includegraphics[width=3.5cm]{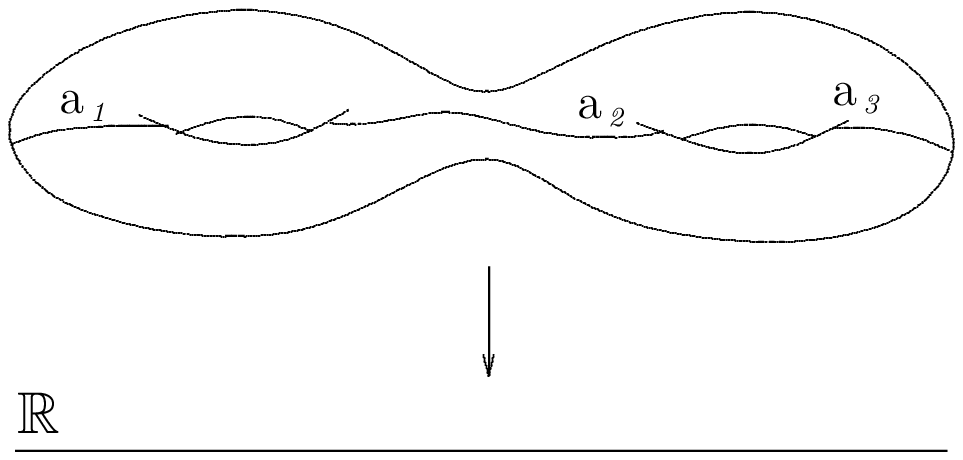}$

$a_i$'s are simple objects.

\end{center}

\medskip

\end{minipage}
\\ \hline
\begin{minipage}[c]{0.6\nanowidth}
\medskip

\begin{center}

$D^b(\PP^n)$

\end{center}

\medskip

\end{minipage}
&
\begin{minipage}[c]{0.8\nanowidth}
\medskip

\begin{center}

$Im z\colon K^0(D^b(\PP^n))\to \CC$

\end{center}

\medskip

\end{minipage}
&
\begin{minipage}[c]{\nanowidth}
\medskip

\begin{center}

The $\PP^n$ quiver.

\end{center}

\medskip

\end{minipage}
\\ \hline
\begin{minipage}[c]{0.6\nanowidth}
\medskip

\begin{center}

$D^b(X)$, $X$ is a Fano variety.

\end{center}

\medskip

\end{minipage}
&
\begin{minipage}[c]{0.8\nanowidth}
\medskip

\begin{center}

Moduli space of Landau--Ginzburg models.

\end{center}

\medskip

\end{minipage}
&
\begin{minipage}[c]{\nanowidth}
\medskip

\begin{center}

Intertwining Landau--Ginzburg models and hearts via monodromy  --- creating gaps in spectra.
\end{center}

\medskip

\end{minipage}
\\ \hline
    \end{tabular}
    \caption{Landau--Ginzburg models and stability conditions.}
    \label{tab:LGS}
  \end{center}
\end{table}

\begin{table}[h]
  \begin{center}
    \begin{tabular}[t]{|c|c|}
\hline
\begin{minipage}[c]{1.5\nanowidth}
\centering
\medskip

%Stability conditions of
Fukaya categories. %and Fukaya--Seidel categories.%, moduli of Landau--Ginzburg models.

\medskip

\end{minipage}
&
\begin{minipage}[c]{1.3\nanowidth}
\medskip

\begin{center}

%Singularities of Landau--Ginzburg models.
Stability conditions.

\end{center}

\medskip

\end{minipage}
\\
\hline \hline
\begin{minipage}[c]{1.5\nanowidth}
\medskip

\begin{center}

%$Stab(FS(F))$,
$\Fuk(F)$, where
$F$ is a local Calabi--Yau variety $\{y^2+r(x)=\alpha_1^2+\ldots+\alpha_{d-2}^2\}$,
where $p$ is a polynomial of degree $n+1$.

\end{center}

\medskip

\end{minipage}
&
\begin{minipage}[c]{1.3\nanowidth}
\medskip

\begin{center}

%Quadratic differentials with  on an orbicurve, locally $z^ddz$.
$q(z)$, where $q$ is a quadratic differential with roots of multiplicities $d$.

\end{center}

\medskip

\end{minipage}
\\ \hline
\begin{minipage}[c]{1.5\nanowidth}
\medskip

\begin{center}

%$Stab(FS)$,
%limiting stability conditions.

$\Fuk_{wrapped}(C)(d)$, where $C$ is a Riemann surface with punctures.

\end{center}

\medskip

\end{minipage}
&
\begin{minipage}[c]{1.3\nanowidth}
\medskip

\begin{center}

%Quadratic differentials with poles.
$q(z)e^{p(z)}dz$, where $q$ is a quadratic differential with roots of multiplicities $d$ and $p$ is a polynomial
of degree $2g(C)+1$.

\end{center}

\medskip

\end{minipage}
\\ \hline
\begin{minipage}[c]{1.5\nanowidth}
\medskip

\begin{center}

%$Stab(FS(X))$,
%$\Fuk (X)$, where
%$X$ is a $k$-fractional Calabi--Yau manifold.

$\left(\Fuk (F_1)\times \ldots\times\Fuk(F_m)\right.$ $\times \Fuk_{wrapped}(C_1)\times \ldots$
$\left.\times \Fuk_{wrapped}(C_n)\right)/\ZZ_k$

\end{center}

\medskip

\end{minipage}
&
\begin{minipage}[c]{1.3\nanowidth}
\medskip

\begin{center}

$\sqrt[k]{q_1(z)\cdot\ldots\cdot q_{m+n}(z)}$ $\cdot e^{p_1(z)+\ldots+p_n(z)}dz$.

\end{center}

\medskip

\end{minipage}
\\ \hline
    \end{tabular}
    \caption{Conjectural duality.}
    \label{tab:LOC}
  \end{center}
\end{table}

The following conjecture suggests  local invariants.

\begin{conj} The geometry of the moving set determines the number
$d/k$ and the gap of the spectra of the corresponding category.
\end{conj}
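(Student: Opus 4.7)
The plan is to match the combinatorial data of the moving scheme $\Phi$ inside the fiber $f^{-1}(\infty)$ with the analytic data governing stability conditions on the corresponding Fukaya--Seidel category, and then to propagate this matching through the monodromy-to-spectra correspondence. First, recall that by the theorem of \cite{KKP} recited above, fixing the moving scheme singles out a smooth formal moduli stack of LG models whose tangent complex is the truncated Batalin--Vilkovisky complex $\Lambda^\bullet T_{\overline{Y}}$. I would start by expressing the ramification divisor and pole orders of the stability differential $\sqrt[k]{q_1\cdots q_{m+n}}\,e^{p_1+\cdots+p_n}\,dz$ from Table~\ref{tab:LOC} directly in terms of the multiplicity stratification of $\Phi$. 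The degree $d$ should arise as the total intersection multiplicity of $\Phi$ with a distinguished component of the fiber at infinity, while $k$ should arise as the order of the cyclic stabilizer carried by $\Phi$ under the residual torus action on $\mathcal{X}_{Sec(A)}$ that moves the remaining points.

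Second, to show that this data actually controls stability conditions on the Fukaya--Seidel side, I would interpret the quotient in the last row of Table~\ref{tab:LOC} as a $\ZZ_k$-equivariant Fukaya category on a cyclic cover branched along the zero set of $q_1\cdots q_{m+n}$, and verify that this cover is precisely the one produced by normalizing the generic fiber of the LG model around $\Phi$. The local model produced by the theorem on $\mathcal{X}_{Laf(A)}$ (the first theorem in Section~\ref{section:discussion}) should then identify the local piece of the LG moduli space at the boundary stratum indexed by $\Phi$ with a deformation space of such quadratic differentials, establishing the first half of the conjecture.

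Third, to transfer the data to Orlov spectra, I would invoke the principle of \cite{KKP} that the monodromy of the LG family determines the gap in the spectrum of $D^b(X)$. The monodromy around the divisor along which $\Phi$ degenerates is computable from the ramification data $(d,k)$ by a Picard--Lefschetz calculation: the twistor-like family $\mathcal{S}\to\PP^1$ restricted to a loop around that divisor is governed by the local monodromy of $e^{p(z)/u}\,dz$, whose Stokes data are read off from $d$ and $k$ via the Nevanlinna graphs of Example~\ref{example:An}. Combining with Theorem~\ref{thm:isohypspecbound}, which bounds the spectrum by embedding dimension and Tjurina number of the associated hypersurface singularity at infinity, I expect the gap length to be expressible as an explicit function of $d/k$ and the multiplicity profile of $\Phi$.

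The main obstacle will be the second step: pinning down exactly how the $k$-fold cyclic structure in Table~\ref{tab:LOC} arises from the toric and cluster geometry of the moving scheme, since the duality there is itself only conjectural. For the examples worked out in Section~\ref{section:classical} (the Picard rank one Fano threefolds in Table~\ref{table}), one can test the proposal by computing both $d/k$ from the explicit Laurent polynomials $f_k$ and the gap from Kuznetsov-type semiorthogonal decompositions; Table~\ref{tab:LGF3} already records the expected answers in several cases, including the cubic threefold, the Artin--Mumford double solid, and the cubic fourfold. A uniform match across those examples would strongly support the general conjecture, while a failure would indicate that $d/k$ captures only a coarser invariant than the full moving scheme and that additional stratification data must be added to the statement.
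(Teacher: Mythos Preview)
The statement you are attempting to prove is presented in the paper as a \emph{conjecture}, not as a theorem; the paper offers no proof whatsoever. Immediately after stating it, the authors simply pass to the follow-up question ``Is the number $d/k$ a birational invariant?'' and to further speculation. So there is no ``paper's own proof'' against which to compare your proposal.

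More substantively, your proposal is not a proof but a research outline, and it rests almost entirely on results that are themselves only conjectural in the paper. Your second step invokes the ``conjectural duality'' of Table~\ref{tab:LOC}---the authors explicitly label it as such---so you cannot use it as an established input. Your third step appeals to the principle that the monodromy of the LG family determines the gap of the Orlov spectrum; in the paper this is Conjecture~4.22 (``see [KKP]''), stated as open and only ``partially verified.'' Theorem~\ref{thm:isohypspecbound} gives an upper bound on the spectrum, not a formula for the gap, so it cannot by itself yield the gap as a function of $d/k$. Finally, the identification of $d$ and $k$ with specific intersection multiplicities and cyclic stabilizers on $\Phi$ is asserted rather than derived; the paper gives no mechanism for extracting these numbers from the moving scheme beyond the heuristic discussion preceding the conjecture. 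In short, what you have written is a plausible strategy for attacking an open problem, but it should be framed as such rather than as a proof, and each of the three steps depends on statements the paper itself leaves unproved.
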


This conjecture suggests the following question.

\begin{ques} Is the number $d/k$ a birational invariant?
\end{ques}

In Table~\ref{tab:LOC} we give examples of categories %(local Calabi--Yau's,
%fractional Calabi--Yau's,  Fukaya--Seidel categories, and their limits)
and their stability conditions.
These categories serve as building blocks for more involved categories of Fano manifolds.

In case the question above has  a  positive answer we get a way of comparing the moving schemes of the divisors at infinity. We will also get a way of deciding if the corresponding moduli spaces of Landau--Ginzburg models can be deformed to each other which according to~\cite{DKK} is a way of deciding if we can build a Sarkisov link
between them.

%
%We expect particularly interesting behavior from the numbers $d/k$
%coming from the phantom categories the existence of which was conjectured  earlier.  It is also clear  that the geometry of the moving scheme in general  has a deep  connection with  the geometry of the Fano manifold.
%
%In fact following  \cite{STEP} we associate a complex of singularities with this moving scheme. We finish with  the following question:
%
%\begin{ques} Can we read the  existence of extremal metrics on the Fano manifold  from this complex of singularities?
%\end{ques}
%

It is clear that the numbers   $d/k$ fit well in the landscape of quadric bundles. We expect particularly interesting behavior from the numbers $d/k$
coming from the phantom categories the existence of which was conjectured  earlier.

\begin{ques} Can we read the  existence phantoms and gaps in terms of the
numbers $d/k$?
\end{ques}

It is also clear  that the geometry of the moving scheme in
general  has a deep  connection with  the geometry of the Fano
manifold. In fact following  \cite{STEP} we associate a complex of
singularities with this moving scheme. So it is natural expect
that we can read many geometrical properties of Fano manifolds
from this complex of singularities. For example, it has been
conjectured by S.-T.\,Yau, G.\,Tian, and S.\,Donaldson that some
kind of \emph{stability} of Fano manifolds is a necessary and
sufficient condition for the existence of K\"ahler--Einstein
metrics on them. This conjecture has been verified in
two-dimensional case (see \cite{Ti90}) and in the toric case (see
\cite{WaZhu04}). Moreover, one direction of this conjecture is now
almost proved by Donaldson, who showed that the existence of the
K\"ahler--Einstein metric implies the so-called $K$-semistability
(see \cite{Donaldson2010}, \cite{Donaldson2011a}). Recall that
G.\,Tian (see~\cite{Ti97}) defined the notion of $K$-stability,
arising from certain degenerations of the manifold or, as he
called them, test configurations. Proving Yau--Tian--Donaldson
conjecture is currently a major research programme in Differential
Geometry (see \cite{Donaldson2009}). We finish with  the following
question.

\begin{ques} Can we read the  existence of K\"ahler--Einstein metric on the Fano manifold  from this complex of singularities?
\end{ques}

\end{document}